\theoremstyle{plain}
\newtheorem{theorem}{Theorem}
\newtheorem{proposition}[theorem]{Proposition}
\newtheorem{corollary}[theorem]{Corollary}
\newtheorem{lemma}[theorem]{Lemma}
\numberwithin{equation}{section}
\theoremstyle{definition}
\newtheorem{definition}[theorem]{Definition}
\newtheorem{remark}[theorem]{Remark}
\newcommand{\define}[1]{{\em #1\/}}
\newcommand{\cA}{{\mathcal A}}
\newcommand{\cC}{{\mathcal C}}
\newcommand{\cE}{{\mathcal E}}
\newcommand{\cF}{{\mathcal F}}
\newcommand{\cM}{{\mathcal M}}
\newcommand{\cP}{{\mathcal P}}
\newcommand{\cU}{{\mathcal U}}
\newcommand{\cW}{{\mathcal W}}
\newcommand{\field}[1]{\mathbb{#1}}
\newcommand{\N}{\field{N}}          		% Naturals
\newcommand{\R}{\field{R}}          		% Reals
\newcommand{\Sph}{\field{S}}        		% Circle and spheres
\newcommand{\abs}[1]{\lvert #1 \rvert}
\newcommand{\norm}[1]{\lVert #1 \rVert}
\newcommand{\ang}[1]{\langle #1 \rangle}
\newcommand{\loc}{{\rm loc}}
\DeclareMathOperator\diver{div}
\DeclareMathOperator\card{card}
\DeclareMathOperator\supp{supp}
\DeclareMathOperator\vol{Vol}
\DeclareMathOperator\sect{Sect}
\DeclareMathOperator\ric{Ric}
\DeclareMathOperator\riem{Riem}
\DeclareMathOperator\sgn{sgn}
\DeclareMathOperator\osc{osc}
\DeclareMathOperator*{\esssup}{ess\,sup}
\begin{document}
\title[Asymptotic Dirichlet problem]{Asymptotic Dirichlet problem for $\cA$-harmonic and minimal graph 
equations in Cartan-Hadamard manifolds}
\author[Jean-Baptiste Casteras]{Jean-Baptiste Casteras}
\address{Departement de Mathematique
Universite libre de Bruxelles, CP 214, Boulevard du Triomphe, B-1050 Bruxelles, Belgium.}
\email{jeanbaptiste.casteras@gmail.com}
\author[Ilkka Holopainen]{Ilkka Holopainen}
\address{Department of Mathematics and Statistics, P.O. Box 68, 00014 University of
Helsinki, Finland.}
\email{ilkka.holopainen@helsinki.fi}
\author[Jaime B. Ripoll]{Jaime B. Ripoll}
\address{UFRGS, Instituto de Matem\'atica, Av. Bento Goncalves 9500, 91540-000 Porto
Alegre-RS, Brasil.}
\email{jaime.ripoll@ufrgs.br}
\thanks{J.-B.C. supported by the CNPq (Brazil) project 501559/2012-4; I.H. supported by the Academy of 
Finland, project 252293; J.R. supported by the CNPq (Brazil) project 302955/2011-9}
\subjclass[2000]{58J32, 53C21, 31C45}
\keywords{minimal graph equation, Dirichlet problem, Hadamard manifold}

\begin{abstract}
We study the asymptotic Dirichlet problem for $\cA$-harmonic equations and for the minimal graph equation 
on a Cartan-Hadamard manifold $M$ whose sectional curvatures are bounded from below and above by certain 
functions depending on the distance $r=d(\cdot,o)$ to a fixed point $o\in M$. We are, in particular,  
interested in finding optimal (or close to optimal) curvature upper bounds. In the special case of the 
Laplace-Beltrami equation we are able to solve the asymptotic Dirichlet problem in dimensions $n\ge 3$ if 
radial sectional curvatures satisfy
\[
-\frac{\bigl(\log r(x)\bigr)^{2\bar{\varepsilon}}}{r(x)^2} \leq K \leq -\frac
{1+\varepsilon}{r(x)^{2}\log r(x)}
\]
outside a compact set for some $\varepsilon > \bar{\varepsilon}>0$. 
The upper bound is close to optimal since the nonsolvability is known if
$K\geq -1/\bigl(2r(x)^2 \log r(x)\bigr)$. Our results (in the non-rotationally symmetric case) 
improve on 
the previously known case of the quadratically decaying upper bound.
\end{abstract}
\maketitle
\bibliographystyle{acm}

%%%%%%%%%%%%%%%%%%%%%%%%%%%%%%%%%%%%%%%
%%%%%%%%%%%%%%%%%%%%%%%%%%%%%%%%%%%%%%%%
\section{Introduction}\label{sec_intro}
%%%%%%%%%%%%%%%%%%%%%%%%
In this paper we are interested in the asymptotic Dirichlet problem for $\cA$-harmonic functions and for 
the minimal graph equation on  a Cartan-Hadamard manifold $M$ of dimension $n\ge 2$. We first recall that 
a Cartan-Hadamard manifold is a simply connected, complete Riemannian manifold having nonpositive 
sectional curvature. It is well-known, since the exponential map $\exp_{o}\colon T_{o}M\to M$ is a 
diffeomorphism for every point $o\in M$, that $M$ is diffeomorphic to $\mathbb{R}^{n}$. One can define an 
asymptotic boundary $\partial_\infty M$ of $M$ as the set of all equivalence classes of unit speed 
geodesic rays on $M$ (see Section~\ref{subsec_hada} for more details). The so-called geometric 
compactification 
$\bar{M}$ of $M$ is then given by $\bar{M}=M\cup \partial_\infty M$ equipped with the 
\emph{cone topology\/}. We also notice that $\bar{M}$ is homeomorphic to a closed Euclidean ball; see 
\cite{EO}. The \emph{asymptotic Dirichlet problem\/} on $M$ for some operator ${\mathcal{Q}}$ is then 
the following: Given a continuous function $f$ on $\partial_\infty M$ does there exist a (unique)
function $u\in C( \bar{M})$ such that ${\mathcal{Q}}[u]=0$ on $M$ and $u\vert\partial_\infty M=f$?
We will consider this problem for two kinds of operators: the minimal graph operator (or the mean 
curvature operator) $\cM$ defined by 
\[
\cM [u]=\diver \dfrac{\nabla u}{\sqrt{1+|\nabla u|^2}},
\]
and the $\cA$-harmonic operator (of type $p$)
\begin{equation}\label{Aoper}
\mathcal{Q}[u]= -\diver \cA_x(\nabla u),
\end{equation}
where $\cA\colon TM\to TM$ is subject to certain conditions; for instance
$$\langle{\mathcal{A}}(V),V\rangle\approx\lvert V \rvert^{p},\quad 1<p<\infty,$$
and
${\mathcal{A}}(\lambda V)=\lambda\lvert\lambda\rvert^{p-2}{\mathcal{A}}(V)$
for all $\lambda\in\mathbb{R}\setminus\{0\}$. The $p$-Laplacian is an example of an $\cA$-harmonic 
operator (see Section~\ref{subsec_A-harm}  for the precise definition). We also note that $u$ satisfies 
the minimal graph equation
\begin{equation}\label{Mequ}
\cM [u]=\diver \dfrac{\nabla u}{\sqrt{1+|\nabla u|^2}}=0
\end{equation} 
if and only if $G:=\left\{  \left(  x,u(x)\right)|x\in\Omega\right\}$  is a minimal 
hypersurface in the product space $M\times\mathbb{R}$.

We will now give a brief overview of the results known for the asymptotic Dirichlet problem on 
Cartan-Hadamard manifolds. The first result for this problem in the case of the usual Laplace-Beltrami 
operator was obtained by Choi. In \cite{choi}, he solved the asymptotic Dirichlet 
problem assuming that the sectional curvatures
satisfy $K\leq-a^{2}<0$ and that $M$ satisfies a ``convex conic neighborhood condition", i.e. given 
$x\in \partial_\infty M$, for any $y\in \partial_\infty M$, $y\neq x$, there exist 
$V_x\subset \bar{M}$, a neighborhood of $x$, and $V_y\subset \bar{M}$, a neighborhood of $y$ 
such that $V_x$ and $V_y$ are disjoint open sets of $\bar{M}$ in terms of the cone topology and 
$V_x \cap M$ is convex with $C^2$ boundary. Anderson \cite{andJDG} proved that the convex conic 
neighborhood condition is satisfied for manifolds of pinched sectional 
curvature $-b^{2}\leq K\leq-a^{2}<0$ and therefore he was able to solve the asymptotic Dirichlet problem 
for the Laplace-Beltrami operator (see also \cite{andschoen} for a different approach). We point out that 
the asymptotic Dirichlet problem was solved independently by Sullivan \cite{sullivan} using probabilistic 
arguments. Ancona in a series of papers \cite{ancannals}, \cite{anchyp}, \cite{ancpot}, and 
\cite{ancrevista}, was able to replace the curvature lower bound by a bounded geometry assumption
that each ball up to a fixed radius is $L$-bi-Lipschitz equivalent to an open set in $\mathbb{R}^{n}$ 
for some fixed $L\geq1$; see \cite{ancannals}. 
\iffalse
Finally, we state the following theorem by Hsu where the most general curvature bounds under which the asymptotic Dirichlet problem for the Laplacian was known to be solvable in all dimensions $n\ge 2$ are
given. 
\fi
To the best of our knowledge, the most general curvature bounds so far under which the asymptotic Dirichlet problem for the Laplacian  was known to be solvable in all dimensions $n\ge 2$ are given in the following theorem by Hsu.
Here and throughout the paper $r(x)$ stands for the distance between $x\in M$ and a fixed point 
$o\in M$.

\begin{theorem}
\cite[Theorems 1.1 and 1.2]{Hs}\label{HsuThm1} Let $M$ be a Cartan-Hadamard manifold.
Suppose that:\\
- there exist a positive constant $a$ and a positive and
non-increasing function $h$ with $\int_{0}^{\infty}t h(t)\,dt<\infty$ such
that
\[
-h\bigl(r(x)\bigr)^{2}e^{2ar(x)}\le\ric_{x}\quad\text{and}\quad
K_x\le-a^{2},
\]
or\\
- there exist positive constants $r_{0},\ \alpha>2,$ and
$\beta<\alpha-2$ such that
\[
-r(x)^{2\beta}\le\ric_{x}\quad\text{and}\quad K_{x}\le-\frac
{\alpha(\alpha-1)}{r(x)^{2}}
\]
for all $x\in M$, with $r(x)\ge r_{0}$.
Then the Dirichlet problem at infinity for the Laplacian is solvable.
\end{theorem}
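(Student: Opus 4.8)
The plan is to solve the asymptotic Dirichlet problem by Perron's method, which reduces the theorem to constructing a local barrier at each point of $\partial_\infty M$, plus a routine uniqueness argument. Uniqueness is immediate because $\bar M$ is compact: if $u_1,u_2\in C(\bar M)$ both satisfy $\Delta u=0$ on $M$ with $u_i|\partial_\infty M=f$, then $u_1-u_2$ attains its extrema over $\bar M$, and these are attained at interior points of $M$ unless $u_1\equiv u_2$, contradicting the strong maximum principle (for the operators $\cQ$ and $\cM$ treated later one uses the comparison principle instead, but the scheme is unchanged). For existence, take the Perron solution $u$, the supremum of the subsolutions $v$ with $\limsup_{x\to\zeta}v(x)\le f(\zeta)$ for all $\zeta\in\partial_\infty M$; interior regularity makes $u$ harmonic in $M$, so it remains to prove $\lim_{x\to x_0}u(x)=f(x_0)$ for each $x_0\in\partial_\infty M$. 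By the standard barrier criterion this follows once one produces, for each $x_0$ and each truncated cone neighbourhood $U$ of $x_0$ in the cone topology, a function $w\ge 0$ that is superharmonic on $U\cap M$, tends to $0$ as $x\to x_0$, and is bounded below by a positive constant on $\partial U\cap M$.

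I would build $w$ in geodesic polar coordinates about $o$. Fix $x_0$, let $\gamma$ be the unit speed ray from $o$ to $x_0$, put $r(x)=\dist(o,x)$, and let $\theta(x)\in[0,\pi]$ be the angle at $o$ between $\gamma$ and the geodesic $o\to x$, so $\theta$ depends only on the spherical variable. The curvature bounds enter through comparison theory. The upper bound $K\le-a^2$ (resp.\ $K\le-\alpha(\alpha-1)/r^2$ for $r\ge r_0$) yields, by the Hessian comparison theorem, $\Delta r\ge(n-1)\phi'/\phi$ with $\phi(t)=a^{-1}\sinh(at)$ (resp.\ $\phi(t)\asymp t^\alpha$), and, by Jacobi field comparison, that the geodesic sphere metrics dominate $\phi(r)^2$ times the round metric, whence $|\gra\theta|\le 1/\phi(r)$; one also needs, for $0<\theta<\pi/2$, a one-sided estimate $\Delta\theta\lesssim\cot\theta/\phi(r)^2$. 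The Ricci lower bound yields, by the Laplacian comparison theorem, the matching $\Delta r\le(n-1)\eta'/\eta$, where $\eta''=G\eta$, $\eta(0)=0$, $\eta'(0)=1$ and $(n-1)G$ equals $h(r)^2e^{2ar}$ (resp.\ $r^{2\beta}$), together with the upper control of the sphere geometry that the one-sided bound on $\Delta\theta$ requires. With these in hand I would try
\[
w(x)=\psi\bigl(\theta(x)\bigr)+\varepsilon\,A\bigl(r(x)\bigr),
\]
with $\psi(\theta)=1-\cos\theta$ (so $\psi\ge0$, $\psi$ vanishes to second order at $0$, and $\psi'(\theta)\cot\theta=\cos\theta$ stays bounded along $\gamma$), $A>0$ decreasing with $A(\infty)=0$, and $\varepsilon>0$ small. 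Then $w\ge0$, $w\to0$ as $x\to x_0$ (both $\theta\to0$ and $r\to\infty$), $w\ge\psi(\delta)>0$ on the lateral face $\{\theta=\delta\}$ of $\partial U$, and $w\ge\varepsilon A(R)>0$ on its inner face $\{r=R\}$; since $\langle\gra r,\gra\theta\rangle=0$,
\[
\Delta w=\psi''(\theta)\,|\gra\theta|^2+\psi'(\theta)\,\Delta\theta+\varepsilon\bigl(A''+A'\,\Delta r\bigr),
\]
whose angular part is $\ge0$ but of order $1/\phi(r)^2$, so it suffices to make $A$ a strict radial supersolution with defect $-A''-A'\Delta r\gtrsim 1/\phi(r)^2$ on $\{r\ge R\}$.

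This is where the hypotheses are consumed. The curvature upper bounds give the lower estimate $\Delta r\gtrsim(n-1)a$ (resp.\ $\gtrsim(n-1)\alpha/r$), which turns $A(r)=e^{-br}$ with $0<b<\min\{2a,(n-1)a\}$ (resp.\ $A(r)=r^{-s}$ with $s$ positive and below the threshold fixed by that estimate) into a supersolution with defect of order $e^{-br}$ (resp.\ $r^{-s-2}$); as $1/\phi(r)^2$ is of order $e^{-2ar}$ (resp.\ $r^{-2\alpha}$), the defect dominates provided $b<2a$ (resp.\ $s<2\alpha-2$), so the cone may be taken with $\delta<\pi/2$ and $R$ large. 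The Ricci lower bounds supply the complementary upper estimates — on $\Delta r$ through $\eta$, and on the angular geometry underlying the $\Delta\theta$ bound — and the integrability condition $\int_0^\infty t\,h(t)\,dt<\infty$, respectively the exponent condition $\beta<\alpha-2$, are precisely the thresholds ensuring that these upper estimates are weak enough to be compatible with the supersolution property of $A$ and with $\psi'(\theta)\Delta\theta$ remaining of order $1/\phi(r)^2$ near $\gamma$. Finally, restricting to a large truncated cone $\{r\ge R\}$ and patching $w$ with a constant by means of a solution of the Dirichlet problem on a large geodesic ball reduces everything to this asymptotic regime.

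The step I expect to be the main obstacle is making $\Delta w\le0$ hold uniformly on the \emph{entire} cone $U\cap M$ — for all $r\ge R$ and all $\theta\in[0,\delta]$, in particular along the central ray $\gamma$ where the angular comparison estimates degenerate — rather than merely in the limit $r\to\infty$, where everything is cleanest; this forces one to establish the one-sided bound on $\Delta\theta$ in the first place, to match the second-order vanishing of $\psi$ at $\theta=0$ against its $\cot\theta/\phi(r)^2$ behaviour, and to pin down the exact exponents so that the two competitions above close with honest constants — which is exactly the point at which the sharpness of the curvature upper bounds, and the precise integrability/exponent conditions on the lower bounds, are used. The remaining ingredients — interior Harnack and regularity, the Perron machinery, compactness of $\bar M$, and the maximum/comparison principle — are standard. (For the Laplacian alone there is a purely probabilistic alternative, due to Hsu: show that Brownian motion on $M$ converges almost surely to a point of $\partial_\infty M$ and that its exit distribution depends continuously on the starting point, controlling the radial and angular parts of the process with the same curvature bounds; that route, however, does not reach the nonlinear operators $\cQ$ and $\cM$.)
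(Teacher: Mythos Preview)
The paper does not prove this theorem. It is quoted in the introduction as a known result of Hsu (cited as \cite[Theorems 1.1 and 1.2]{Hs}) to situate the paper's own contributions; no argument for it appears anywhere in the text. So there is no ``paper's own proof'' to compare your proposal against.

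For what it is worth, your barrier/Perron outline is a reasonable PDE strategy in the spirit of Choi and Anderson--Schoen, and you correctly flag the delicate point (the one-sided control of $\Delta\theta$ near the central ray and the matching of exponents). You also note yourself that Hsu's original arguments are probabilistic, via almost sure convergence of Brownian motion to $\partial_\infty M$ and continuity of the exit distribution; that is indeed the route taken in \cite{Hs}, so your sketch is a genuinely different approach from the cited source as well. Since the present paper neither reproduces Hsu's probabilistic proof nor supplies a PDE proof of this particular statement, there is nothing further to compare.
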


The asymptotic Dirichlet problem has been studied for more general operators than the Laplacian. The first 
result in this direction has been obtained in \cite{Ho} for the $p$-Laplacian under a pinched negative 
sectional curvature assumption by modifying the direct approach of Anderson and Schoen \cite{andschoen}. 
In \cite{HoVa} Holopainen and V\"{a}h\"{a}kangas have been able to relax the assumption on the curvature 
(see Theorem \ref{HVkor2_RT} for a more precise statement of these curvature assumptions). Of particular 
interest is the case of the minimal graph operator. In \cite{CR}, Collin and Rosenberg were able to 
construct harmonic diffeomorphisms from the complex plane $\mathbb{C}$ onto the hyperbolic plane 
$\mathbb{H}^2$ disproving this way a conjecture of Schoen and Yau \cite{ScYau}. This result has been 
generalized by G{\'a}lvez and Rosenberg \cite{GR} to any Hadamard surface $M$ whose curvature is bounded 
from above by a negative constant. 
A fundamental ingredient in their constructions is to solve the Dirichlet problem on unbounded ideal 
polygons with boundary values $\pm\infty$ on the sides of the ideal polygons. These unexpected results 
have raised interest in (entire) minimal hypersurfaces in the product space 
$M\times\R$, where $M$ is a Cartan-Hadamard manifold (see for example, \cite{DHL}, \cite{ER}, \cite{MR}, 
\cite{NR}, \cite{RT}, \cite{RSS}, \cite{Spruck}). 

Very recently in \cite{CHR}, the authors generalized (most of) the solvability results to a larger class 
of operators $\mathcal{Q}$ of the form
\begin{equation}
\label{defopQ}
\mathcal{Q}[u]= \diver (\cP (|\nabla u|^2) \nabla u),
\end{equation}
with $\cP$ subject to the following growth conditions. Let ${\mathcal{P}}\colon (0,\infty)\to[0,\infty)$ 
be a smooth function such that
\begin{equation}
\label{Agrowth}{\mathcal{P}}(t)\le P_{0}t^{(p-2)/2}%
\end{equation}
for all $t>0$, with some constants $P_{0}>0$ and $p\ge1$, and that
${\mathcal{B}}:={\mathcal{P}}^{\prime}/{\mathcal{P}}$ satisfies
\begin{equation}
\label{Bgrowth}-\frac{1}{2t} < {\mathcal{B}}(t)\le\frac{B_{0}}{t}%
\end{equation}
for all $t>0$ with some constant $B_{0}>-1/2$. Furthermore, assume that
$t{\mathcal{P}}(t^{2})\to0$ as $t\to 0+$ and define ${\mathcal{P}%
}(\lvert X \rvert^{2})X=0$ whenever $X$ is a zero vector. 

Following \cite{CHR} we call a relatively compact open set $\Omega\Subset M$ \emph{${\mathcal{Q}}%
$-regular\/} if for any continuous boundary data $h\in C(\partial\Omega)$ there
exists a unique $u\in C(\bar{\Omega})$ which is ${\mathcal{Q}}$-solution in
$\Omega$ and $u|\partial\Omega=h$. In addition to the growth conditions on
${\mathcal{P}}$, assume that

\begin{itemize}
\item[(A)] there is an exhaustion of $M$ by an increasing sequence of
${\mathcal{Q}}$-regular domains $\Omega_{k}$, and that

\item[(B)] locally uniformly bounded sequences of continuous ${\mathcal{Q}}%
$-solutions are compact in relatively compact subsets of $M$.
\end{itemize}
It is well-known that the conditions above are satisfied by the minimal graph operator and the $p$-
Laplacian (see \cite{DLR}, \cite{HKM} and \cite{Spruck}). 

The main theorem in \cite{CHR} is a solvability result for the
asymptotic Dirichlet problem for operators ${\mathcal{Q}}$ that satisfy
\eqref{Agrowth}, \eqref{Bgrowth}, and conditions (A) and (B) under curvature
assumption
\[
-b\bigl(r(x)\bigr)^{2}\leq K(P)\leq-a\bigl(r(x)\bigr)^{2}%
\]
on $M$, where $P\subset T_x M$ is a $2$-plane and 
$a,b\colon \lbrack0,\infty)\to\lbrack0,\infty),\ b\geq a,$ are smooth functions 
satisfying suitable assumptions. Here, instead of giving the precise assumptions on functions $a$ and $b$, 
we state the following two solvability results as special cases of their main theorem (Theorem 1.6 in \cite{CHR}).
\begin{theorem}\cite[Theorem 1.5]{CHR}
\label{CHR_thm1} Let $M$ be a Cartan-Hadamard manifold of dimension $n\ge2$. Fix
$o\in M$ and set $r(\cdot)=d(o,\cdot)$, where $d$ is the Riemannian
distance in $M$. Assume that
\begin{equation*}
-r(x)^{2\left(  \phi-2\right)  -\varepsilon}%
\leq\sect_{x}(P)\leq-\dfrac{\phi(\phi-1)}{r(x)^{2}},
\end{equation*}
for some constants $\phi>1$ and $\varepsilon>0,$ where $\sect_{x}(P)$ is the
sectional curvature of a plane $P\subset T_{x}M$ and $x$ is any point in the
complement of a ball $B(o,R_{0})$. Then the asymptotic Dirichlet problem for
the minimal graph equation \eqref{Mequ} is uniquely solvable for any
boundary data $f\in C\bigl(M(\infty)\bigr)$.
\end{theorem}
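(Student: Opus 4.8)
The plan is to deduce Theorem~\ref{CHR_thm1} as a special case of the general solvability result (Theorem 1.6 in \cite{CHR}) by checking that the functions $a$ and $b$ coming from the curvature pinching
\[
-r(x)^{2(\phi-2)-\varepsilon}\le\sect_x(P)\le-\frac{\phi(\phi-1)}{r(x)^2}
\]
satisfy the hypotheses required there. Concretely, one takes $a(r)=\sqrt{\phi(\phi-1)}\,r^{-1}$ for large $r$ (suitably smoothed and modified near $r=0$ so that $a$ is a smooth nonnegative function on $[0,\infty)$ with $a(0)=0$, which is harmless since the curvature bound is only assumed outside $B(o,R_0)$), and $b(r)=r^{\phi-2-\varepsilon/2}$, again smoothed near the origin. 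Then $b\ge a$ for large $r$, and one must verify the structural conditions on $(a,b)$ imposed in \cite[Theorem 1.6]{CHR}: an integrability/decay condition controlling the solution $f_a$ of the Jacobi-type equation $f_a''=a^2 f_a$, $f_a(0)=0$, $f_a'(0)=1$ (which governs the comparison geometry from below, i.e.\ upper curvature bound), together with a matching condition on $f_b$ coming from the lower curvature bound, and the compatibility relation between $\varepsilon$, $\phi$ and the growth exponent $p$ in \eqref{Agrowth}.

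The key computation is the asymptotic behaviour of $f_a$. With $a(r)\sim\sqrt{\phi(\phi-1)}/r$ one checks directly that $f_a(r)\asymp r^{\phi}$ and $f_a'(r)\asymp r^{\phi-1}$ as $r\to\infty$: the Euler-type ODE $f''=\phi(\phi-1)r^{-2}f$ has $r^{\phi}$ as an exact solution, and a standard ODE comparison argument (monotonicity of solutions in the coefficient, plus the fact that perturbing $a^2$ by lower-order terms perturbs $f_a$ only by lower-order terms) upgrades this to the smoothed $a$. One then substitutes these asymptotics into the hypotheses of the general theorem. The conditions there typically take the form of (i) an integral like $\int^\infty \frac{dr}{f_a(r)}<\infty$ or a faster-decay requirement — here $\int^\infty r^{-\phi}\,dr<\infty$ precisely because $\phi>1$, which is exactly the stated hypothesis; (ii) a requirement that $\bigl(f_a'/f_a\bigr)$, or a related barrier quantity built from $a$, dominates a term built from $b$, $a$, and $p$; the gap $2(\phi-2)-\varepsilon$ versus $2(\phi-2)$ in the exponents is what leaves room to absorb the $\varepsilon$-loss and makes this inequality hold for $r$ large. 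One checks that the function $b$ satisfies whatever (mild) upper growth bound \cite[Theorem 1.6]{CHR} demands — polynomial growth of $b$ is allowed — so no further constraint on $\varepsilon$ beyond $\varepsilon>0$ is needed.

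With $(a,b)$ verified to satisfy the hypotheses, Theorem 1.6 of \cite{CHR} applies and gives a function $u\in C(\bar M)$ with $\cM[u]=0$ in $M$ and $u|\partial_\infty M=f$; recall that the minimal graph operator $\cM$ fits the framework \eqref{defopQ}–\eqref{Bgrowth} with $\cP(t)=(1+t)^{-1/2}$, so $p=1$, $P_0=1$, and $B_0$ can be taken to be any negative constant $>-1/2$ (indeed $\cB(t)=-\tfrac12(1+t)^{-1}\in(-1/2t,0)$), and conditions (A) and (B) hold for $\cM$ by \cite{DLR}, \cite{Spruck}. Uniqueness is part of the conclusion of the general theorem (it follows from the comparison/maximum principle for $\cM$-solutions together with continuity up to $\partial_\infty M$).

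\medskip

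\emph{Main obstacle.} The only genuinely delicate point is the bookkeeping in step (ii): one must confirm that the specific barrier inequalities of \cite[Theorem 1.6]{CHR} — which are stated for general $a,b$ and involve the growth exponent $p$ and possibly an auxiliary parameter — are actually implied, for $r$ sufficiently large, by the crude asymptotics $f_a\asymp r^{\phi}$, $b\asymp r^{\phi-2-\varepsilon/2}$, $a\asymp r^{-1}$. In particular one must make sure that the case $p=1$ (the minimal graph case, which is borderline for \eqref{Agrowth}) does not require a stronger curvature condition than $\phi>1$; if it does, one either restricts to $\phi$ in a suitable range or invokes the sharper form of the general theorem. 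Everything else is routine ODE asymptotics and smoothing near the origin.
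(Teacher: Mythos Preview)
Your approach is exactly what the paper indicates: the paper does not give its own proof of this statement but simply cites it as \cite[Theorem 1.5]{CHR} and explicitly notes that it arises ``as [a] special case of their main theorem (Theorem 1.6 in \cite{CHR})''. So your plan of plugging the power-law choices $a(r)\sim\sqrt{\phi(\phi-1)}/r$ and $b(r)\sim r^{\phi-2-\varepsilon/2}$ into the hypotheses of that general theorem and invoking the Euler-ODE asymptotics $f_a\asymp r^{\phi}$ is precisely the intended reduction; there is nothing further to compare.
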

\begin{theorem}\cite[Corollary 1.7]{CHR}
\label{HVkor2_RT} Let $M$ be a Cartan-Hadamard manifold of dimension $n\ge2$.
Fix $o\in M$ and set $r(\cdot)=d(o,\cdot)$, where $d$ is the Riemannian
distance in $M$. Assume that
\begin{equation}\label{curv_assump_k}
-r(x)^{-2-\varepsilon}e^{2kr(x)}\le\sect_{x}%
(P)\le-k^{2}%
\end{equation}
for some constants $k>0$ and $\varepsilon>0$ and for all $x\in M\setminus
B(o,R_{0})$. Then the asymptotic Dirichlet problem for the operator $\mathcal{Q}$ (defined as in \eqref{defopQ})
is uniquely solvable for any boundary data $f\in C\bigl(M(\infty)\bigr)$.
\end{theorem}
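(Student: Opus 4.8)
The plan is to use the standard Perron-type scheme, reducing everything to the construction of barriers at the points of $M(\infty)$. By hypothesis (A), fix an exhaustion of $M$ by $\mathcal{Q}$-regular domains $\Omega_k$; extend $f\in C\bigl(M(\infty)\bigr)$ to $\tilde f\in C(\bar M)$ (possible since $\bar M$ is a closed ball) and let $u_k\in C(\bar\Omega_k)$ solve $\mathcal{Q}[u_k]=0$ in $\Omega_k$ with $u_k|\partial\Omega_k=\tilde f$. The comparison principle for $\mathcal{Q}$ gives $\sup_{\Omega_k}|u_k|\le\sup_{\bar M}|\tilde f|$, so by (B) and a diagonal argument a subsequence converges locally uniformly in $M$ to a $\mathcal{Q}$-solution $u$. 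Uniqueness will follow from the comparison principle once boundary continuity is known, so it remains to show $\lim_{x\to x_0}u(x)=f(x_0)$ for every $x_0\in M(\infty)$. Fixing $x_0$ and $\eta>0$, it suffices to build, in a truncated cone $C=C(v_0,\delta,R)$ that is a cone-topology neighborhood of $x_0$, shrunk so that $|\tilde f-f(x_0)|\le\eta$ on $C\cap M$, a nonnegative $\mathcal{Q}$-supersolution $w$ with $w(x)\to0$ as $x\to x_0$ along the ray to $x_0$ and $w\ge2\sup_{\bar M}|\tilde f|$ on $\partial C$. Since $\mathcal{Q}$ is odd and unchanged by adding a constant to its argument, $f(x_0)+\eta+w$ and $f(x_0)-\eta-w$ form a super/subsolution pair that, by the comparison principle on $\Omega_k\cap C$, traps $u_k$ and hence $u$; letting $x\to x_0$ and then $\eta\to0$ yields $u(x)\to f(x_0)$.

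The barrier is the heart of the proof. In the cone topology the cones $C(v_0,\delta,R)=\{x\in M:\angle\bigl(v_0,\dot\gamma^x(0)\bigr)<\delta,\ r(x)>R\}$ form a neighborhood basis of $x_0$, where $v_0$ is the initial velocity of the unit ray from $o$ to $x_0$. I would look for $w$ of the form $w=A(r)+\Phi(\varrho)$ (or a product $A(r)\Phi(\varrho)$), with $\varrho$ an angular function comparable to $\angle\bigl(v_0,\dot\gamma^x(0)\bigr)$ (or a convexity-adapted substitute), vanishing along the ray and bounded below on $\{\angle=\delta\}$, and $A>0$ radial with $A(r)\searrow0$ as $r\to\infty$. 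Thanks to the structure conditions \eqref{Agrowth}--\eqref{Bgrowth} on $\mathcal{P}$, the quantity $\mathcal{Q}[w]$ is controlled in magnitude and sign by the corresponding $p$-Laplace-type expression, and its sign is dictated by terms of the shape $A''+(\Delta r)A'-\kappa(r)A$. Here $\Delta r$ is bounded above by $(n-1)f_b'(r)/f_b(r)$ through the Hessian/Rauch comparison with $\sect\ge-b(r)^2$, where $f_b$ solves $f_b''=b^2f_b$, $f_b(0)=0$, $f_b'(0)=1$; and the zeroth-order coefficient $\kappa(r)$ is bounded below by a positive constant because $\sect\le-k^2$ forces the geodesics issuing from $o$ to diverge at least exponentially with rate $k$, which makes $\varrho$ strictly superharmonic in the angular directions and supplies that definite negative coefficient in front of $A$.

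The main obstacle is producing $A$: one is led to super-solve the ODE $A''+(n-1)\dfrac{f_b'(r)}{f_b(r)}A'=\kappa(r)A$ and to exhibit a solution with $A(r)\searrow0$ and, crucially, with $|A'(r)|$ (hence $|\nabla w|$) small at infinity, so that \eqref{Bgrowth} may be applied with the correct sign when estimating $\mathcal{Q}[w]$. This is exactly where the polynomial factor $r^{-2-\varepsilon}$ — equivalently, the hypothesis $\varepsilon>0$ — enters: since $f_b'/f_b$ is comparable to $b(r)=r^{-1-\varepsilon/2}e^{kr}$ while the geodesic spreading forced by $\sect\le-k^2$ is comparable to $e^{kr}$, the relevant convergence condition reads $\int^\infty b(r)e^{-kr}\,dr\asymp\int^\infty r^{-1-\varepsilon/2}\,dr<\infty$, which holds precisely for $\varepsilon>0$. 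One also extends $b$ smoothly and monotonically over $[0,R_0]$ so that $f_b'/f_b$ is harmless near the origin (the curvature hypothesis being assumed only outside $B(o,R_0)$), and verifies the inequalities for $w$ on $\partial C$ that make the comparison step of the first paragraph go through on a (possibly smaller) cone.

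Finally, I note that the statement is in fact the special case $a\equiv k$, $b(r)=r^{-1-\varepsilon/2}e^{kr}$ (extended over $[0,R_0]$ as above) of Theorem 1.6 in \cite{CHR}, whose hypotheses on the pair $(a,b)$ reduce to the integrability bookkeeping described above; the quickest proof simply checks those hypotheses and quotes that theorem. The longer route sketched here is essentially a reproof of it in this case, the barrier estimates being the genuinely substantial part.
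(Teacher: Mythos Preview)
The paper does not give its own proof of this statement: it is quoted in the introduction as \cite[Corollary~1.7]{CHR} and is explicitly presented as one of two ``special cases of their main theorem (Theorem~1.6 in \cite{CHR}).'' Your final paragraph captures this exactly: the quickest argument is to take $a\equiv k$ and $b(r)=r^{-1-\varepsilon/2}e^{kr}$, check the hypotheses of Theorem~1.6 in \cite{CHR}, and quote that theorem. That is precisely the paper's ``proof.''

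Your longer sketch goes beyond what the present paper supplies: it outlines the Perron/exhaustion scheme together with a barrier construction, which is indeed the substance of the argument in \cite{CHR}. The outline is broadly correct in spirit (exhaustion by $\mathcal{Q}$-regular domains, comparison principle, compactness via condition~(B), and then barriers built from a radial decay factor coupled with an angular function controlled through Rauch/Hessian comparison), and you correctly isolate the role of the curvature bounds: the upper bound $K\le -k^2$ forces exponential spreading of geodesics (hence a definite angular superharmonicity), while the lower bound enters through $f_b'/f_b$ and leads to the integrability condition $\int^\infty r^{-1-\varepsilon/2}\,dr<\infty$, explaining the requirement $\varepsilon>0$. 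As a proof sketch of the cited corollary this is reasonable, though the barrier ansatz and the passage from the structure conditions \eqref{Agrowth}--\eqref{Bgrowth} to a sign on $\mathcal{Q}[w]$ would need to be made precise (the actual barriers in \cite{CHR} are not quite of the simple additive form $A(r)+\Phi(\varrho)$). But since the paper itself offers no proof beyond the citation, there is nothing further to compare against here.
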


The Dirichlet problem at infinity for $\cA$-harmonic functions, that is solutions of
\begin{equation}\label{Aequ}
\mathcal{Q}[u]= -\diver \cA_x(\nabla u)=0,
\end{equation}
 has been considered in \cite{Va1} and \cite{Va2}. In 
\cite{Va2},  V\"ah\"akangas was able to generalize the result obtained in \cite{HoVa} (for the $p$-Laplacian) to the 
$\cA$-harmonic case. 
In \cite{Va1}, by generalizing a method due to Cheng~\cite{cheng}, he solved the asymptotic Dirichlet problem for $
\cA$-harmonic equations of type $p$ provided the radial sectional curvatures outside a compact set satisfy 
\[
K(P) \le-\frac{\phi(\phi-1)}{r^{2}(x)}
\]
for some constant $\phi>1$ with $1<p<1+\phi(n-1)$ and 
\[
\lvert K(P) \rvert\le C\lvert K(P^{\prime}) \rvert
\]
for some constant $C$, where $P$ and $P^{\prime}$
are any $2$-dimensional subspaces of $T_{x} M$ containing the (radial) vector
$\nabla r(x)$. It is worth observing that no curvature lower bounds are
needed here. In the recent preprint \cite{CHH1}, Casteras, Heinonen, and Holopainen generalized this result for the minimal graph equation.

The goal of this paper is threefold. First of all, we are looking for an optimal (or at least close 
to optimal) curvature upper bound under which asymptotic Dirichlet problems for equations \eqref{Mequ} 
and \eqref{Aequ} are solvable provided an appropriate curvature lower bound holds. Secondly, we are using 
PDE methods, like Caccioppoli-type inequalities (Lemma~\ref{newlem-mincaccio}), Moser iteration scheme 
(Lemma~\ref{moseritemin}), and Young complementary functions 
to study the minimal graph 
equation. As far as we know such methods are not frequently used in the context of the minimal graph 
equation. Last but not least, we want to publicize the results and methods of the still unpublished 
preprint \cite{Va2} of  V\"ah\"akangas.
Our main results are the following two theorems. Below in 
Theorem~\ref{thmharm}, the operator $\cA$ is of type 
$p\in (1,\infty)$ and $\alpha$ and $\beta$ are so-called structural constants $\cA$; see Section 
\ref{subsec_A-harm} for details.

\begin{theorem}\label{thmharm}
Let $M$ be a Cartan-Hadamard manifold of dimension $n\ge 2$. 
Assume that 
\begin{equation}\label{curv_ass_minim2}
-\frac{\bigl(\log r(x)\bigr)^{2\bar{\varepsilon}}}{r(x)^2} \leq K(P)\leq-\frac
{1+\varepsilon}{r(x)^{2}\log r(x)},
\end{equation}
for some constants $\varepsilon>\bar{\varepsilon}>0,$ where $K(P)$ is the sectional
curvature of any plane $P\subset T_{x}M$ that contains the radial vector $\nabla r(x)$ and $x$ is any point in the complement of a ball $B(o,R_0)$.
Then the asymptotic Dirichlet problem for the $\cA$-harmonic equation \eqref{Aequ} is
uniquely solvable for any boundary data $f\in C\bigl(\partial_\infty M\bigr)$ provided that 
$1< p< n\alpha/\beta$.
\end{theorem}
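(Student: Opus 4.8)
The plan is to follow the classical two-step pattern for the asymptotic Dirichlet problem on Cartan-Hadamard manifolds (cf.\ \cite{choi}, \cite{andschoen}, \cite{HoVa}, \cite{Va1}, \cite{Va2}, \cite{CHR}): first produce an $\cA$-harmonic function $u$ on $M$ that is controlled by $f$, then show that it extends continuously to $\partial_\infty M$ with the prescribed boundary values. For the first step one uses that the $\cA$-harmonic operator satisfies conditions (A) and (B) (see \cite{HKM}): fix a continuous extension $\tilde f\in C(\bar M)$ of $f$, solve the Dirichlet problem on the members $\Omega_k$ of a $\cQ$-regular exhaustion with boundary data $\tilde f|\partial\Omega_k$, bound the solutions $u_k$ by $\|\tilde f\|_\infty$ by the comparison principle, and extract via (B) a locally uniformly convergent subsequence $u_k\to u$; the limit is $\cA$-harmonic in $M$ with $\|u\|_\infty\le\|\tilde f\|_\infty$. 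Uniqueness for the asymptotic problem and independence of $u$ of all the choices then follow from the maximum principle for bounded $\cA$-sub/superharmonic functions, once boundary continuity is known. Thus the whole theorem reduces to the usual local barrier statement: for each $x_0\in\partial_\infty M$ and each $\eta>0$ one must produce a cone-topology neighbourhood $U\ni x_0$ in $\bar M$ and a function $w\in C(\bar U)$ that is a supersolution of \eqref{Aequ} in $U\cap M$, with $w\ge\|\tilde f\|_\infty$ on $\partial U\cap M$ and $w(x_0)=f(x_0)+\eta$, together with the symmetric subsolution; comparison on $(U\cap\Omega_k)\setminus\bar B(o,R_0)$, followed by $k\to\infty$ and $\eta\to0$, then yields $u(x_0)=f(x_0)$.

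The barrier is assembled from the two-sided Hessian comparison supplied by \eqref{curv_ass_minim2}. One first records that the upper bound $K(P)\le-(1+\varepsilon)/\bigl(r(x)^2\log r(x)\bigr)$ on radial planes gives, by the Hessian comparison theorem,
\[
\Hess r\ \ge\ \frac{f_a'(r)}{f_a(r)}\bigl(\langle\cdot,\cdot\rangle-dr\otimes dr\bigr),\qquad \Delta r\ \ge\ (n-1)\frac{f_a'(r)}{f_a(r)}\quad\text{on }M\setminus B(o,R_0),
\]
where $f_a$ solves $f_a''=\bigl((1+\varepsilon)/(r^2\log r)\bigr)f_a$ with positive data at $R_0$; a routine analysis of the associated Riccati equation gives $f_a(r)\asymp r(\log r)^{1+\varepsilon}$, so that $rf_a'(r)/f_a(r)\to1$ and $M$ is only ``logarithmically more than Euclidean'' in this direction. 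Dually, the lower bound $K(P)\ge-(\log r)^{2\bar\varepsilon}/r^2$ yields $\Hess r\le(f_b'/f_b)\bigl(\langle\cdot,\cdot\rangle-dr\otimes dr\bigr)$ and $\Delta r\le(n-1)f_b'/f_b$ with $f_b''=\bigl((\log r)^{2\bar\varepsilon}/r^2\bigr)f_b$, where $f_b$ grows more slowly than any positive power of $r$ but fast enough to control, via Rauch's comparison theorem, the intrinsic ``width'' of the solid cones about the geodesic ray $\gamma_{x_0}$ from $o$ to $x_0$. The hypothesis $\varepsilon>\bar\varepsilon$ is what will make these two estimates cooperate.

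Then one takes $w=f(x_0)+\eta+C_1\Theta+C_2\psi(r)$, where $\Theta$ is an angular function vanishing along $\gamma_{x_0}$ and equal to $1$ outside a cone $C(\gamma_{x_0},\delta)$ (for instance a smoothing of $\min\{1,\delta^{-1}\angle_o(x,x_0)\}$, or a comparable function of the Gromov product $\gp{x}{x_0}$), and $\psi$ is a positive decreasing radial function chosen so that the negative contribution of $C_2\psi$ to $\cQ$ dominates the positive contribution of $C_1\Theta$. Using $\nabla\psi=\psi'\nabla r$, the homogeneity $\cA_x(\lambda V)=\lambda|\lambda|^{p-2}\cA_x(V)$, the structure conditions on $\cA$ (in particular $\alpha|V|^p\le\langle\cA_x(V),V\rangle$ and $|\cA_x(V)|\le\beta|V|^{p-1}$), and the Laplacian comparison estimates, the supersolution inequality $\cQ[C_1\Theta+C_2\psi]\le0$ on $M\setminus B(o,R_0)$ reduces to a single scalar ordinary differential inequality for $\psi$ together with the convergence of a definite integral built from $f_a$, $f_b$ and the ratio $\alpha/\beta$; since $f_a(r)\asymp r(\log r)^{1+\varepsilon}$, this holds exactly in the range $1\le p<n\alpha/\beta$ --- the one place where the restriction on $p$ is used (for the $p$-Laplacian it reads $1\le p<n$). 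One then fixes $C_2$ so large that $C_2\psi\ge\|\tilde f\|_\infty$ on $\partial B(o,R_0)$, $C_1$ so large that $f(x_0)+\eta+C_1\ge\|\tilde f\|_\infty$, and $\delta$ so small that $\{\Theta<1\}$ lies in a prescribed neighbourhood of $x_0$, takes $U$ to be (the part outside $\bar B(o,R_0)$ of) that neighbourhood, and checks that $w$ is a genuine supersolution of \eqref{Aequ} there: since the radial term dominates, $\nabla w$ and $\cA_x(\nabla w)$ stay comparable to their radial parts and the angular contribution is absorbed, while $w(x_0)=f(x_0)+\eta$ because $\Theta(x)\to0$ and $\psi(r)\to0$ along $\gamma_{x_0}$.

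I expect the main obstacle to be the borderline nature of the curvature upper bound: $-(1+\varepsilon)/(r^2\log r)$ differs from the critical rate $-c/r^2$ only by a logarithmic factor, so $f_a$ is only barely superlinear, $\psi$ decays only logarithmically, and every estimate --- the convergence of the integral defining $\psi$, the scalar differential inequality, and the domination of the angular error by the radial one --- must be carried out keeping track of powers of $\log r$ rather than of $r$. Balancing the radial decay (governed by $\varepsilon$) against the angular spreading of the cones (governed, through $f_b$, by $\bar\varepsilon$) is precisely what forces both $\varepsilon>\bar\varepsilon$ and the stated range of $p$; this is also the point at which one could instead simply verify that \eqref{curv_ass_minim2} fits the hypotheses of the general solvability theorem of V\"ah\"akangas \cite{Va2} and quote it. A secondary, routine nuisance is that $w$ must be shown to be a supersolution of the nonlinear operator $\cQ$ and not merely of a linearization, which is handled by the standard device of arranging $|\nabla(C_2\psi)|\gg|\nabla(C_1\Theta)|$ on the region in question.
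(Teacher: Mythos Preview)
Your outline follows the classical pointwise barrier scheme, but the paper proves Theorem~\ref{thmharm} by a genuinely different route: it never constructs an explicit supersolution of the form $C_1\Theta+C_2\psi(r)$. Instead, working inside a truncated cone $\Omega=T(v_0,\delta,r_1)\cap M$, it takes the angular function $\theta(x)=\min\bigl(1,\max(r_1+1-r(x),\delta^{-1}\sphericalangle_o(x_0,x))\bigr)$ as boundary data, solves $-\diver\cA(\nabla u_j)=0$ on $\Omega_j=\Omega\cap B(o,j)$, and controls the limit $u$ through an integral estimate
\[
\int_U\varphi\bigl(|u-\theta|/c_0\bigr)^p\bigl(\log(1+r)+\cC(r)\bigr)
\ \le\ c_0+c_0\int_U F\!\left(\frac{c_0|\nabla\theta|\,r\log(1+r)}{\log(1+r)+\cC(r)}\right)\bigl(\log(1+r)+\cC(r)\bigr)
\]
(Lemma~\ref{mainlemma}). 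The ingredients are a Caccioppoli inequality (Lemma~\ref{caccio}), a weighted Poincar\'e inequality coming from $\Delta r\ge(n-1)\bigl(1/r+(1+\tilde\varepsilon)/(r\log r)\bigr)$, the Young pair $F(\cdot^{1/p})$, $G(\cdot^{1/p})^p$, and the relation $G\circ\varphi'=\varphi$. A Moser iteration (Lemma~\ref{moserite}) then converts the integral bound into a pointwise one, and finiteness of the right-hand side is checked via the Rauch bounds on $f_a,f_b$; the restriction $p<n\alpha/\beta$ enters when one absorbs the Caccioppoli term $\int\varphi(h)^p|\nabla w|^p$ back into the left side. The resulting $u$ (rather than an a~priori barrier) is then used to build the $\cA$-superharmonic $w=\min(1,2u)$ that caps $\overline{H}_f$.

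Regarding your proposed approach, two points are worth flagging. First, for general $\cA\in\cA^p(M)$ satisfying only the structure conditions \eqref{aharmstruc} (no smoothness, no radial symmetry), the ``standard device'' $|\nabla(C_2\psi)|\gg|\nabla(C_1\Theta)|$ does not by itself yield that $C_1\Theta+C_2\psi$ is an $\cA$-supersolution: there is no Taylor expansion of $\cA_x$ available, so the cross terms cannot be treated as perturbations in the way your sketch suggests. This is precisely one reason the paper (following \cite{Va2}) works with integral rather than pointwise inequalities---the Caccioppoli estimate uses only the structure constants $\alpha,\beta$. Second, under the borderline upper bound $K\le-(1+\varepsilon)/(r^2\log r)$ the comparison solution satisfies only $f_a(r)\asymp r(\log r)^{1+\tilde\varepsilon}$ and $\Delta r$ exceeds the Euclidean value $(n-1)/r$ merely by a term of order $1/(r\log r)$; the existing barrier arguments in \cite{Ho}, \cite{HoVa}, \cite{Va1}, \cite{CHR} all assume at least $K\le-\phi(\phi-1)/r^2$ with $\phi>1$, and your assertion that the scalar ODI for $\psi$ ``holds exactly in the range $1\le p<n\alpha/\beta$'' is not verified and is not obvious at this level of decay. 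The paper's Young-function machinery is designed exactly to absorb these logarithmic losses, which is what makes the present curvature hypothesis tractable.
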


\begin{theorem}\label{thmmin}
Let $M$ be a Cartan-Hadamard manifold of dimension $n\ge 3$ satisfying the curvature assumption
\eqref{curv_ass_minim2} for all $2$-planes $P\subset T_{x}M$, with $x\in M\setminus B(o,R_0)$. 
Then the asymptotic Dirichlet problem for the minimal graph equation \eqref{Mequ} is
uniquely solvable for any boundary data $f\in C\bigl(\partial_\infty M\bigr)$.
\end{theorem}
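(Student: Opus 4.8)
The plan is to combine the standard exhaustion construction with a barrier argument at infinity. First, fix a continuous extension $\tilde f\in C(\bar M)$ of $f$ (Tietze; $\bar M$ is a closed ball). Since $\cM$ satisfies (A) and (B), solve $\cM[u_k]=0$ on $B_k=B(o,k)$ with $u_k|\partial B_k=\tilde f|\partial B_k$; by the maximum principle $\inf\tilde f\le u_k\le\sup\tilde f$, and by (B) a subsequence converges in $C_{\mathrm{loc}}(M)$ to a solution $u$ of $\cM[u]=0$ with $\sup_M|u|\le\sup|f|$. The comparison principle on the $B_k$ makes a $C(\bar M)$-solution with data $f$ unique, so it remains to prove that $u$ extends continuously to $\partial_\infty M$ with boundary values $f$, i.e. $\lim_{x\to b}u(x)=f(b)$ for every $b\in\partial_\infty M$. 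After subtracting a constant we may take $f(b)=0$ and must show $\lim_{x\to b}u(x)=0$.

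Fix $\delta>0$. By continuity of $\tilde f$ at $b$ in the cone topology, choose $\vartheta_0>0$ and $R_1>R_0$ so that $|\tilde f|<\delta$ on the truncated cone $C=\{x\in M:\vartheta_b(x)<\vartheta_0,\ r(x)>R_1\}$, where $\vartheta_b(x)$ is the angle at $o$ between the geodesic $[o,x]$ and the ray representing $b$; then also $|u_k|<\delta$ on the cap $C\cap\partial B_k$, while $|u_k|\le\sup|f|$ on the rest of $\partial(C\cap B_k)$. I would now produce a nonnegative supersolution $w$ of $\cM$ on $C$, with $w\ge\sup|f|$ on $\partial C$ (the lateral and bottom parts, all lying in $M$) and $w(x)\to0$ as $x\to b$. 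The ansatz is
\[
w(x)=A\,(\log r(x))^{-\gamma}+\frac{\sup|f|}{\vartheta_0^{2}}\,\vartheta_b(x)^{2},
\]
with $\gamma\in(0,\varepsilon-\bar\varepsilon)$ and $A,R_1$ suitably large. One verifies $\cM[w]\le0$ from the identity
\[
\cM[w]=\frac{1}{\sqrt{1+|\nabla w|^2}}\Bigl(\Delta w-\frac{\Hess w(\nabla w,\nabla w)}{1+|\nabla w|^2}\Bigr),
\]
the Laplacian comparison $\Delta r\ge(n-1)f'/f$ with $f''=\tfrac{1+\varepsilon}{r^{2}\log r}f$ (so $f(r)\asymp r(\log r)^{1+\varepsilon}$), and the Hessian comparison estimates $|\nabla\vartheta_b|\lesssim 1/f(r)$ and $\vartheta_b\,\Delta\vartheta_b\lesssim(n-2)(\log r)^{\bar\varepsilon-1-\varepsilon}/r^{2}$ (the latter drawing on the curvature lower bound through the intrinsic geometry of the geodesic spheres): the radial term contributes $\le-cA\,r^{-2}(\log r)^{-\gamma-1}$ to $\Delta w$ thanks to $n\ge3$, the angular term contributes $O\bigl(r^{-2}(\log r)^{\bar\varepsilon-1-\varepsilon}\bigr)$, and the two are reconciled for $r$ large precisely by $\gamma<\varepsilon-\bar\varepsilon$, while the additional minimal-graph term is lower order since $w$ has small gradient. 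Then $w+\delta$ and $-w-\delta$ sandwich $u_k$ on $\partial(C\cap B_k)$ — on the cap because $|u_k|<\delta\le w+\delta$ and $w\ge0$, elsewhere because $w\ge\sup|f|$ — so the comparison principle gives $|u_k|\le w+\delta$ on $C\cap B_k$; letting $k\to\infty$ and then $x\to b$ yields $\limsup_{x\to b}|u(x)|\le\delta$, and $\delta\to0$ finishes.

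The reason the paper brings in Caccioppoli-type inequalities, Moser iteration, and Young complementary functions is that this cone estimate can be obtained more robustly, and uniformly with the parallel $\cA$-harmonic statement of Theorem~\ref{thmharm}, without an explicit barrier: testing $\cM[u_k]=0$ against cutoffs $\eta^{2}(u_k-c)$ supported in annuli $C\cap(B_{2R}\setminus B_R)$ and controlling the non-homogeneous Lagrangian $t\mapsto\sqrt{1+t^{2}}$ via Young complementary functions yields a Caccioppoli inequality (Lemma~\ref{newlem-mincaccio}) whose geometric weights are summable over a dyadic scale of radii exactly under the hypothesis $K\le-\tfrac{1+\varepsilon}{r^{2}\log r}$; Moser iteration (Lemma~\ref{moseritemin}), which uses a Sobolev inequality and hence forces $n\ge3$, then upgrades the resulting energy decay to $\sup_{C'}|u|\le C\delta$ on a smaller cone at $b$. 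In either route the main obstacle is the same: $\cM$ is not uniformly elliptic — its ellipticity degrades like $(1+|\nabla u|^{2})^{-1}$ — so the comparison and iteration arguments must be set up to survive this degeneracy; and the curvature lower bound $-(\log r)^{2\bar\varepsilon}/r^{2}$ must be weak enough to be usable yet strong enough, through the volume growth and Hessian-comparison constants it governs, that the dyadic series converges, the two demands remaining compatible precisely because $\varepsilon>\bar\varepsilon>0$.
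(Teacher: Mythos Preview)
Your proposal has two layers, and both have real gaps. The primary barrier construction $w=A(\log r)^{-\gamma}+B\vartheta_b^{2}$ is not the paper's method, and its key step---the asserted bound $\vartheta_b\,\Delta\vartheta_b\le c(n-2)(\log r)^{\bar\varepsilon-1-\varepsilon}/r^{2}$---is stated without justification. Under the weak upper bound $K\le-(1+\varepsilon)/(r^{2}\log r)$ the Jacobi field $f_a$ grows only like $r(\log r)^{1+\tilde\varepsilon}$, and the Laplacian of the angular function involves both $f_a$ and $f_b$ through Hessian comparison in a combination that does not obviously reduce to what you claim; this is exactly the delicate regime, and asserting the estimate skips the whole difficulty. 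Your use of $n\ge 3$ for the \emph{radial} piece is correct (one gets $\Delta(\log r)^{-\gamma}\le -c\gamma(n-2)(\log r)^{-\gamma-1}/r^{2}$), but a barrier of this type has not been produced.

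Your summary of the paper's actual route is also wrong in two critical places. First, the restriction $n\ge 3$ does \emph{not} come from the Sobolev inequality in the Moser iteration (the embedding $W^{1,1}\hookrightarrow L^{n/(n-1)}$ in \eqref{ineq-sobo} holds for every $n\ge 2$); it arises in the weighted Poincar\'e step, Lemma~\ref{2ndmainlemma}, where one must have $n>\sqrt{8}(1+\delta)^{2}$ to absorb the Caccioppoli contribution---see the Remark following Lemma~\ref{newlem-mincaccio}. Second, and more seriously, you miss the main new obstacle for $\cM$: the Caccioppoli inequality (Lemma~\ref{newlem-mincaccio}) is obtained with the test function $\eta^{2}\Psi(h)W$, where $W=\sqrt{1+|\nabla u|^{2}}$, not with $\eta^{2}(u_k-c)$; this throws off an extra term $\int\eta^{2}(\Psi^{2}/\Psi')(h)\,|\nabla\log W|^{2}$ on the right of \eqref{cacciomin}. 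Controlling it requires the separate decay estimate $|\nabla\log W|=o(1/r)$ of Lemma~\ref{W-decay}, proved via harmonic coordinates and Schauder theory and relying on a priori gradient bounds for the approximating solutions $u_k$. Those bounds are available only when the boundary data has a controlled smooth extension, which is why the paper first solves the problem for \emph{Lipschitz} $f$ using the specific extension $\theta$ of \eqref{def-sm-ext} (Lemma~\ref{lem-sol-lip}), and only afterwards approximates general continuous $f$ by Lipschitz ones---a two-step structure absent from your outline.
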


We notice that the Laplace-Beltrami operator corresponds to the case $p=2$ and $\alpha=\beta=1$, and therefore is 
covered by Theorem~\ref{thmharm} in dimensions $n\ge 3$. Thus we obtain a generalization to higher dimensions of a 
recent result by Neel \cite{Neel}. 
\begin{corollary}\label{coromain}
Let $M$ be a Cartan-Hadamard manifold of dimension $n\ge 3$ satisfying the curvature assumption
\eqref{curv_ass_minim2} for all $2$-planes $P\subset T_{x}M$ that contain the radial vector $\nabla r(x)$,
with $x\in M\setminus B(o,R_0)$.
Then the asymptotic Dirichlet problem for the Laplace-Beltrami equation is uniquely solvable for any boundary 
data $f\in C\bigl(\partial_\infty M\bigr)$.
\end{corollary}
The curvature upper bound \eqref{curv_ass_minim2} appears also in a recent paper \cite{RTgeomdedi} where Ripoll and 
Telichevesky solved the asymptotic Dirichlet problem for the minimal graph equation on rotationally symmetric 
Hadamard surfaces. Notice that dimension $n=2$ is excluded in Theorem~\ref{thmmin}. However, we believe that the result holds also in the 2-dimensional setting.

We point out that our curvature assumptions, in particular the upper bounds, are in a sense optimal. 
Indeed, in \cite{march} March characterized the existence of nonconstant bounded harmonic functions on 
rotationally symmetric Riemannian manifolds $M=(\R^n,g)$, where the Riemannian metric $g$ is given in 
polar coordinates as
\[
ds^2=dr^2+f(r)^2d\theta^2.
\]
He proved that $M$ carries nonconstant bounded harmonic functions if and only if
\[
I(f)=\int_1^\infty\left( f(s)^{n-3}\int_s^\infty f(t)^{1-n}dt\right)ds<\infty.
\]
Letting $c_2=1$ and $c_n=1/2$ for $n\ge 3$ and assuming that radial sectional curvatures 
$K_r=-(f''/f)(r)$ are nonpositive and have upper bound
\[
K_r\le-\frac{c}{r^2\log r}
\]
for some constant $c>c_n$ and for all large $r$ we have $I(f)<\infty$. On the other hand, if
\[
K_r\ge -\frac{c}{r^2\log r}
\]
for $c<c_n$ and for all large $r$, then $I(f)=\infty$. Since all bounded harmonic functions on such $M$ 
are constant, the asymptotic Dirichlet problem for the Laplace-Beltrami operator can not be solvable. 
In general, assume 
that 
\[
K(P_x)\geq-\frac{1}{r(x)^{2}\log r(x)}
\]
for all large $r(x)$ and let us consider first the case of an $\cA$-harmonic operator of type $p\ge n$. 
The standard Bishop-Gromov volume comparison theorem for geodesic balls $B_r=B(o,r)$ gives
\[
\vol(\partial B_r)\leq C (r\log r)^{n-1}
\]
for some constant $C$ and for all $r\ge r_0$ large enough. It follows that
\[
\int_{r_0}^\infty \frac{dr}{(\vol(\partial B_r)^{1/(p-1)}}\ge 
C\int_{r_0}^\infty \frac{dr}{r\log r}=\infty
\]
which implies that $M$ is so-called $p$-parabolic and hence every bounded $\cA$-harmonic function (with 
$\cA$ of type $p$) is constant; see e.g. \cite{holDuke} and \cite{CoHoSa-Co}. 
On the other hand, in \cite{RiSe}  Rigoli and Setti proved the following nonexistence theorem:
\begin{theorem}
Let $M$ be a complete manifold and $u\in C^1(M)$ be a solution of
\[
\diver \frac{\varphi (|\nabla u|)\nabla u}{|\nabla u|}=0,
\]
where $\varphi \in C^1 ((0,\infty))\cap C^0([0,\infty))$ satisfies the following conditions:
\begin{enumerate}
\item $\varphi (0)=0$, 
\item $\varphi (t)>0$, for all $t\geq 0$,
\item $\varphi(t)\leq A t^\delta$, for all $t\geq 0$,
\end{enumerate}
for some positive constants $A$ and $\delta$. Assume that
\[
(\vol(\partial B_r)^{\frac{1}{\delta}})^{-1}\notin L^1(\infty),
\]
then $M$ is $\varphi$-parabolic i.e. $u$ is constant.
\end{theorem}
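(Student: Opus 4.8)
The natural approach is to establish the conclusion through the \emph{$\varphi$-parabolicity} of $M$ in the capacity (variational) sense. Observe that $\diver\bigl(\varphi(|\nabla u|)\nabla u/|\nabla u|\bigr)$ is the Euler--Lagrange operator of the energy $\psi\mapsto\int_M\Phi(|\nabla\psi|)\,dV$, where $\Phi(t)=\int_0^t\varphi(s)\,ds$ is convex (after the routine reduction to $\varphi$ nondecreasing) with conjugate $\Phi^{*}$ obeying the Young-type identity $\Phi^{*}(\varphi(t))=t\varphi(t)-\Phi(t)\le t\varphi(t)$; and $M$ is $\varphi$-parabolic precisely when
\[
\operatorname{cap}_\Phi\bigl(\overline{B}(o,R)\bigr):=\inf\Bigl\{\int_M\Phi(|\nabla\psi|)\,dV:\ \psi\in\Lip_c(M),\ \psi\equiv1\text{ on }\overline{B}(o,R)\Bigr\}=0
\]
for some (equivalently, every) ball; for $\varphi(t)=t^{p-1}$ this is exactly $p$-parabolicity. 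The plan is to prove, in order: (1) the volume hypothesis implies $\operatorname{cap}_\Phi\bigl(\overline{B}(o,R)\bigr)=0$ for every $R$; (2) vanishing of these capacities forces a bounded $\varphi$-harmonic function $u$ to be constant.

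For (1) I would test the capacity with radial functions. Fixing $R$ and $R_1>R$, take $\psi=g\circ r$ with $g\equiv1$ on $[0,R]$, $g$ nonincreasing, and $g\equiv0$ on $[R_1,\infty)$. Since $|\nabla r|=1$ almost everywhere, the co-area formula gives $\int_M\Phi(|\nabla\psi|)\,dV=\int_R^{R_1}\Phi\bigl(|g'(\rho)|\bigr)\vol(\partial B_\rho)\,d\rho$. The one-dimensional minimiser of this expression subject to $\int_R^{R_1}|g'|=1$ satisfies $\varphi\bigl(|g'(\rho)|\bigr)\vol(\partial B_\rho)\equiv\mu$, i.e.\ $|g'(\rho)|=\varphi^{-1}\!\bigl(\mu/\vol(\partial B_\rho)\bigr)$, with $\mu=\mu(R_1)>0$ fixed by the constraint; using $\Phi(h)\le h\varphi(h)$ one gets $\Phi\bigl(|g'(\rho)|\bigr)\vol(\partial B_\rho)\le\mu\,|g'(\rho)|$, so $\operatorname{cap}_\Phi\bigl(\overline{B}(o,R)\bigr)\le\mu(R_1)\int_R^{R_1}|g'|=\mu(R_1)$. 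Finally $\varphi(t)\le At^{\delta}$ gives $\varphi^{-1}(s)\ge(s/A)^{1/\delta}$, so the constraint forces $1\ge(\mu(R_1)/A)^{1/\delta}\int_R^{R_1}\vol(\partial B_\rho)^{-1/\delta}\,d\rho$; since $\int^{\infty}\vol(\partial B_\rho)^{-1/\delta}\,d\rho=\infty$, we conclude $\mu(R_1)\to0$ as $R_1\to\infty$, hence $\operatorname{cap}_\Phi\bigl(\overline{B}(o,R)\bigr)=0$.

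For (2), set $X:=\varphi(|\nabla u|)\nabla u/|\nabla u|$, extended by $0$ on $\{\nabla u=0\}$; then $X$ is continuous and weakly divergence-free, $|X|=\varphi(|\nabla u|)$, and $\langle X,\nabla u\rangle=\varphi(|\nabla u|)|\nabla u|\ge0$. Testing $\diver X=0$ against $\psi u$ with $\psi\in\Lip_c(M)$, and using $|u|\le L$, yields the Caccioppoli-type inequality
\[
\int_M\psi\,\varphi(|\nabla u|)|\nabla u|\,dV\le L\int_M\varphi(|\nabla u|)\,|\nabla\psi|\,dV .
\]
I would then apply Young's inequality for the complementary pair $(\Phi,\Phi^{*})$ to the right-hand side — in the form $\varphi(|\nabla u|)|\nabla\psi|\le\Phi^{*}\bigl(\varepsilon\varphi(|\nabla u|)\bigr)+\Phi\bigl(|\nabla\psi|/\varepsilon\bigr)\le\varepsilon\,|\nabla u|\varphi(|\nabla u|)+\Phi\bigl(|\nabla\psi|/\varepsilon\bigr)$ for $\varepsilon\in(0,1]$, using $\Phi^{*}(\varepsilon s)\le\varepsilon\Phi^{*}(s)$ and $\Phi^{*}(\varphi(t))\le t\varphi(t)$ — insert the cutoffs $\psi=\psi_j$ from (1) chosen so that $\psi_j\equiv1$ on larger and larger balls, $\|\nabla\psi_j\|_\infty\to0$, and $\int_M\Phi(|\nabla\psi_j|)\,dV\to0$, absorb the $\varepsilon$-term on the left after a standard iteration over concentric annuli (needed to control the energy of $u$ on the transition region $\{0<\psi_j<1\}$), and let $j\to\infty$ to kill the capacitary term; this forces $\int_M\varphi(|\nabla u|)|\nabla u|\,dV=0$, i.e.\ $\nabla u\equiv0$.

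The step I expect to be the genuine obstacle is the absorption in (2): since $\Phi^{*}$ is only a general Young function and not a pure power, the parameter $\varepsilon$ and the sequence of cutoffs must be chosen compatibly, and the iteration controlling the energy of $u$ on the transition annuli has to be run with care — this is where the polynomial bound $\varphi(t)\le At^{\delta}$ and the volume growth re-enter. A secondary but real point is that the Liouville conclusion genuinely needs the boundedness of $u$ — the linear function on $\R$ shows that the bare hypothesis $u\in C^{1}(M)$ cannot by itself yield constancy — so for a completely general solution part (1) only gives that $M$ is $\varphi$-parabolic, which is precisely the property invoked in the applications above.
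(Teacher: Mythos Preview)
The paper does not prove this theorem; it is quoted from Rigoli and Setti \cite{RiSe} as a known result, used only in the introduction to argue that the curvature upper bound in Theorem~\ref{thmmin} would be sharp for the minimal graph equation in dimension $n=2$. There is therefore no proof in the paper to compare your proposal against.

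That said, your approach is the standard one and is essentially how such results are established in the literature: prove $\Phi$-parabolicity via vanishing capacity using radial cutoffs and the co-area formula (your part (1)), then deduce a Liouville theorem through a Caccioppoli--Young absorption argument (your part (2)). Part (1) is correct as written. For part (2) you correctly identify the two genuine issues. First, the absorption requires care since $\Phi$ is not a pure power; this is typically handled either by the iteration you indicate or, more directly, by exploiting a $\Delta_2$-type condition on $\Phi$ (which follows here from the polynomial bound $\varphi(t)\le At^{\delta}$). Second, and more importantly, the Liouville conclusion genuinely needs $u$ bounded (or at least of controlled growth): the statement as recorded in the paper asserts that an arbitrary $C^1$ solution is constant, which is false without such a hypothesis, as the identity on $\R$ already shows. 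The paper's phrasing ``$M$ is $\varphi$-parabolic i.e.\ $u$ is constant'' conflates two distinct assertions; in the cited source the hypotheses are stated more carefully. For the paper's intended application --- bounded solutions of the minimal graph equation with continuous boundary data on $\partial_\infty M$ --- the boundedness is available in context, so your caveat is exactly right.
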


Using this theorem, we also see that the curvature upper bound would be sharp for the minimal graph equation in dimension $n=2$. Notice that $\delta=1$ for the minimal graph equation.
We close this introduction with some comments on the necessity of curvature lower bounds. 
Indeed, Ancona's and Borb\'ely's examples (\cite{ancrevista}, \cite{Bor}) show that a (strictly)
negative curvature upper bound alone is not sufficient for the solvability of
the asymptotic Dirichlet problem for the Laplace equation. In \cite{H_ns},
Holopainen generalized Borb\'ely's result to the $p$-Laplace equation, and
very recently, Holopainen and Ripoll \cite{HR_ns} extended these
nonsolvability results to the operator $\mathcal{Q}$ (as defined in \eqref{defopQ}), in particular, to the
minimal graph equation.

The plan of the paper is the following: Section~\ref{sec_preli} is devoted to preliminaries. We recall 
some well-known facts on Cartan-Hadamard manifolds, Jacobi equations, $\cA$-harmonic functions, the minimal 
graph equation, and Young 
functions. In Section~\ref{sec_asdir} we prove Theorem \ref{thmharm}. We adopt the same strategy as the 
one used in \cite{Va2}. It is based on a Moser iteration procedure involving a weighted Poincar\'e 
inequality. 
Finally, in Section~\ref{asdirmin} we prove Theorem \ref{thmmin} 
adapting to the minimal graph equation the method used in Section~\ref{sec_asdir} for $\cA$-harmonic 
functions. In this case since this equation does not satisfy \eqref{aharmstruc}, some extra 
difficulties appear.  
\subsubsection*{Acknowledgement}
We would like to thank Joel Spruck for his help to obtain the decay estimate for $|\nabla\log W|$ in 
Lemma~\ref{W-decay}.

%%%%%%%%%%%%%%%%%%%%%%%%%%%%%%%%
\section{Preliminaries}\label{sec_preli}
%%%%%%%%%%%%%%%%%%%%%
\subsection{Cartan-Hadamard manifolds}\label{subsec_hada}
%%%%%%%%%%%%%%%%%%%%%%%%%
We recall that Cartan-Hadamard manifolds are complete simply connected Riemannian
$n$-manifolds, $n\geq 2$, with nonpositive sectional curvature. Let $M$ be a Cartan-Hadamard manifold, 
$\partial_\infty M$ the sphere at infinity, and $\bar M=M\cup \partial_\infty M$. Recall that the sphere
at infinity is defined as the set of all equivalence classes of unit speed
geodesic rays in $M$; two such rays $\gamma_{1}$ and $\gamma_{2}$ are
equivalent if $\sup_{t\ge0}d\bigl(\gamma_{1}(t),\gamma_{2}(t)\bigr)< \infty$.
For each $x\in M$ and $y\in\bar M\setminus\{x\}$ there exists a unique unit
speed geodesic $\gamma^{x,y}\colon \mathbb{R}\to M$ such that $\gamma^{x,y}_{0}=x$ and 
$\gamma^{x,y}_{t}=y$ for some $t\in(0,\infty]$. If $v\in
T_{x}M\setminus\{0\}$, $\alpha>0$, and $R>0$, we define a cone
\[
C(v,\alpha)=\{y\in\bar M\setminus\{x\}\colon \sphericalangle(v,\dot\gamma^{x,y}%
_{0})<\alpha\}
\]
and a truncated cone
\[
T(v,\alpha,R)=C(v,\alpha)\setminus\bar B(x,R),
\]
where $\sphericalangle(v,\dot\gamma^{x,y}_{0})$ is the angle between vectors
$v$ and $\dot\gamma^{x,y}_{0}$ in $T_{x} M$. All truncated cones and open balls in $M$
form a basis for the cone topology on $\bar M$.

%%%%%%%%%%%%%%%
\subsection{Jacobi equation}\label{subsec_jac}
%%%%%%%%%%%%%%%%
We use the curvature upper bound in order to prove a weighted Poincar\'e inequality and to estimate from above the 
norm of the gradient of an angular function. The curvature lower bound, in turn, is used to estimate the volume form 
from above. All of these estimates will be given in terms of solutions to a $1$-dimensional Jacobi equation. 
If $k\colon [0,\infty)\to[0,\infty)$ is a smooth function, we
denote by $f_{k}\in C^{\infty}\bigl([0,\infty)\bigr)$ the solution to the
initial value problem
\begin{equation}
\label{Jacobi_eq}
\begin{cases} f_k(0)=0, \\ f_k'(0)=1, \\ f_k''=k^2 f_k. \end{cases} 
\end{equation}
It follows that the solution $f_{k}$ is a nonnegative smooth function. Concerning the curvature upper bound
in \eqref{curv_ass_minim2} we have the following estimates:
\begin{proposition}\cite[Prop. 3.4]{choi}
\label{jacest}
Suppose that $f\colon [R_0,\infty) \to \R$, $R_0>0$, is a positive strictly increasing function satisfying the 
equation $f^{\prime \prime}(r)=a^2(r) f(r)$, where 
\[
a^2(r)\geq \frac{1+\varepsilon}{r^2 \log r}, 
\]
for some $\varepsilon>0$ on $[R_0,\infty)$. Then, for any $0<\tilde{\varepsilon}<\varepsilon$, 
there exists $R_1\geq R_0$ such that, for all $r\geq R_1$,
\[
f(r)\geq r (\log r)^{1+\tilde{\varepsilon}},\quad 
\frac{f^\prime (r)}{f(r)}\geq \frac{1}{r}+\frac{1+\tilde{\varepsilon}}{r\log r}.
\]
\end{proposition}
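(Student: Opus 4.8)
The plan is to pass to the logarithmic derivative $w=f'/f$, which is well defined and $C^1$ because $f$ is a positive $C^2$ function, and which satisfies the Riccati equation $w'=f''/f-(f'/f)^2=a^2-w^2$ on $[R_0,\infty)$. Writing $w(r)=\frac1r+\frac{\xi(r)}{r\log r}$ for $r>1$ and substituting into the Riccati equation (then multiplying by $r^2\log r$ and inserting the hypothesis $a^2(r)\ge(1+\varepsilon)/(r^2\log r)$) turns it into the scalar inequality
\begin{equation*}
r\,\xi'(r)\;\ge\;(1+\varepsilon)-\xi(r)-\frac{\xi(r)\bigl(\xi(r)-1\bigr)}{\log r},\qquad r>1.
\end{equation*}
Since $f'/f\ge\frac1r+\frac{1+\tilde\varepsilon}{r\log r}$ is equivalent to $\xi\ge 1+\tilde\varepsilon$, and the growth estimate for $f$ follows from the gradient estimate by integration, the whole proposition reduces to analysing this scalar inequality for large $r$.

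The first step --- the one I expect to be the crux --- is to show that $\xi$ is eventually positive, i.e.\ that $w(r)>1/r$ for all $r$ past some $r^*$. For this I would use $h(r):=r f'(r)-f(r)$, which is \emph{strictly increasing} since $h'(r)=r f''(r)=r\,a^2(r)f(r)>0$. If $h\le 0$ on all of $[R_0,\infty)$, then $w\le 1/r$, so $f$ grows at most linearly: $f(r)\le (f(R_0)/R_0)\,r$. But then, using $f(r)\ge f(R_0)>0$, we get $h'(r)=r\,a^2(r)f(r)\ge(1+\varepsilon)f(R_0)/(r\log r)$, and since $\int^{\infty}\!dr/(r\log r)=\infty$ this forces $h(r)\to+\infty$, contradicting $h\le 0$. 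Hence the increasing function $h$ is eventually positive, giving $\xi=(h\log r)/f>0$ on $[r^*,\infty)$. This is exactly where the borderline curvature decay $\sim 1/(r^2\log r)$ enters: it is precisely the rate making $\int dr/(r\log r)$ divergent, hence forcing superlinear growth of $f$.

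Next comes a trapping argument for the scalar inequality. Fix any $c\in(0,1+\varepsilon)$ and put $\eta=\tfrac12(1+\varepsilon-c)>0$. On the range $0<\xi\le c$ one has $(1+\varepsilon)-\xi\ge 2\eta$ and $\xi(\xi-1)\le\max\{0,c(c-1)\}$, so there is a threshold $R_\eta\ge r^*$ (large enough that $\log R_\eta$ dominates $\max\{0,c(c-1)\}/\eta$) beyond which $r\xi'(r)\ge\eta>0$ whenever $0<\xi(r)\le c$. Two consequences follow: (i) $\xi$ cannot remain $\le c$ for all $r\ge R_\eta$, since $\xi'\ge\eta/r$ there would make $\xi(r)\ge\xi(R_\eta)+\eta\log(r/R_\eta)\to\infty$, so $\xi(r_c)>c$ for some $r_c\ge R_\eta$; and (ii) after $r_c$ the value $c$ can never be reached again, because at a hypothetical first point $\rho$ with $\xi(\rho)=c$ one would have $\xi'(\rho)\le 0$ (the values are $\ge c$ just to the left) and simultaneously $\xi'(\rho)>0$. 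Hence $\xi(r)\ge c$ for all $r\ge r_c$, i.e.\ $f'(r)/f(r)\ge\frac1r+\frac{c}{r\log r}$ for $r\ge r_c$. Taking $c=1+\tilde\varepsilon$ (permissible since $\tilde\varepsilon<\varepsilon$) yields the gradient estimate.

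Finally, for the growth estimate I would apply the previous step with a slightly larger exponent: choose $\tilde\varepsilon_1$ with $\tilde\varepsilon<\tilde\varepsilon_1<\varepsilon$, obtain $f'/f\ge\frac1r+\frac{1+\tilde\varepsilon_1}{r\log r}$ for $r\ge R^*$, and integrate (using $\int dr/(r\log r)=\log\log r$) to get $f(r)\ge B\,r(\log r)^{1+\tilde\varepsilon_1}$ with $B=f(R^*)/\bigl(R^*(\log R^*)^{1+\tilde\varepsilon_1}\bigr)>0$. Since $(\log r)^{\tilde\varepsilon_1-\tilde\varepsilon}\to\infty$, the quantity $B(\log r)^{\tilde\varepsilon_1-\tilde\varepsilon}$ exceeds $1$ for $r$ large, which upgrades the bound to $f(r)\ge r(\log r)^{1+\tilde\varepsilon}$; taking $R_1$ to be the maximum of the finitely many thresholds that appeared completes the proof. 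Apart from the superlinear-growth step of the second paragraph, everything is routine ODE bookkeeping.
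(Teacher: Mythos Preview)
Your proof is correct. The paper does not supply its own argument for this proposition; it simply quotes it from Choi \cite[Prop.~3.4]{choi}, so there is no in-paper proof to compare against. Your approach---passing to the Riccati equation for $w=f'/f$, introducing the auxiliary $\xi(r)=(\log r)\bigl(rf'(r)-f(r)\bigr)/f(r)$, forcing $\xi>0$ eventually via the divergence of $\int dr/(r\log r)$, and then running a trapping argument on the scalar inequality to push $\xi$ above any level $c<1+\varepsilon$---is self-contained and sound. The computation leading to
\[
r\,\xi'(r)\;\ge\;(1+\varepsilon)-\xi(r)-\frac{\xi(r)\bigl(\xi(r)-1\bigr)}{\log r}
\]
is correct, the barrier argument in (ii) is valid (at the first return time $\rho$ one has $\xi'(\rho)\le 0$ from the left while the inequality forces $\xi'(\rho)>0$), and the final integration with the slightly larger exponent $\tilde\varepsilon_1$ to absorb the constant $B$ is the right way to finish. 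One cosmetic point: the hypothesis $a^2(r)\ge(1+\varepsilon)/(r^2\log r)>0$ tacitly requires $R_0>1$, which you use when writing $\xi(r)=h(r)\log r/f(r)>0$; this is harmless but worth noting.
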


%%%%%%%%%%%%%%%%%%
\subsection{$\cA$-harmonic functions and Perron's method}\label{subsec_A-harm}
%%%%%%%%%%%%%%%%%
In this section we define $\cA$-harmonic and $\cA$-superharmonic functions and record their basic properties that 
will be relevant in the sequel. We refer to \cite{HKM} for the proofs and for the nonlinear potential theory 
of $\cA$-harmonic and $\cA$-superharmonic functions. 

Let $\Omega$ be an open subset of a Riemannian manifold $M$. Suppose that for a.e. $x\in \Omega$ 
we are given a continuous map $\cA_{x}\colon T_{x}M\to T_{x}M$ such that the map $x\mapsto\cA_{x}(X_x)$ 
is a measurable vector field whenever $X$ is.
We assume further that there are constants $1<p<\infty$ and $0<\alpha\le\beta<\infty$ such that 
for a.e. $x\in \Omega$, for all $v,w\in T_xM,\ v\ne w$, and for all
$\lambda\in\R\setminus\{0\}$ we have
\begin{equation}\label{aharmstruc}
\begin{split}
\langle\cA_x(v),v\rangle & \ge\alpha|v|^p;\\
|\cA_x(v)|               & \le\beta|v|^{p-1};\\
\langle \cA_x (v) - &\cA_x (w) ,  v-w\rangle >0;\\
\cA_x (\lambda v) &=\lambda |\lambda|^{p-2} \cA_x (v).
\end{split}
\end{equation}
We denote the set of such operators by $\cA^p (\Omega)$ and we say that $\cA$ is of type $p$. 
The constants $\alpha$ and $\beta$ are called the structure constants of $\cA$. 

A function $u\in W^{1,p}_{\text{loc}}(\Omega)$ is called a (weak) solution of the equation
\begin{equation}\label{eq_waharm}
-\diver\cA_{x}(\nabla u)=0
\end{equation}
in $\Omega$ if 
\begin{equation}\label{eq_aharm}
\int_{\Omega}\langle\cA_{x}(\nabla u),\nabla\varphi\rangle=0
\end{equation}
for all $\varphi\in C^{\infty}_{0}(\Omega).$ If $\abs{\nabla u}\in L^p(\Omega)$, it is equivalent to require
\eqref{eq_aharm} for all $\varphi\in W^{1,p}_0(\Omega)$ by approximation.  
Continuous solutions of \eqref{eq_waharm} are called 
\define{$\cA$-harmonic functions} (of type $p$). By the fundamental work of Serrin \cite{serrin}, 
every solution of \eqref{eq_waharm} has a continuous representative. In the special case 
$\cA_{x}(h)=\abs{h}^{p-2}h,$ $\cA$-harmonic functions are called \define{$p$-harmonic} and, in particular, 
if $p=2,$ we obtain the usual harmonic functions. 

A function $u\in W^{1,p}_{\text{loc}}(\Omega)$ is a \define{subsolution} of 
\eqref{eq_waharm} in $\Omega$ if
\[
-\diver\cA_{x}(\nabla u)\le 0
\]
weakly in $\Omega,$ that is
\[
\int_{\Omega}\ang{\cA_{x}(\nabla u),\nabla\varphi}\le 0
\]
for all nonnegative $\varphi\in C^{\infty}_{0}(\Omega).$ A function $v$ is
called \define{supersolution} of \eqref{eq_waharm} if $-v$ is a subsolution.
Finally, a lower semicontinuous function $u\colon \Omega\to (-\infty,+\infty]$ that is not identically 
$+\infty$ in any component of $\Omega$ is called \define{$\cA$-superharmonic} if for every open $D\Subset \Omega$ 
and for every $h\in C(\bar D)$ that is $\cA$-harmonic in $D$, $h\le u$ on $\partial D$ implies $h\le u$ 
in $D$.

A fundamental feature of (sub/super)solutions of \eqref{eq_waharm} is the following well-known 
\define{comparison principle}: If $u\in W^{1,p}(\Omega)$ is a supersolution and $v\in W^{1,p}$ a subsolution 
of \eqref{eq_waharm} in $\Omega$ such that $\max(v-u,0)\in W^{1,p}_0(\Omega)$, then $u\ge v$ a.e. in $\Omega$. The 
existence of $\cA$-harmonic functions is given by the following result. Suppose that $\Omega\Subset M$ is a 
relatively compact (nonempty) open set and that
$\theta\in W^{1,p}(\Omega)$. Then there exists a unique $\cA$-harmonic function $u$ in $\Omega$, with 
$u-\theta\in  W^{1,p}_0(\Omega)$.
 
Given  a function $f \in C(\partial_\infty M)$ the Dirichlet problem at infinity for $\cA$-harmonic 
functions consists in finding a function $u\in C(\bar{M})$ such that $\cA(u)=0$ in $M$ and 
$u|_{\partial_\infty M}= f.$
In order to solve the Dirichlet problem for the $\cA$-harmonic functions, we will use Perron's method. 
Let $\cA \in \cA^p (M)$, with $p\in (1,\infty)$. We begin by recalling the definition of the upper class of a function $f\in \partial_\infty M$.
\begin{definition}
A function $u\colon M\to (-\infty ,\infty]$ belongs to the upper class $\cU_f$ of $f\colon \partial_\infty M\to [-\infty,\infty]$ if
\begin{enumerate}
\item $u$ is $\cA$-superharmonic in $M$,
\item $u$ is bounded from below, and
\item $\displaystyle\liminf_{x\to x_0} u(x) \geq f(x_0)$, for all $x_0\in \partial_\infty M$.
\end{enumerate}
\end{definition} 
The function
$$\overline{H}_f=\inf \{ u\colon u\in \cU_f\}$$
is called the upper Perron solution.
% and $\underline{H}_f=-\overline{H}_{-f}$ the lower Perron solution.
\begin{theorem}
One of the following is true:
\begin{enumerate}
\item $\overline{H}_f$ is $\cA$-harmonic in $M$,
\item $\overline{H}_f\equiv \infty$ in $M$,
\item $\overline{H}_f\equiv -\infty$ in $M$.
\end{enumerate}
\end{theorem}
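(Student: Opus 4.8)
The plan is to follow the classical Perron-method argument for $\cA$-harmonic functions, as carried out in the Euclidean setting in \cite{HKM}, adapting it to the Riemannian manifold $M$. Fix $f\colon \partial_\infty M \to [-\infty,\infty]$ and set $u = \overline{H}_f = \inf\{w : w \in \cU_f\}$. First I would dispose of the degenerate cases: if $\cU_f = \emptyset$ then by the usual convention $u \equiv \infty$, and if $u(x_0) = -\infty$ for some $x_0 \in M$, I would show $u \equiv -\infty$ on $M$. For the latter, note that every $w \in \cU_f$ is $\cA$-superharmonic and bounded below, so the set $\{w = -\infty\}$ is empty for each individual $w$; the issue is only that the infimum can drop to $-\infty$. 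Here I would invoke the Harnack-type convergence/comparison machinery: if $u \not\equiv -\infty$, the goal in the main case will already force $u$ to be $\cA$-harmonic (hence finite and continuous), so the dichotomy between case (1) and case (3) is really ``$u$ locally bounded below away from $-\infty$ somewhere'' versus ``$u \equiv -\infty$''. I would organize the proof so that the main case assumes $u(x) \in \R$ for at least one, hence (by the argument below) for every $x \in M$, and $u \not\equiv \infty$.

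The heart of the argument is to show that if $u$ is finite at one point and not identically $\infty$, then $u$ is $\cA$-harmonic on $M$. I would use the standard two facts about the upper class: (i) $\cU_f$ is closed under taking pointwise minima of finite families, so it is downward directed, and (ii) $\cU_f$ is closed under \emph{$\cA$-harmonic modification (Poisson modification)} in any $\cA$-regular ball $B = B(x_0,R) \Subset M$: given $w \in \cU_f$, replacing $w$ on $B$ by the (continuous) $\cA$-harmonic function with boundary values $w|\partial B$ — extended by $w$ outside $B$ — again yields a member of $\cU_f$ which is $\le w$. The existence of such $\cA$-regular balls follows from the solvability result quoted in Section~\ref{subsec_A-harm} (existence of a unique $\cA$-harmonic function with prescribed $W^{1,p}$ boundary data, together with continuity up to the boundary for balls). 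Now fix $x_0 \in M$ and a ball $B \Subset M$ centered there. Choose a decreasing sequence $w_j \in \cU_f$ with $w_j(x_0) \to u(x_0)$; by replacing $w_j$ with $\min(w_1,\dots,w_j)$ and then with its $\cA$-harmonic modification in $B$, I may assume each $w_j$ is $\cA$-harmonic in $B$, the sequence is decreasing, and still $w_j(x_0) \to u(x_0)$. Since $u(x_0) \in \R$, the $w_j$ are eventually bounded below near $x_0$; the Harnack principle for $\cA$-harmonic functions (a decreasing sequence of $\cA$-harmonic functions converges locally uniformly either to an $\cA$-harmonic function or to $-\infty$, and the latter is excluded at $x_0$) shows $w_j \to v$ locally uniformly in $B$, with $v$ $\cA$-harmonic and $v \ge u$ on $B$, $v(x_0) = u(x_0)$.

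The final step is the identification $v = u$ on $B$, which uses a diagonalization over a countable dense set together with the minimum principle. Pick a countable dense set $\{y_i\} \subset B$. Running the above construction simultaneously (taking at stage $j$ the minimum of the first $j$ terms of sequences approximating $u$ at $y_1,\dots,y_j$, then $\cA$-harmonically modifying in $B$), I obtain a single decreasing sequence $w_j$ of functions $\cA$-harmonic in $B$ with $w_j(y_i) \to u(y_i)$ for every $i$; its limit $v$ is $\cA$-harmonic in $B$, satisfies $v \ge u$ there, and $v(y_i) = u(y_i)$ for all $i$. If $v(z) > u(z)$ for some $z \in B$, then some $w \in \cU_f$ has $w(z) < v(z)$; replacing $w$ by $\min(w, w_j)$ and $\cA$-harmonically modifying in $B$ gives a decreasing sequence of $\cA$-harmonic functions in $B$ below $v$ and equal to $v$ at each $y_i$, whose limit $\tilde v$ is $\cA$-harmonic, $\le v$, with $\tilde v(z) < v(z)$ but $\tilde v = v$ on the dense set $\{y_i\}$ — contradicting the fact that two $\cA$-harmonic functions agreeing on a dense set agree everywhere (by continuity). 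Hence $v = u$ on $B$, so $u$ is $\cA$-harmonic in $B$; since $x_0$ and $B$ were arbitrary, $u$ is $\cA$-harmonic on all of $M$, giving case (1). If instead the construction at some (hence, by the same connectedness/Harnack propagation, every) point produces the limit $-\infty$, we are in case (3); and case (2) is exactly $\cU_f = \emptyset$. I expect the main obstacle to be purely expository rather than mathematical: namely, carefully stating and citing the Harnack convergence property and the continuity-up-to-the-boundary of $\cA$-harmonic functions on balls in the Riemannian setting (all available in \cite{HKM}, \cite{serrin}), and handling the bookkeeping of the simultaneous-modification/diagonalization so that a single decreasing sequence controls a dense set of points at once.
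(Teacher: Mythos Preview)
Your proof sketch is correct and follows the classical Perron argument from \cite{HKM}, which is exactly what the paper relies on: the paper does not give its own proof of this theorem but simply states it as background material, referring the reader to \cite{HKM} for the nonlinear potential theory of $\cA$-harmonic and $\cA$-superharmonic functions. So there is nothing to compare---your writeup is essentially a detailed expansion of the argument the paper defers to.
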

Next we define $\cA$-regular points at infinity.
\begin{definition}
A point $x_0\in \partial_\infty M$ is called $\cA$-regular if
\[
\lim_{ \stackrel{ \mbox{\scriptsize$x\to x_0$} }{x\in M}} \overline{H}_f(x)=f(x_0)
\]
for all $f\in C(\partial_\infty M)$.
\end{definition}
It is easy to see that the Dirichlet problem at infinity for $\cA$-harmonic functions is uniquely solvable
if every point at infinity is $\cA$-regular.

%%%%%%%%%%%
\subsection{Minimal graph equation}\label{subsec-minimal}
%%%%%%%%%%%%
Let $\Omega\subset M$ be an open set. We say that a function $u\in W^{1,1}_{\loc}(\Omega)$ is a 
\define{(weak) solution of the minimal graph equation} \eqref{Mequ} if 
\begin{equation}\label{weak-Mequ}
\int_{\Omega}\frac{\langle\nabla u,\nabla\varphi\rangle}{\sqrt{1+|\nabla u|^2}}=0
\end{equation}
for every $\varphi\in C^{\infty}_{0}(\Omega)$. Note that the integral above is well-defined since
\[
\sqrt{1+|\nabla u|^2}\ge |\nabla u|\quad \text{a.e.}, 
\]
and therefore
\[
\int_{\Omega}\frac{\bigl|\langle\nabla u,\nabla\varphi\rangle\bigr|}{\sqrt{1+|\nabla u|^2}}
\le \int_{\Omega}\frac{|\nabla u| |\nabla\varphi|}{\sqrt{1+|\nabla u|^2}} \le 
\int_{\Omega}|\nabla\varphi|<\infty.
\]
In fact, it is equivalent to require \eqref{weak-Mequ} for every
$\varphi\in \mathring{W}^{1,1}_{0}(\Omega)$. Indeed, let $\varphi\in \mathring{W}^{1,1}_{0}(\Omega)$ and let 
$(\varphi_j)$ be a sequence in $C^{\infty}_0(\Omega)$ such that
$\nabla\varphi_j\to\nabla\varphi$ in $L^1(\Omega)$. Supposing that \eqref{weak-Mequ} holds for all such $\varphi_j$, 
we get
\begin{align*}
\left|\int_{\Omega}\frac{\langle\nabla u,\nabla\varphi\rangle}{\sqrt{1+|\nabla u|^2}}\right| &=
\left|\int_{\Omega}\frac{\langle\nabla u,\nabla\varphi\rangle}{\sqrt{1+|\nabla u|^2}} -
\int_{\Omega}\frac{\langle\nabla u,\nabla\varphi_j\rangle}{\sqrt{1+|\nabla u|^2}}\right|\\
&=\left|\int_{\Omega}\frac{\langle\nabla u,\nabla\varphi-\nabla\varphi_j\rangle}{\sqrt{1+|\nabla u|^2}}\right|\\
&\le \int_{\Omega}\frac{|\nabla u||\nabla\varphi-\nabla\varphi_j|}{\sqrt{1+|\nabla u|^2}}
\le\int_{\Omega}|\nabla\varphi-\nabla\varphi_j|\to 0
\end{align*}
as $j\to 0$.
The following lemma guarantees the existence of (strong) solutions of \eqref{Mequ} with given boundary values.
\begin{lemma}\label{Mequ-sol-exist}
Suppose that $\Omega\Subset M$ is a smooth relatively compact open set whose boundary has nonnegative mean curvature 
with respect to inwards pointing unit normal field. Then for each $f\in C^{2,\alpha}(\bar{\Omega})$ there exists a 
unique $u\in C^{\infty}(\Omega)\cap C^{2,\alpha}(\bar{\Omega})$ that solves the minimal graph equation \eqref{Mequ}
in $\Omega$ with boundary values $u|\partial\Omega=f|\partial\Omega$. 
\end{lemma}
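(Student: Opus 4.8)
\textbf{Proof plan for Lemma~\ref{Mequ-sol-exist}.}

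The plan is to obtain the solution via the classical continuity method (Leray--Schauder), applied to the family of Dirichlet problems $\cM_\sigma[u]=0$ in $\Omega$, $u|\partial\Omega = \sigma f|\partial\Omega$, for $\sigma\in[0,1]$, where $\cM_\sigma$ is the mean curvature operator (or the more standard trick of keeping the boundary data fixed and scaling the equation). The set $S$ of $\sigma$ for which the problem has a solution $u_\sigma\in C^{2,\alpha}(\bar\Omega)$ is nonempty ($0\in S$, giving $u_0\equiv 0$), open in $[0,1]$ by the implicit function theorem (the linearization of $\cM$ is a uniformly elliptic operator with no zeroth-order term, so it is invertible on $C^{2,\alpha}$ with zero boundary data), and the crux is to show $S$ is closed, which by Arzel\`a--Ascoli and Schauder estimates reduces to a priori estimates in $C^{1}(\bar\Omega)$ uniform in $\sigma$.

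The $C^{1}$ a priori estimate splits into three parts. First, the $C^{0}$ bound: by the maximum principle applied to $\cM_\sigma$ (which annihilates constants), $\sup_\Omega|u_\sigma|\le \sup_{\partial\Omega}|f|$. Second, the boundary gradient estimate: here is where the hypothesis that $\partial\Omega$ has nonnegative mean curvature with respect to the inward normal enters. One constructs upper and lower barriers near each boundary point of the form $w = f + \psi(d)$, where $d$ is the distance to $\partial\Omega$ and $\psi$ is a suitable concave increasing function; the mean-curvature (nonnegativity) condition on $\partial\Omega$ is exactly what is needed to make $w$ a supersolution (resp.\ subsolution) of $\cM_\sigma$ in a one-sided neighborhood, after which the comparison principle pins $u_\sigma$ between the barriers and hence bounds $|\nabla u_\sigma|$ on $\partial\Omega$ by a constant depending only on $\Omega$, the geometry of $M$ near $\partial\Omega$, and $\|f\|_{C^2}$. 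This is the standard Jenkins--Serrin/Spruck construction, and I would cite \cite{Spruck} (or \cite{DLR}) for the precise form of the barrier. Third, the interior gradient estimate: this follows from the well-known interior gradient estimate for solutions of the minimal graph equation on manifolds (again \cite{Spruck}, \cite{DLR}), which bounds $|\nabla u_\sigma(x)|$ in terms of $\dist(x,\partial\Omega)$, $\sup|u_\sigma|$, and a lower bound on the Ricci curvature of $M$ on the relevant compact set; combined with the boundary estimate this yields a global bound $\sup_{\bar\Omega}|\nabla u_\sigma|\le C$ independent of $\sigma$.

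With the uniform $C^{1}$ bound in hand, the equation $\cM_\sigma[u_\sigma]=0$ becomes uniformly elliptic with coefficients controlled in $C^{0,\alpha}$ (via De Giorgi--Nash--Moser applied to the differentiated equation, or directly from Ladyzhenskaya--Uraltseva-type interior estimates), so Schauder theory promotes this to a uniform $C^{2,\alpha}(\bar\Omega)$ bound, closing $S$ and giving a solution at $\sigma=1$. Interior smoothness $u\in C^\infty(\Omega)$ then follows from elliptic regularity bootstrap, since the coefficients of $\cM$ are smooth functions of $\nabla u$ away from where the denominator degenerates (and it never does, $\sqrt{1+|\nabla u|^2}\ge 1$). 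Uniqueness is immediate from the comparison principle for the minimal graph equation (Section~\ref{subsec-minimal}): if $u_1,u_2$ both solve \eqref{Mequ} with the same boundary data, then $\max(u_1-u_2,0)$ has zero boundary trace, and applying \eqref{weak-Mequ} with this test function together with the strict monotonicity of $X\mapsto X/\sqrt{1+|X|^2}$ forces $\nabla u_1=\nabla u_2$ a.e., whence $u_1\equiv u_2$. The main obstacle is the boundary gradient estimate: it is the only step that genuinely uses the mean-curvature hypothesis on $\partial\Omega$, and getting the barrier to work requires that hypothesis in exactly the right (one-sided) form, so I would be most careful there; everything else is standard quasilinear elliptic PDE theory that I would invoke by reference rather than reprove.
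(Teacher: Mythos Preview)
Your proposal is correct and follows essentially the same route as the paper's proof: continuity method on the family $u|\partial\Omega=\sigma f$, openness via the implicit function theorem, and closedness from a priori $C^1$ bounds obtained by (i) the comparison principle for the $C^0$ bound, (ii) barriers near $\partial\Omega$ using the nonnegative mean curvature hypothesis for the boundary gradient estimate, and (iii) a global/interior gradient estimate, followed by Schauder theory and regularity bootstrap. The only cosmetic differences are the references invoked (the paper cites \cite{DHL} for the boundary barriers and \cite[Lemma 3.1]{RSS} for the global gradient bound, whereas you point to \cite{Spruck}/\cite{DLR}) and that you spell out the uniqueness argument, which the paper leaves implicit.
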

\begin{proof}
This lemma follows from well-known techniques used in the continuity method of elliptic PDE theory and therefore we 
just sketch the argument. Set
\[
V=\{t\in [0,1]\colon \exists u\in C^{2,\alpha}(\bar{\Omega})\text{ such that $\cM[u]=0$ in $\Omega$ and
 $u|\partial\Omega=tf|\partial\Omega$}\}.
\]
We have $V\neq\emptyset$ since $0\in V.$ Moreover, by the implicit function
theorem, $V$ is open in $[0,1]$. Given $t\in V$, let $u$ be a solution of \eqref{Mequ} such that 
$u|\partial\Omega=tf|\partial\Omega$. Since constant functions are solutions of \eqref{Mequ}, we 
have $\sup_{\bar{\Omega}}|u|\le\max_{\partial\Omega}|f|$ by the comparison
principle (see e.g. \cite[Lemma 1]{CHR}. Also, since $\partial\Omega$ has nonnegative mean curvature with 
respect to  inwards pointing unit normal field, we may use classical logarithmic type barriers to prove that 
$\max_{\partial\Omega}|\nabla u| \leq C$ where $C$ is a constant that depends only on $f$ and on
$\Omega$ (see e.g. \cite[Section 4]{DHL} for details). By \cite[Lemma 3.1]{RSS} we have
$\max_{\bar{\Omega}} |\nabla u | \leq C$ for some constant independent of $u$ and $t$. H\"{o}lder 
estimates and theory of linear elliptic PDEs imply that the $C^{2,\beta}$ norm of $u$ is bounded by a 
constant depending only on $f$ and $\Omega$ for some $0<\beta<1.$ Then, if $t_{n}\in V$
converges to $t\in [ 0,1] $ and $u_{n}$ is a solution of \eqref{Mequ} such
that $u_{n}|\partial\Omega=t_{n}f|\partial\Omega$, then $(u_{n})$ contains a subsequence converging in 
the $C^{2}$ norm on $\bar{\Omega}$ to a solution $u\in C^{2}(\bar{\Omega})$ of \eqref{Mequ} in 
$\Omega$ such that $u|\partial\Omega=tf|\partial\Omega$. Regularity theory implies that
$u\in C^{\infty}(\Omega)\cap C^{2,\alpha}(\bar{\Omega})$. It follows that $t\in V$, so that
$V$ is closed and hence $V=[0,1]$.
\end{proof}

From now on we will mainly consider solutions of \eqref{Mequ} that are at least $C^2$-smooth.

%%%%%%%%%%%%%%%%%%
\subsection{Young functions}\label{subsec_young}
%%%%%%%%%%%%%%%%%%
Let $\phi\colon [0,\infty)\to [0,\infty)$ be a homeomorphism and let $\psi=\phi^{-1}$. Define
\define{Young functions} $\Phi$ and $\Psi$ by setting, for each $t\in [0,\infty)$,
\begin{align*}
\Phi(t)=\int_{0}^t\phi(s)ds\quad
\text{and}\quad
\Psi(t)=\int_{0}^t\psi(s)ds.
\end{align*}
Then we have the \define{Young inequality}
\[
ab\le \Phi(a) +\Psi(b)
\]
for all $a,b\in [0,\infty)$. The functions $\Phi$ and $\Psi$ are said to form a 
\define{complementary Young pair}. Furthermore, $\Phi$ (and similarly $\Psi$) is a continuous, strictly 
increasing, and convex function satisfying
\begin{align*}
\lim_{t\to 0+}\frac{\Phi(t)}{t}=0\quad
\text{and}\quad
\lim_{t\to\infty}\frac{\Phi(t)}{t}=\infty.
\end{align*}
Such Young functions are usually called $N$-functions (nice Young functions) in the literature;
see e.g. \cite{Kuf} for a more general definition of Young functions.

Following \cite{Va2} we consider complementary Young pairs of a special type. Suppose that a 
homeomorphism $G\colon [0,\infty)\to[0,\infty )$ is a Young function that is a diffeomorphism on 
$(0,\infty)$ and satisfies
\begin{equation}
\label{youngA1}
\int_0^1 \dfrac{1}{G^{-1}(t)}dt<\infty,
\end{equation} 
and
\begin{equation}
\label{youngA2}
\lim_{t\to 0} \dfrac{t G^\prime (t)}{G(t)}=1.
\end{equation}
Then $G(\cdot^{1/p})^p$, with $p\geq 1$, is also a Young function and we can define  
$F\colon [0,\infty)\to [0,\infty)$ so that $G(\cdot^{1/p})^p$ and $F(\cdot^{1/p})$ form a complementary 
Young pair. The space of such functions $F$ will be denoted by $\cF_p$. Note that if $F\in\cF_p$, then 
$\lambda F\in\cF_p$ and $F(\lambda\cdot)\in\cF_p$ for every positive constant $\lambda$.
It is proved in \cite{Va2} that $\cF_p$ is non-empty. More precisely, we have the following:
\begin{proposition}\cite[Proposition 4.3]{Va2}
\label{growthyoung}
Fix $\varepsilon_0 \in (0,1)$. There exists $F\in \cF_p$ such that
\begin{equation}\label{eq-F-upper}
F(t)\leq 
t^{p+\varepsilon_0}\exp\left(-\tfrac{1}{t}\left(\log\bigl(e+\tfrac{1}{t}\bigr)\right)^{-1-\varepsilon_0}\right) 
\end{equation}
for all $t\in [0,\infty)$.
\end{proposition}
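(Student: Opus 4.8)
The plan is to reduce \eqref{eq-F-upper} to an explicit Legendre--transform estimate and then to exhibit a suitable $G$ by hand. Recall that, writing $\Phi:=G(\cdot^{1/p})^p$ and $\Psi:=F(\cdot^{1/p})$, the pair $(\Phi,\Psi)$ is by construction a complementary Young pair, so $\Psi$ is the Legendre conjugate of $\Phi$, namely $\Psi(y)=\sup_{x\ge0}\bigl(xy-\Phi(x)\bigr)$ (immediate from the Young inequality $xy\le\Phi(x)+\Psi(y)$ together with its equality case). Substituting $x=u^p$ and $y=\sigma^p$ gives
\[
F(\sigma)=\sup_{u\ge0}\Bigl((u\sigma)^p-G(u)^p\Bigr)=\sup_{u\ge0}u^p\Bigl(\sigma^p-\bigl(G(u)/u\bigr)^p\Bigr),
\]
so everything comes down to producing one Young function $G$ satisfying \eqref{youngA1} and \eqref{youngA2} for which this supremum obeys \eqref{eq-F-upper}.

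The delicate point is the behaviour of $G$ near $0$: condition \eqref{youngA1} forbids $G$ from vanishing linearly at $0$, whereas making $F$ as small as \eqref{eq-F-upper} demands forces $G$ to be almost linear there --- closer to linear than a single extra power of a logarithm would allow --- so the exponent of an \emph{iterated} logarithm has to be threaded strictly between $1$ and $1+\varepsilon_0$. Concretely, I would fix $q\in(1,1+\varepsilon_0)$, say $q=1+\varepsilon_0/2$, fix a small $u_0>0$, and set
\[
G(u)=u\,\ell(u),\qquad \ell(u)=\bigl(\log(1/u)\bigr)^{-1}\bigl(\log\log(1/u)\bigr)^{-q}\qquad(0<u\le u_0),
\]
extending $G$ past $u_0$ to a Young function that is a diffeomorphism on $(0,\infty)$, is convex, has $G(u)/u$ nondecreasing, and satisfies $G(u)\ge u^{1+p/\varepsilon_0}$ for all large $u$; this extension is routine. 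Then $\ell$ is slowly varying, so $uG'(u)/G(u)=1+u\ell'(u)/\ell(u)=1+O\bigl(1/\log(1/u)\bigr)\to1$, which is \eqref{youngA2}; and inverting $u\ell(u)=t$ yields $G^{-1}(t)\asymp t\log(1/t)\bigl(\log\log(1/t)\bigr)^{q}$ as $t\to0$, so $1/G^{-1}$ is integrable near $0$ precisely because $q>1$, which is \eqref{youngA1}.

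Next I would estimate $F$ near $0$. Since $G(u)/u=\ell(u)$ is increasing from $0$ to $\infty$, the bracket $\sigma^p-(G(u)/u)^p$ is positive exactly for $u<u_*(\sigma)$, where $\ell(u_*)=\sigma$; hence $F(\sigma)\le\sigma^p u_*(\sigma)^p$. Solving $\bigl(\log(1/u_*)\bigr)\bigl(\log\log(1/u_*)\bigr)^{q}=1/\sigma$ asymptotically gives $\log(1/u_*)\sim\sigma^{-1}\bigl(\log(1/\sigma)\bigr)^{-q}$, whence, for $\sigma$ small and some $c>0$,
\[
F(\sigma)\le\sigma^p\exp\!\Bigl(-c\,\sigma^{-1}\bigl(\log(e+1/\sigma)\bigr)^{-q}\Bigr).
\]
Because $q<1+\varepsilon_0$, the exponent $c\,\sigma^{-1}(\log(e+1/\sigma))^{-q}$ dominates $\sigma^{-1}(\log(e+1/\sigma))^{-1-\varepsilon_0}$ as $\sigma\to0$ (their ratio being $\sim c\,(\log(1/\sigma))^{1+\varepsilon_0-q}\to\infty$), so the right-hand side above falls below $\sigma^{p+\varepsilon_0}\exp\bigl(-\sigma^{-1}(\log(e+1/\sigma))^{-1-\varepsilon_0}\bigr)$ for $\sigma$ small, the surplus exponential decay absorbing the polynomial gap $\sigma^{-\varepsilon_0}$. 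Thus \eqref{eq-F-upper} holds on some interval $[0,\sigma_1]$.

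It remains to treat $\sigma\ge\sigma_1$ and to upgrade everything to a bound valid for all $t$. For large $\sigma$ the supremum defining $F(\sigma)$ is controlled by the power tail $G(u)\ge u^{1+p/\varepsilon_0}$, which gives $F(\sigma)\le C_1\sigma^{p+\varepsilon_0}$; combined with the continuity and strict positivity on $(0,\infty)$ of both sides of \eqref{eq-F-upper} and the vanishing of their ratio at $0$ just proved, this gives $F(t)\le C\,t^{p+\varepsilon_0}\exp\bigl(-t^{-1}(\log(e+1/t))^{-1-\varepsilon_0}\bigr)$ for all $t\ge0$, with some constant $C$. Finally, since $F(\mu\cdot)\in\cF_p$ for every $\mu>0$, one checks directly that for $\mu$ small enough the right-hand side of \eqref{eq-F-upper} satisfies $C\cdot(\text{its value at }\mu t)\le(\text{its value at }t)$ for all $t$ (the factor $\mu^{p+\varepsilon_0}$ beats $C$, and the ratio of the exponential factors is $\le1$), so $F(\mu\cdot)$ belongs to $\cF_p$ and satisfies \eqref{eq-F-upper}. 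The genuinely delicate step is the choice of the iterated-logarithm exponent $q$ together with the asymptotic inversion of $\ell$ near $0$ in the third paragraph; once those are in hand the rest is bookkeeping.
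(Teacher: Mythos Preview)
Your construction is essentially the one the paper sketches: the paper also fixes an exponent $\lambda\in(1,1+\varepsilon_0)$ (your $q$), prescribes $G'(t)=\bigl(\log\tfrac1t\bigr)^{-1}\bigl(\log\log\tfrac1t\bigr)^{-\lambda}$ for small $t$ and $G'(t)=t^{p/\varepsilon_0}$ for large $t$, and then takes $F(t)=c\tilde F(t^p)$ with a suitable constant $c$. Since that $G'$ is slowly varying at $0$, the paper's $G$ satisfies $G(u)\sim u\bigl(\log\tfrac1u\bigr)^{-1}\bigl(\log\log\tfrac1u\bigr)^{-\lambda}$, which is exactly your $u\,\ell(u)$; the only cosmetic differences are that the paper builds $G$ as an antiderivative (so convexity is automatic from monotonicity of $G'$) and absorbs the final constant multiplicatively in $F$ rather than by rescaling the argument as you do with $F(\mu\,\cdot)$.
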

We omit the details of the proof of Proposition~\ref{growthyoung} and refer to \cite{Va2}; see also \cite{CHH1}.  Here we just 
sketch the construction. The function $F$ is obtained by first choosing $\lambda\in (1,1+\varepsilon_0)$ 
and a homeomorphism $H\colon [0,\infty)\to [0,\infty)$ that is diffeomorphic on $(0,\infty)$ and satisfies
\[
H(t)=\begin{cases}
\left(\log\tfrac 1t\right)^{-1}\left(\log\log\tfrac 1t\right)^{-\lambda} 
&\mbox{if $t$ is small enough};\\
t^{p/\varepsilon_0} &\mbox{if $t$ is large enough},
\end{cases}
\]  
and then setting $G(t)=\int_0^t H(s)ds$. Then $G$ and $\tilde{G},\ \tilde{G}(t)=G(t^{1/p})^p$, are 
Young functions. Let $\tilde{F}$ be the complementary Young function to $\tilde{G}$ and, finally,
define $F$ by setting $F(t)=c\tilde{F}(t^p)$ for a suitable positive constant $c$.

Since $G$ is convex, we have $G(t)\ge ct$ for all $t\ge 1$. Therefore $G^{-1}(t)\le ct$ for 
all $t$ large enough and, consequently, $\int_{0}^{\infty}1/G^{-1}=\infty$. Taking into account 
\eqref{youngA1} we conclude that the function 
$\gamma$, defined by
\[
\gamma(t)=\int_{0}^{t}\frac{1}{G^{-1}(s)}ds,
\]
is a homeomorphism $[0,\infty)\to [0,\infty)$ that is a diffeomorphism on $(0,\infty)$.
Hence the same is true for its inverse 
\begin{equation}\label{eq-defvarphi}
\varphi=\gamma^{-1}\colon [0,\infty)\to [0,\infty).
\end{equation}
We collect the properties of such a function $\varphi$ into the following lemma.
\begin{lemma}\cite[Lemma 4.5]{Va2}
The function $\varphi\colon [0,\infty)\to [0,\infty)$ is a homeomorphism that is smooth on 
$(0,\infty)$ and satisfies
\begin{equation}
\label{relvarphi1}
G \circ \varphi^\prime =\varphi,
\end{equation}
and
\begin{equation}
\label{relvarphi}
\lim_{t\to 0+} \dfrac{\varphi^{\prime \prime}(t) \varphi (t)}{\varphi^\prime (t)^2}=1.
\end{equation}
\end{lemma}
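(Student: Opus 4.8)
The plan is to derive everything directly from the defining relation $\varphi=\gamma^{-1}$, where $\gamma(t)=\int_0^t G^{-1}(s)^{-1}\,ds$. Since it was already established that $\gamma\colon[0,\infty)\to[0,\infty)$ is a homeomorphism which restricts to a diffeomorphism of $(0,\infty)$, and since $\gamma'(t)=G^{-1}(t)^{-1}>0$ for every $t>0$ so that $\gamma$ has no critical point on $(0,\infty)$, the inverse $\varphi$ is automatically a homeomorphism $[0,\infty)\to[0,\infty)$ that is smooth on $(0,\infty)$, with $\varphi(0)=0$ (because $\gamma(0)=0$) and $\varphi$ strictly increasing. To obtain \eqref{relvarphi1} I would differentiate $\varphi=\gamma^{-1}$: for $t>0$,
\[
\varphi'(t)=\frac{1}{\gamma'\bigl(\varphi(t)\bigr)}=G^{-1}\bigl(\varphi(t)\bigr),
\]
and applying the increasing homeomorphism $G$ to both sides yields $G\circ\varphi'=\varphi$ on $(0,\infty)$; both sides vanish at $0$, so the identity holds on $[0,\infty)$.

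For the limit \eqref{relvarphi} I would differentiate the identity $G\bigl(\varphi'(t)\bigr)=\varphi(t)$ once more. This is legitimate on $(0,\infty)$, where $G$ is a diffeomorphism (hence $G'>0$) and $\varphi'=G^{-1}\circ\varphi$ is differentiable, and it gives
\[
G'\bigl(\varphi'(t)\bigr)\,\varphi''(t)=\varphi'(t),\qquad\text{hence}\qquad
\varphi''(t)=\frac{\varphi'(t)}{G'\bigl(\varphi'(t)\bigr)}.
\]
Feeding this together with $\varphi(t)=G\bigl(\varphi'(t)\bigr)$ into the quotient turns it into
\[
\frac{\varphi''(t)\,\varphi(t)}{\varphi'(t)^2}
=\frac{G\bigl(\varphi'(t)\bigr)}{\varphi'(t)\,G'\bigl(\varphi'(t)\bigr)}.
\]
As $t\to0+$ one has $\varphi(t)\to\varphi(0)=0$, whence $\varphi'(t)=G^{-1}\bigl(\varphi(t)\bigr)\to G^{-1}(0)=0$; so, writing $s=\varphi'(t)$, the right-hand side is $G(s)/\bigl(sG'(s)\bigr)$ with $s\to0+$, and \eqref{youngA2} (the reciprocal of the stated limit, which is permissible since that limit equals $1\ne0$) gives the value $1$.

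I do not expect a genuine obstacle: the argument is a two-step differentiation plus a change of variables, and the only points requiring attention are bookkeeping ones — that the quantities one divides by, namely $\gamma'\bigl(\varphi(t)\bigr)$, $\varphi'(t)$ and $G'\bigl(\varphi'(t)\bigr)$, are finite and strictly positive for $t>0$ (which follows from $G$ being an increasing diffeomorphism of $(0,\infty)$ and $\varphi$ being strictly increasing with $\varphi(0)=0$), and that $\varphi'(t)\to0$ as $t\to0+$ so that \eqref{youngA2} applies. Note that \eqref{youngA1} plays no role in this particular lemma; it was used beforehand only to guarantee that $\gamma$, and therefore $\varphi$, is a well-defined surjective homeomorphism.
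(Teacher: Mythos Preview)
Your argument is correct. The paper itself does not supply a proof of this lemma; it is quoted from \cite{Va2} without demonstration, so there is nothing to compare against. Your derivation---differentiating $\varphi=\gamma^{-1}$ to obtain $\varphi'=G^{-1}\circ\varphi$, then differentiating once more and invoking \eqref{youngA2}---is the natural one and all the bookkeeping you flag (positivity of $\gamma'$, $\varphi'$, $G'$ on $(0,\infty)$, and $\varphi'(t)\to 0$ as $t\to 0+$) is handled correctly.
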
 

From now on, $\varphi$ will be the function defined in \eqref{eq-defvarphi} such that the corresponding
$F\in\cF_p$ satisfies \eqref{eq-F-upper}. We define an auxiliary 
function $\psi=(\varphi^\prime)^{p-1}\varphi$. It is easy to see that 
$\psi\colon [0,\infty)\to [0,\infty)$ is a homeomorphism that is smooth on $(0,\infty)$. 
It follows from \eqref{relvarphi} that 
\begin{equation}
\label{relpsi}
\lim_{t\to 0+}\dfrac{\psi^\prime (t)}{\varphi^\prime (t)^p}=p.
\end{equation}
Consequently, for every $\delta>0$, there exists $t_{\delta}>0$ such that 
\begin{equation}\label{(2.18)}
\frac{\psi'(t)}{2p}\le \varphi'(t)^p\le \frac{(1+\delta)^p\psi'(t)}p
\end{equation}
and
\begin{equation}\label{(2.19)}
\frac{\psi(t)^p}{\psi'(t)^{p-1}}\le\frac{(1+\delta)^p\varphi(t)^p}{p^{p-1}}
\end{equation}
whenever $t\in (0,t_{\delta}]$.
%%%%%%%%%%%%%%%%%%
\section{Dirichlet problem at infinity for $\cA$-harmonic functions}\label{sec_asdir}
%%%%%%%%%%%%%%%%%%
This section is devoted to the proof of Theorem~\ref{thmharm}. We assume that $\cA\in\cA_p(M)$, where $1<p<\infty$, and that 
$\alpha$ and $\beta$ are the structural constants of $\cA$ as in \ref{subsec_A-harm}. Throughout the section the function $F\in\cF_p$ satisfies \eqref{eq-F-upper} 
(see Proposition~\ref{growthyoung}) and the function $\varphi$ is related to $G$ and $F$ by \eqref{relvarphi1}
as explained in \ref{subsec_young}. Furthermore, $r$ stands for the distance function $r(x)=d(x,o)$.

We start with stating a Caccioppoli-type inequality that will be crucial in the sequel.   
\begin{lemma}\cite[Lemma 2.15]{Va2}
\label{caccio}
Suppose that $\Psi\colon [0,\infty)\to [0,\infty)$ is a homeomorphism that is smooth on $(0,\infty)$.
Let $U\Subset M$ be an open, relatively compact set and let $\eta \geq 0$ be a 
Lipschitz function in $U$. 
Suppose that $\theta,u\in L^\infty (U)\cap W^{1,p}(U)$ are continuous functions and that $u$ is 
$\cA$-harmonic in $U$. Denote $h=|u-\theta|$ and suppose that
\[
\eta^p \Psi (h) \in W^{1,p}_0 (U).
\]
Then
\begin{align}\label{ineq-caccio}
\left(\int_U \eta^p \Psi^\prime (h) |\nabla u|^p\right)^{1/p} 
&\leq  \frac{\beta}{\alpha} \left(\int_U \eta^p \Psi^\prime (h) |\nabla \theta|^p\right)^{1/p}\\
\notag &\quad +\frac{p\beta}{\alpha}\left(\int_U \frac{\Psi^p}{(\Psi^\prime)^{p-1}}(h) |\nabla \eta|^p\right)^{1/p}.
\end{align}
\end{lemma}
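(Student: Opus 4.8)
The plan is to run a standard Caccioppoli argument: test the weak equation \eqref{eq_aharm} against a carefully chosen test function built from $\eta$, $\Psi$, and $h=|u-\theta|$, then absorb terms using the structure conditions \eqref{aharmstruc} and the Young inequality. First I would record that $h\in L^\infty(U)\cap W^{1,p}(U)$ with $|\nabla h|\le|\nabla u|+|\nabla\theta|$ a.e., and that on the (measurable) set where $u>\theta$ one has $\nabla h=\nabla u-\nabla\theta$, with the reverse sign where $u<\theta$; in either case $\langle\cA_x(\nabla u),\nabla h\rangle\sgn(u-\theta)=\langle\cA_x(\nabla u),\nabla u-\nabla\theta\rangle$ at a.e.\ point where $h>0$ (and $\nabla h=0$ a.e.\ on $\{h=0\}$). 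Since $\eta^p\Psi(h)\in W^{1,p}_0(U)$ by hypothesis, it is an admissible test function (by the approximation remark following \eqref{eq_aharm}), and
\[
\nabla\bigl(\eta^p\Psi(h)\bigr)=p\eta^{p-1}\Psi(h)\nabla\eta+\eta^p\Psi'(h)\nabla h .
\]
Because $\eta^p\Psi(h)\ge 0$, testing against $\varphi=\eta^p\Psi(h)\sgn(u-\theta)$ is not literally allowed (it is not a valid test function unless we split), so more precisely I would test separately on $\{u>\theta\}$ and $\{u<\theta\}$, or equivalently note that $w:=\eta^p\Psi(h)$ satisfies $\nabla w=p\eta^{p-1}\Psi(h)\nabla\eta+\eta^p\Psi'(h)\nabla h$ and that $\langle\cA_x(\nabla u),\nabla h\rangle$ carries the correct sign; the clean way is to observe $\max(u-\theta,0)$ and $\max(\theta-u,0)$ separately lie in the relevant Sobolev class and handle each.

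The main estimate then reads, after plugging $w=\eta^p\Psi(h)$ into $0=\int_U\langle\cA_x(\nabla u),\nabla w\rangle$ and rearranging,
\[
\int_U\eta^p\Psi'(h)\,\langle\cA_x(\nabla u),\nabla u-\nabla\theta\rangle
= -p\int_U\eta^{p-1}\Psi(h)\,\langle\cA_x(\nabla u),\nabla\eta\rangle .
\]
On the left, $\langle\cA_x(\nabla u),\nabla u\rangle\ge\alpha|\nabla u|^p$ by the first structure condition, and $|\langle\cA_x(\nabla u),\nabla\theta\rangle|\le\beta|\nabla u|^{p-1}|\nabla\theta|$ by the second; on the right, $|\langle\cA_x(\nabla u),\nabla\eta\rangle|\le\beta|\nabla u|^{p-1}|\nabla\eta|$. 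Writing $I=\int_U\eta^p\Psi'(h)|\nabla u|^p$, this yields
\[
\alpha I \le \beta\int_U\eta^p\Psi'(h)|\nabla u|^{p-1}|\nabla\theta|
+ p\beta\int_U\eta^{p-1}\Psi(h)|\nabla u|^{p-1}|\nabla\eta| .
\]
Now apply Hölder with exponents $p/(p-1)$ and $p$ to each term on the right, splitting the weight $\Psi'(h)$ as $\Psi'(h)^{(p-1)/p}\cdot\Psi'(h)^{1/p}$ in the first integral so that
\[
\int_U\eta^p\Psi'(h)|\nabla u|^{p-1}|\nabla\theta|
\le \Bigl(\int_U\eta^p\Psi'(h)|\nabla u|^p\Bigr)^{(p-1)/p}\Bigl(\int_U\eta^p\Psi'(h)|\nabla\theta|^p\Bigr)^{1/p}
= I^{(p-1)/p}\Bigl(\int_U\eta^p\Psi'(h)|\nabla\theta|^p\Bigr)^{1/p},
\]
and, for the second term, grouping $\eta^{p-1}|\nabla u|^{p-1}\Psi'(h)^{(p-1)/p}$ with exponent $p/(p-1)$ against $\Psi(h)\Psi'(h)^{-(p-1)/p}|\nabla\eta|$ with exponent $p$:
\[
\int_U\eta^{p-1}\Psi(h)|\nabla u|^{p-1}|\nabla\eta|
\le I^{(p-1)/p}\Bigl(\int_U\frac{\Psi(h)^p}{\Psi'(h)^{p-1}}|\nabla\eta|^p\Bigr)^{1/p}.
\]
Dividing through by $I^{(p-1)/p}$ (the case $I=0$ being trivial) gives exactly \eqref{ineq-caccio} with the constants $\beta/\alpha$ and $p\beta/\alpha$ as stated.

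The only genuinely delicate point — the step I expect to require the most care — is the admissibility and sign bookkeeping for the test function, i.e.\ justifying the chain rule for $\nabla\bigl(\eta^p\Psi(h)\bigr)$ and the identity $\langle\cA_x(\nabla u),\nabla\bigl(\eta^p\Psi(h)\sgn(u-\theta)\bigr)\rangle$-type manipulation when $\Psi'$ may blow up or vanish near $0$ and when $\{h=0\}$ has positive measure. This is handled in the standard way: $\Psi$ is smooth on $(0,\infty)$, so $\Psi(h)$ and $\Psi'(h)$ are well-defined a.e., $\nabla h=0$ a.e.\ on $\{h=0\}$, and the products appearing are integrable because $\eta$ is Lipschitz with compact support in $U$, $u,\theta\in W^{1,p}(U)\cap L^\infty(U)$, and the hypothesis $\eta^p\Psi(h)\in W^{1,p}_0(U)$ is imposed precisely so that it is an admissible test function; one may first work with $h_\varepsilon=\max(h,\varepsilon)$ or with $\max(u-\theta,0)$ and $\max(\theta-u,0)$ and pass to the limit. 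Everything else is Hölder's inequality and the structure conditions \eqref{aharmstruc}.
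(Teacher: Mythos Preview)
Your argument is correct and is essentially the same as the paper's: the paper states that the proof is a straightforward application of \eqref{eq_aharm} with the test function $f=\eta^p\Psi\bigl((u-\theta)^+\bigr)-\eta^p\Psi\bigl((u-\theta)^-\bigr)$, which is exactly your $\eta^p\Psi(h)\sgn(u-\theta)$, followed by the structure conditions \eqref{aharmstruc} and H\"older's inequality, precisely as you carry out. One minor slip: in your displayed identity the factor $\sgn(u-\theta)$ should sit on the right-hand side (multiplying the $\nabla\eta$ term) rather than disappear from the left, but since you pass to absolute values in the next line this does not affect the estimate.
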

The proof is a straightforward application of the $\cA$-harmonic equation \eqref{eq_aharm} for $u$ with 
the test function
$f=\eta^p \Psi ((u-\theta)^+) -\eta^p \Psi ((u-\theta)^-)$.
We omit the details and refer to \cite{Va2} for the proof. In Section~\ref{asdirmin} we prove a  
Caccioppoli inequality for solutions of the minimal graph equation.

Combining the Caccioppoli inequality \eqref{ineq-caccio} with a local Sobolev inequality (see 
\eqref{ineq-sobo} below) and running a Moser-type iteration we obtain pointwise estimates for the 
difference of an $\cA$-harmonic function and its boundary data in sufficiently small balls in terms 
of certain integral quantities in bigger balls. Recall that a local Sobolev inequality holds on any 
Cartan-Hadamard manifold. 
More precisely, there exist two constants $r_S>0$ and $C_S<\infty$ such that
\begin{equation}\label{ineq-sobo}
\left(\int_{B} |\eta|^{n/(n-1)} \right)^{(n-1)/n}\leq C_S \int_{B}|\nabla \eta|
\end{equation}
for every ball $B=B(x,r_S)\subset M$ and every function $\eta \in C_0^\infty (B)$.
Such an inequality was obtained by Hoffman and Spruck in \cite{HofSp}; see also \cite{Croke} and \cite{ChGrTa}. The following lemma is proved in \cite[Lemma 2.20]{Va2}. Below 
$\Omega\subset M$ is a nonempty open set. 

\begin{lemma}\cite[Lemma 2.20]{Va2}
\label{moserite}
Suppose that $\norm{\theta}_{L^\infty} \leq 1$. Suppose that $s\in (0, r_S)$ is a constant and $x\in M$. 
Denote $B=B(x,s)$. Suppose that $u\in W^{1,p}_{loc} (M)$ is a function that is $\cA$-harmonic in the open 
set $\Omega \cap B$, satisfies $u-\theta \in W_0^{1,p}(\Omega)$, $\inf_{M}\theta\le u\le \sup_{M}\theta$, 
and $u=\theta$ a.e. in $M\setminus \Omega$. Then 
\[
\esssup_{B(x,s/2)} \varphi\left(|u-\theta|\right)^{p(n+1)}\leq c \int_{B} \varphi\left(|u-\theta|\right)^p, 
\]
where the constant $c$ is independent of $x$.
\end{lemma}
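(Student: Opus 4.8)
The plan is to combine the Caccioppoli inequality (Lemma~\ref{caccio}) with the local Sobolev inequality \eqref{ineq-sobo} and run a Moser iteration on the quantities $\varphi(|u-\theta|)$. First I would fix the ball $B=B(x,s)$ and, for $0<\rho<\rho'\le s$, choose the standard Lipschitz cutoff $\eta$ with $\eta\equiv 1$ on $B(x,\rho)$, $\supp\eta\subset B(x,\rho')$, $0\le\eta\le1$, and $|\nabla\eta|\le 2/(\rho'-\rho)$. The key algebraic point is the choice of the homeomorphism $\Psi$ in Lemma~\ref{caccio}: at step $j$ of the iteration one takes $\Psi=\Psi_j$ defined (up to normalization) by $\Psi_j'=(\psi')^{q_j}$ for a suitable increasing exponent $q_j$, so that $\Psi_j'(h)|\nabla u|^p$ controls a power of $\varphi(h)$ via $\psi=(\varphi')^{p-1}\varphi$ and the relation \eqref{relpsi}; here one uses that $\|\theta\|_{L^\infty}\le 1$ and $\inf\theta\le u\le\sup\theta$ to keep $h=|u-\theta|$ in a bounded interval $(0,t_\delta]$ where the two-sided bounds \eqref{(2.18)} and \eqref{(2.19)} are available, after possibly rescaling $\varphi$ (recall $F(\lambda\cdot)\in\cF_p$). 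Since $u=\theta$ a.e.\ in $M\setminus\Omega$ and $u-\theta\in W_0^{1,p}(\Omega)$, the hypothesis $\eta^p\Psi(h)\in W_0^{1,p}(U)$ of Lemma~\ref{caccio} is met with $U$ a slightly larger relatively compact set, so \eqref{ineq-caccio} applies.

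The engine of the argument is then standard. Feeding the function $\eta\,\varphi(h)^{\kappa_j}$ (with $\kappa_j$ matching $q_j$) into the Sobolev inequality \eqref{ineq-sobo}, expanding $|\nabla(\eta\varphi(h)^{\kappa_j})|$, and absorbing the term with $\nabla\varphi(h)$ by \eqref{ineq-caccio} — where the $|\nabla\theta|^p$ term is harmless because $\theta$ is extended as a genuine $W^{1,p}$ function and its gradient appears only on the right — one obtains a reverse-Hölder type inequality
\[
\left(\fint_{B(x,\rho)}\varphi(h)^{\chi q_j}\right)^{1/\chi}\le \frac{C}{(\rho'-\rho)^p}\,s^{\,?}\int_{B(x,\rho')}\varphi(h)^{q_j}
\]
with $\chi=(n+1)/n$ the Sobolev gain (note $|\nabla\eta|^p$ contributes an extra factor, so the true gain in the exponent is $\chi$ as in the $p$-Laplacian case once the Sobolev inequality is iterated). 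Iterating over $j$ with $q_{j}=q_0\chi^{j}$, $q_0=p$, and a geometric sequence of radii $\rho_j=\tfrac{s}{2}(1+2^{-j})\searrow s/2$, the constants multiply to a finite product because $\sum_j j\chi^{-j}<\infty$, and one lands at
\[
\esssup_{B(x,s/2)}\varphi(h)^{p(n+1)}\le c\fint_{B(x,s)}\varphi(h)^{p},
\]
which, after clearing the volume normalization (the volume of $B(x,s)$ is bounded below by a dimensional constant times $s^n$ uniformly in $x$ on a Cartan–Hadamard manifold, since such a ball contains a Euclidean ball of comparable radius), gives exactly the claimed inequality with $c$ independent of $x$.

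The main obstacle is bookkeeping the interplay between the abstract Young function $\varphi$ and the Moser scheme: unlike the $p$-Laplacian, here the nonlinearity forces one to iterate with the "correct" test functions built from $\psi=(\varphi')^{p-1}\varphi$ rather than pure powers, and one must check that at every step of the iteration the argument $h$ stays inside the interval $(0,t_\delta]$ where \eqref{(2.18)}–\eqref{(2.19)} hold — this is what the hypotheses $\|\theta\|_{L^\infty}\le1$ and $\inf\theta\le u\le\sup\theta$ are for, together with a harmless rescaling of $F$. The other delicate point, already absorbed above, is that $\Psi^p/(\Psi')^{p-1}$ evaluated at $h$ must be comparably bounded by $\varphi(h)^p$ times the relevant power so that the $|\nabla\eta|^p$ term in \eqref{ineq-caccio} feeds cleanly into the next Sobolev step; this is precisely \eqref{(2.19)}. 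Since all of this is carried out in detail in \cite[Lemma 2.20]{Va2}, I would present the above as a sketch and refer there for the routine estimates.
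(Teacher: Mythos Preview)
The paper itself gives no proof of Lemma~\ref{moserite}, citing \cite{Va2} instead; the only place the iteration is carried out is the minimal–graph analogue Lemma~\ref{moseritemin}. Comparing your sketch with that proof, your overall strategy (Caccioppoli \eqref{ineq-caccio} plus the $W^{1,1}$ Sobolev inequality \eqref{ineq-sobo}, then Moser iteration on $\varphi(h)$) is exactly right, and your comment that $\|\theta\|_\infty\le 1$ together with $\inf\theta\le u\le\sup\theta$ keeps $h$ in the good range \eqref{(2.18)}–\eqref{(2.19)} is the correct observation. Two points, however, need correcting.

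First, the Sobolev gain in \eqref{ineq-sobo} is $\kappa=n/(n-1)$, not $(n+1)/n$. The exponent $n+1$ in the conclusion does \emph{not} come from the Sobolev step; it arises from the specific iteration sequence. In the paper's scheme one sets $\Phi=\varphi^p$, radii $s_j=s(1+\kappa^{-j})/2$, applies Sobolev to $\eta_j^p\Phi(h)^{m}$, and proves a one–step bound of the form
\[
\Bigl(\int_{B_{j+1}}\Phi(h)^{\kappa m}\Bigr)^{1/\kappa}\le c\,(\kappa^j+m+\kappa^{2j}/m)\int_{B_j}\Phi(h)^{m-1},
\]
then iterates with $m=m_j+1$, where $m_j=(n+1)\kappa^{j}-n$ solves $m_{j+1}=\kappa(m_j+1),\ m_0=1$. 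It is this recursion, not the Sobolev exponent, that produces the factor $(n+1)$ when one passes to $\limsup_j(\int_{B_j}\Phi(h)^{m_j})^{1/\kappa^j}\ge\esssup_{B/2}\Phi(h)^{n+1}$.

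Second, your proposed test function $\Psi_j$ with $\Psi_j'=(\psi')^{q_j}$ is not what is used and would not mesh cleanly with the Sobolev step. In the paper's argument the Caccioppoli lemma is invoked with the \emph{power} $\Psi=\Phi^{m}=\varphi^{pm}$; this is precisely what makes the $|\nabla\eta|^p$ term in \eqref{ineq-caccio}, namely $\Psi^p/(\Psi')^{p-1}=\Phi^{m+1}/\bigl(m\Phi'\bigr)^{p-1}$, fold back into $\Phi(h)^{m-1}$ via \eqref{good-varphi-prop} (the bounds $\Phi(h)\le 1$, $\Phi'(h)\le c$, and $\Phi(h)\le c\,\Phi'(h)$). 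Your choice $\Psi_j'=(\psi')^{q_j}$ neither controls a pure power of $\varphi(h)$ on the left of \eqref{ineq-caccio} nor yields a clean expression for $\Psi^p/(\Psi')^{p-1}$, so the iteration as you have written it does not close. If you simply replace your $\Psi_j$ by $\varphi^{pm_j}$ and use the recursion above, the argument goes through and matches the paper's proof of Lemma~\ref{moseritemin}.
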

In Section~\ref{asdirmin} we will state and prove a similar estimate for solutions of the minimal graph 
equation.  

Next we show that the integral appearing in Lemma \ref{moserite} can be estimated from above by another 
integral that will be uniformly bounded provided sectional curvatures of $M$ are bounded as 
in Theorem~\ref{thmharm}. 
\begin{lemma}\label{mainlemma}
Let $M$ be a Cartan-Hadamard manifold of dimension $n\ge 2$. Suppose that 
\[
K(P)\leq-\frac{1+\varepsilon}{r(x)^{2}\log r(x)},
\]
for some constant $\varepsilon>0,$ where $K(P)$ is the sectional curvature of any plane $P\subset T_{x}M$
that contains the radial vector $\nabla r(x)$  
and $x$ is any point in $M\setminus B(o,R_0)$. Fix 
$\tilde{\varepsilon}\in (0,\varepsilon)$ and let $R_1\ge R_0$ be 
given by Proposition~\ref{jacest}. Suppose that $U\Subset M$ is an open, relatively compact set and that $u$ is an 
$\cA$-harmonic function in $U$, with $u-\theta\in W_0^{1,p}(U)$, where $\cA\in\cA^p(M)$, with 
\begin{equation}\label{pbound}
p<\frac{n\alpha}{\beta},
\end{equation}
and $\theta\in W^{1,\infty}(M)$ is a continuous function, with $\norm{\theta}_{\infty}\le 1$.
Then there exist a bounded $C^1$-function $\cC\colon [0,\infty)\to [0,\infty)$ and a constant $c_0\ge 1$ that is 
independent of $\theta,\ U,$ and $u$ such that 
\begin{align}\label{phiFest}
&\quad\ \int_U \varphi\bigl(|u-\theta|/c_0\bigr)^p  \bigl(\log(1+r)+\cC(r)\bigr) \\
\notag &\leq c_0+ c_0\int_U F\left(\frac{c_0 |\nabla \theta|r\log(1+r)}{\log(1+r)+\cC(r)}\right)
\bigl(\log(1+r)+\cC(r)\bigr).
\end{align}
\end{lemma}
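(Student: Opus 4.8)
The plan is to use the Caccioppoli inequality of Lemma~\ref{caccio} with a carefully chosen weight function playing the role of $\eta^p$, and then to feed the curvature upper bound into the resulting integral estimate via the Jacobi comparison of Proposition~\ref{jacest}. More precisely, I would take $\Psi$ built from $\varphi$ (e.g. $\Psi$ related to $\psi=(\varphi')^{p-1}\varphi$ as in \ref{subsec_young}, so that $\Psi'(h)\approx \varphi'(h)^p$ and $\Psi^p/(\Psi')^{p-1}(h)\approx\varphi(h)^p$ by \eqref{(2.18)} and \eqref{(2.19)}) and take as the test weight a radial function $\eta=\eta(r)$ of the form $\eta(r)^p = \log(1+r)+\cC(r)$ outside $B(o,R_1)$, modified near $o$ so that $\eta$ is Lipschitz and $\eta^p\Psi(h)\in W^{1,p}_0(U)$ (here one uses $u-\theta\in W^{1,p}_0(U)$ and boundedness of $h=|u-\theta|$, plus an approximation of $\eta$ on the compact set $\bar U$). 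The function $\cC$ is the gadget that absorbs the contribution of $B(o,R_1)$ and makes $\eta$ globally defined and bounded; its precise form will be dictated by the ODE inequality that $\log(1+r)+\cC(r)$ must satisfy for the argument to close.

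The core computation is the following. Apply \eqref{ineq-caccio} and raise to the $p$-th power:
\begin{equation*}
\int_U \eta^p \Psi'(h)|\nabla u|^p \le C\int_U \eta^p \Psi'(h)|\nabla\theta|^p
+ C\int_U \frac{\Psi^p}{(\Psi')^{p-1}}(h)\,|\nabla\eta|^p .
\end{equation*}
The left-hand side controls $\int_U \eta^p\Psi'(h)|\nabla\varphi(h)|^p$ from below (since $|\nabla u|\ge|\nabla h|$ a.e. on $\{h>0\}$, and $\nabla\varphi(h)=\varphi'(h)\nabla h$ with $\varphi'(h)^p\approx\Psi'(h)$), so after a chain-rule / Sobolev manipulation one obtains a bound for $\int_U |\nabla(\eta\,\varphi(h))|^p$ up to an error term $\int_U |\nabla\eta|^p\varphi(h)^p$ that gets reabsorbed. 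Then one applies the local Sobolev inequality \eqref{ineq-sobo} (as in the Moser iteration of Lemma~\ref{moserite}) or integrates directly to pass from the gradient bound to the bound \eqref{phiFest} on $\int_U \varphi(h/c_0)^p\eta^p$. The weight $|\nabla\eta|^p = \big(\eta'(r)\big)^p$ must be dominated by the ``good'' part coming from the curvature: here Proposition~\ref{jacest} gives that along radial directions the metric expansion factor $f_k(r)$ satisfies $f_k'(r)/f_k(r)\ge 1/r + (1+\tilde\varepsilon)/(r\log r)$, which translates the weighted integral identity (integration by parts in $r$ against the volume form) into an estimate where the main term $\log(1+r)+\cC(r)$ appears with a favorable sign. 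The role of the hypothesis $p<n\alpha/\beta$ is exactly to guarantee that the constant in front of the reabsorbed term is $<1$, i.e. that $\beta/\alpha$ times the Sobolev/iteration constant beats the geometric gain $(1+\tilde\varepsilon)$; this is the usual threshold for such Moser-type arguments and must be checked explicitly.

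The right-hand side of \eqref{phiFest} is then produced by splitting the term $\int_U \eta^p\Psi'(h)|\nabla\theta|^p$ with Young's inequality for the complementary pair $\big(G(\cdot^{1/p})^p, F(\cdot^{1/p})\big)$: writing the integrand as a product of something involving $\Psi'(h)\approx\varphi'(h)^p$ against $|\nabla\theta|^p$ times the appropriate powers of $\eta$ and $r$, one applies $ab\le G(a^{1/p})^p + F(b^{1/p})$ with $a^{1/p}\sim\varphi'(h)$ so that, using $G\circ\varphi'=\varphi$ from \eqref{relvarphi1}, the $G$-term becomes $\int_U \varphi(h)^p\cdot(\text{weight})$ which is reabsorbed into the left side, while the $F$-term is precisely $\int_U F\big(c_0|\nabla\theta|r\log(1+r)/(\log(1+r)+\cC(r))\big)(\log(1+r)+\cC(r))$. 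The additive constant $c_0$ collects the contributions from the region $B(o,R_1)$ where the curvature bound is not assumed, together with boundary terms from the integration by parts; boundedness of $\theta$ (so $\|\theta\|_\infty\le1$) and of $\cC$ keeps all of this finite and independent of $U$ and $u$.

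The main obstacle is the bookkeeping needed to choose $\cC$ (equivalently, to choose $\eta$) so that \emph{simultaneously} (i) $\eta$ is Lipschitz on $M$ and $\eta^p\Psi(h)\in W^{1,p}_0(U)$, (ii) the error term $\int_U |\nabla\eta|^p\varphi(h)^p$ is controlled by $\varepsilon' \int_U \eta^p\Psi'(h)|\nabla\varphi(h)|^p$ plus the main term, so that it can be absorbed, and (iii) after integrating against the volume form (whose radial growth is governed by the \emph{lower} curvature bound $-(\log r)^{2\bar\varepsilon}/r^2$ — this is where $\bar\varepsilon<\varepsilon$ enters, ensuring the volume growth is slow enough that $\log(1+r)$ as a weight is not swamped) the constant in front of the reabsorbed terms stays strictly below $1$. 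Getting the inequality $p<n\alpha/\beta$ to be exactly the condition that makes the absorption work — rather than some strictly smaller range — is the delicate point, and it is handled by running the Sobolev/iteration step with sharp constants and optimizing the free parameter $\delta$ in \eqref{(2.18)}--\eqref{(2.19)} together with $\tilde\varepsilon$ in Proposition~\ref{jacest}.
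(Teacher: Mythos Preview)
Your proposal has the right ingredients (Caccioppoli, the Young pair $(G(\cdot^{1/p})^p,F(\cdot^{1/p}))$ with $G\circ\varphi'=\varphi$, and an absorption argument), but the architecture is inverted and several details are wrong.

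\textbf{The missing first step is a weighted Poincar\'e inequality from Laplacian comparison, not Sobolev.} The paper does not use the local Sobolev inequality at all in this lemma. Instead, the curvature upper bound gives $\Delta r\ge (n-1)\bigl(1/r+(1+\tilde\varepsilon)/(r\log r)\bigr)$ for $r\ge R_1$, hence $r\log(1+r)\Delta r\ge (n-1)L(r)$ with $L(r)=\log(1+r)+\cC(r)$ for a suitable bounded $\cC$. Testing $\varphi(h)^p\,r\log(1+r)$ against $\Delta r$ and integrating by parts yields, after H\"older, the Poincar\'e-type bound
\[
n\Bigl(\int_U\varphi(h)^p L(r)\Bigr)^{1/p}\le p\Bigl(\int_U|\nabla h|^p\varphi'(h)^p\,w^p\Bigr)^{1/p},\qquad w=\frac{r\log(1+r)}{L(r)^{(p-1)/p}}.
\]
Only \emph{after} this does Caccioppoli enter, applied with $\eta=w$ (not $\eta^p=L$, as you propose) and $\Psi=\psi$, to control $\int_U\psi'(h)|\nabla u|^p w^p$. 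The absorption hinges on the pointwise estimate $|\nabla w|^p\le L(r)$ for large $r$, so that the Caccioppoli error $\int\varphi(h)^p|\nabla w|^p$ is dominated by $\int\varphi(h)^pL(r)$ and returns to the left side. With your choice $\eta^p=L$ the gradient of $\eta$ is of the wrong shape and no such absorption is available; and the local Sobolev inequality on balls of radius $r_S$ cannot produce a global weighted estimate like \eqref{phiFest}.

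\textbf{Two further errors.} First, the curvature \emph{lower} bound is not used anywhere in this lemma (it enters only later, in Theorem~\ref{thm-A-regu}, to control the Jacobian when proving the right-hand side of \eqref{phiFest} is finite); the volume form plays no role in the proof of \eqref{phiFest} itself. Second, the inequality $|\nabla u|\ge|\nabla h|$ is false; one has $|\nabla h|\le(|\nabla u|+|\nabla\theta|)/c_0$, so the $|\nabla u|$ and $|\nabla\theta|$ contributions must be split by Minkowski before invoking Caccioppoli. Finally, the threshold $p<n\alpha/\beta$ appears precisely because the Poincar\'e step contributes the factor $n$ on the left and Caccioppoli contributes $p\beta(1+\delta)^2/\alpha$ on the right; it has nothing to do with Sobolev constants.
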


\begin{proof}
We begin by proving a weighted Poincar\'e-type inequality. 
First of all, we have
\[
\Delta r\ge\frac{n-1}{r}
\]
in $M\setminus\{o\}$ since $M$ is a Cartan-Hadamard manifold. Moreover, by applying the standard Laplace 
comparison theorem and Proposition \ref{jacest}, we find that
\[
\Delta r(x)\geq (n-1)\left(\frac{1}{r(x)}+\frac{1+\tilde{\varepsilon}}{r(x)\log r(x)}\right)
\]
whenever $r(x)\ge R_1$. Therefore
\begin{equation}\label{estlaplacian}
r\log(1+r)\Delta r\geq (n-1)\left(\log(1+r)+\cE(r)\right)
\end{equation}
in $M$, where $\cE\colon [0,\infty)\to [0,\infty)$ is a bounded $C^1$-function satisfying 
\begin{equation}\label{E-defn}
\cE(r)=\begin{cases}
0, &\mbox{if }0\le r\le R_1;\\
\frac{(1+\tilde{\varepsilon})\log(1+r)}{\log r}, &\mbox{if }r\ge 2R_1.
\end{cases}
\end{equation}
By the assumption  \eqref{pbound}, we can choose $\delta>0$ such that
\begin{equation}\label{newpbound}
p<\frac{n\alpha}{(1+\delta)^2\beta}.
\end{equation}
Denote $h=|u-\theta|/c_0$, where the constant $c_0>0$ will be specified in due course.
Since $-1\le\inf_U\theta\le u\le\sup_U\theta\le 1$ in $U$, we may assume that $c_0$ is so 
large that $\norm{h}_{\infty}\le t_{\delta}$, where $t_{\delta}>0$ is a constant such that 
\eqref{(2.18)} and \eqref{(2.19)} hold for all $t\in (0,t_{\delta}]$.

Using \eqref{estlaplacian} and integration by parts, we obtain
\begin{align*}
&\quad\ (n-1) \int_U \varphi(h)^p \bigl(\log(1+r)+\cE(r)\bigr)\\
&\leq \int_U \varphi(h)^p r\log(1+r) \Delta r 
= -\int_U \left\langle \nabla \bigl(\varphi(h)^p r\log(1+r)\bigr),\nabla r  \right\rangle \\
&= -\int_U \varphi(h)^p\left(\tfrac{r}{1+r}+\log(1+r)\right) 
- p \int_U r \log(1+r)\varphi(h)^{p-1} \varphi^\prime (h) \langle \nabla h,\nabla r\rangle .
\end{align*}
This, together with H\"older's inequality, gives rise to
\begin{align*}
&\quad\ n\int_U \varphi(h)^p \left(\log(1+r)+\cC(r)\right)\\
&\leq  p\int_U r\log(1+r) \varphi(h)^{p-1} \varphi^\prime (h) |\nabla h|\\
&\leq  p \left( \int_U |\nabla h|^p \varphi^\prime (h)^p 
\frac{\bigl(r\log(1+r)\bigr)^p}{\bigl(\log(1+r)+\cC(r)\bigr)^{p-1}}\right)^{1/p}\\
&\quad\times \left(\int_U \varphi(h)^p  \bigl(\log(1+r)+\cC(r)\bigr)\right)^{(p-1)/p},
\end{align*}
where 
\begin{equation}\label{C-defn}
\cC(r)=\frac{r}{n(1+r)}+\frac{(n-1)\cE(r)}{n}.
\end{equation}
To simplify notation, we set
\begin{equation}\label{L-defn}
L(r)=\log(1+r)+\cC(r)
\end{equation}
and
\begin{equation}\label{w-defn}
w = \frac{r\log(1+r)}{\bigl(\log(1+r)+\cC(r)\bigr)^{(p-1)/p}}.
\end{equation}
Hence
\begin{equation}
\label{eqintlem2}
n \left(\int_U \varphi(h)^p L(r)\right)^{1/p}
\leq p \left( \int_U |\nabla h|^p \varphi^\prime (h)^p w^p \right)^{1/p}
\end{equation}
The gradient of $w$ is given by
\begin{equation}\label{eq-gradw}
\nabla w = L(r)^{1/p}\biggl(\frac{\log(1+r)+\tfrac{r}{1+r}}{L(r)}
+(\tfrac{1}{p}-1)\frac{r\log(1+r)\bigl(\tfrac{1}{1+r}+\cC^\prime (r)\bigr)}
{L(r)^2}\biggr)\nabla r.
\end{equation}
We claim that  
\begin{equation}\label{eq-gradw-est}
|\nabla w|\le L(r)^{1/p}
\end{equation}
for all $r$ large enough, say $r\ge R_2$, and 
\begin{equation}\label{eq-gradw-est2}
|\nabla w|\le c
\end{equation}
in $B(o,R_2)$.
To prove \eqref{eq-gradw-est}, we first note that $\cC'(r)r\to 0$ as $r\to\infty$, and therefore
\[
\log(1+r)+\frac{r}{1+r}-\frac{r\log(1+r)\bigl(\tfrac{1}{1+r}+\cC'(r)\bigr)}{L(r)}
\ge 0
\]
whenever $r$ is large enough. We have, for $r\ge R_2\ge R_1$,
\begin{align*}
0 &\le \frac{\log(1+r)+\tfrac{r}{1+r}}{L(r)} 
+(\tfrac{1}{p}-1)\frac{r\log(1+r)\bigl(\tfrac{1}{1+r}+\cC^\prime (r)\bigr)}
{L(r)^2} \\
&\le \frac{\log(1+r)+\tfrac{r}{1+r}}{L(r)} \\
&=  \frac{\log(1+r)+\tfrac{r}{1+r}}
{\log(1+r)+\tfrac{r}{1+r}+(\tfrac{1}{n}-1)\tfrac{r}{1+r}
+(1-\tfrac{1}{n})\tfrac{(1+\tilde{\varepsilon})\log(1+r)}{\log r}}\\
&\le 1 
\end{align*} 
since
\begin{align*}
&\quad\ \frac{\left(\tfrac{1}{n}-1\right)r}{1+r}
+\frac{\left(1-\tfrac{1}{n}\right)(1+\tilde{\varepsilon})\log(1+r)}{\log r}\\
&=\left(1-\frac{1}{n}\right)\left(\frac{(1+\tilde{\varepsilon})\log(1+r)}{\log r}-\frac{r}{1+r}\right)>0.
\end{align*}
Hence \eqref{eq-gradw-est} follows. The estimate \eqref{eq-gradw-est2} holds since $w$ is smooth in 
$M\setminus\{o\}$ and $w(r)/r\to 0$ as $r\to 0$.

Using the estimate $|\nabla h|\leq (|\nabla u|+|\nabla \theta |)/c_0 $, Minkowski's inequality,
and \eqref{(2.18)}, we obtain
\begin{align}\label{eqint2lem2}
&\quad\ \biggl(\int_U |\nabla h|^p \varphi^\prime (h)^p  w^p\biggr)^{1/p}\\
&\leq  c_0^{-1} \left(\int_U \left(\varphi^\prime (h) w |\nabla u|
+\varphi^\prime (h) w |\nabla \theta|\right)^p\right)^{1/p}\nonumber\\
\nonumber &\leq c_0^{-1} \left(\int_U \varphi^\prime (h)^p |\nabla u|^p w^p\right)^{1/p} 
+ c_0^{-1}\left(\int_U \varphi^\prime (h)^p |\nabla \theta|^p w^p\right)^{1/p}\\
&\leq  \frac{1+\delta}{c_0p^{1/p}} \left[\left(\int_U \psi^\prime (h) |\nabla u|^p w^p\right)^{1/p}
+\left(\int_U \psi^\prime (h) |\nabla \theta|^p w^p\right)^{1/p}\right]\nonumber.
\end{align}
Applying the Caccioppoli inequality~\eqref{ineq-caccio} with $u$ and $\theta$ replaced by $u/c_0$ 
and $\theta/c_0$, respectively, to the first term on the right-hand together with \eqref{(2.19)}, we obtain
\begin{align}
\label{eqint3lem2}
&\quad\ \biggl(\int_U w^p  \psi^\prime (h)  |\nabla u|^p\biggr)^{1/p}\\
\nonumber & \leq  \frac{\beta}{\alpha} \left(\int_U w^p \psi^\prime (h) |\nabla \theta|^p\right)^{1/p}
 +\frac{p\beta c_0}{\alpha} \left(\int_U \frac{\psi^p}{(\psi^\prime)^{p-1}}(h) |\nabla w|^p\right)^{1/p} \\
 &\le\frac{\beta}{\alpha} \left(\int_U w^p \psi^\prime (h) |\nabla \theta|^p\right)^{1/p}
	+ \frac{p^{1/p}\beta c_0(1+\delta)}{\alpha} \left(\int_U \varphi(h)^p |\nabla w|^p\right)^{1/p}.\nonumber\hspace{-1ex}
\end{align}
Now combining \eqref{eqintlem2}, \eqref{eqint2lem2}, and \eqref{eqint3lem2}, we find
\begin{align*}
&\quad\ n \left(\int_U \varphi(h)^p L(r)\right)^{1/p}
\leq  p \left(\int_U |\nabla h|^p \varphi^\prime (h)^p w^p\right)^{1/p}\\
&\leq  (1+\delta ) p^{1-\tfrac 1p}c_0^{-1} 
\left[\left(\int_U \psi^\prime (h) |\nabla u|^p w^p\right)^{1/p}  
+ \left(\int_U \psi^\prime (h) |\nabla \theta|^p w^p\right)^{1/p}\right]\\
&\leq (1+\delta )p^{1-1/p}c_0^{-1}\left[(1+\tfrac{\beta}{\alpha})
\left(\int_U \psi^\prime (h)|\nabla \theta|^p w^p\right)^{1/p} \right.\\
&\quad\left. +\frac{p^{1/p}\beta c_0 (1+\delta)}{\alpha}\left(\int_U \varphi(h)^p |\nabla w|^p
\right)^{1/p}\right]\\
&\leq (1+\delta ) p^{1-1/p}c_0^{-1}(1+\tfrac{\beta}{\alpha})\left(\int_U \psi^\prime (h)|\nabla 
\theta|^p w^p\right)^{1/p} \\
&\quad +\frac{p\beta (1+\delta)^2}{\alpha}\left(\int_U \varphi(h)^p L(r)\right)^{1/p}+ C,
\end{align*}
where in the last step we used \eqref{eq-gradw-est} and \eqref{eq-gradw-est2} to estimate
\begin{align*}
\int_U \varphi(h)^p |\nabla w|^p 
& = \int_{U\cap B(o,R_2)} \varphi(h)^p |\nabla w|^p 
+ \int_{U\setminus B(o,R_2)} \varphi(h)^p |\nabla w|^p\\
&\le \tilde{C}+\int_U \varphi(h)^p L(r).
\end{align*}
Since
\[
p<\dfrac{n\alpha}{(1+\delta)^2\beta},
\]
it follows that there exists a constant $C$ depending on $p,n, \alpha,\beta$ such that
\begin{equation}\label{eq-phiphiprime}
\int_U \varphi(h)^p L(r) 
\leq  C \int_U \varphi^\prime (h)^p |\nabla \theta|^p w^p +C_0.
\end{equation}
Next, recalling that $F(\cdot^{1/p})$ and $G(\cdot^{1/p})^p$ are complementary Young functions, we have, for all $x,y\geq 0$ and $k>0$,
\begin{align}\label{youngineq}
	x y &=kx(y/k)\leq k\left(G(x^{1/p})^p + F(k^{-1/p}y^{1/p})\right)\\
	\notag &= kG(x^{1/p})^p + kF(k^{-1/p}y^{1/p}).
\end{align}
The definition of $w$, previous inequalities \eqref{eq-phiphiprime}, \eqref{youngineq}, and 
\eqref{relvarphi1} yield 
\begin{align*}
\int_U \varphi(h)^p  L(r) &\leq
C \int_U \varphi^\prime (h)^p L(r)
\left(\frac{|\nabla \theta|r\log(1+r)}{L(r)}\right)^p +C_0\\
&\leq  Ck \int_U G\bigl(\varphi^\prime (h)\bigr)^p L(r)\\
&\quad + Ck\int_U F \left(\frac{k^{-1/p}|\nabla \theta|r\log(1+r)}{L(r)}\right) L(r) + C_0\\
&=  Ck \int_U \varphi(h)^p L(r)\\
&\quad + Ck\int_U F \left(\frac{k^{-1/p}|\nabla \theta|r\log(1+r)}{L(r)}\right) 
L(r) + C_0.
\end{align*}
Taking $k>0$ small enough, we finally obtain
\[
\int_U \varphi(h)^p L(r) 
\leq \frac{Ck}{1-Ck}\int_U F \left(\frac{k^{-1/p}|\nabla \theta|r\log(1+r)}
{L(r)}\right)L(r)+\frac{C_0}{1-Ck}.
\]
\end{proof}

We are now in position to prove Theorem~\ref{thmharm}. In fact, we prove the following localized version concerning 
the $\cA$-regularity of a point $x_0\in\partial_\infty M$ which then implies Theorem~\ref{thmharm} since the uniqueness statement follows from the comparison principle.
\begin{theorem}\label{thm-A-regu}
Let $M$ be a Cartan-Hadamard manifold of dimension $n\ge 2$. Suppose that 
\begin{equation}\label{curv_ass_minim2b}
-\frac{\bigl(\log r(x)\bigr)^{2\bar{\varepsilon}}}{r(x)^2}\leq K(P)\leq
-\frac{1+\varepsilon}{r(x)^{2}\log r(x)},
\end{equation}
for some constants $\varepsilon>\bar{\varepsilon}>0,$ where $K(P)$ is the sectional
curvature of any plane $P\subset T_{x}M$ that contains the radial vector $\nabla r(x)$ and $x$ is any point 
in a cone neighborhood $U$ of $x_0\in \partial_\infty M$.
Then $x_0$ is $\cA$-regular for every $\cA\in\cA_p(M)$, with $1< p< n\alpha/\beta$.
\end{theorem}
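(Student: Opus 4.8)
The plan is to prove the $\cA$-regularity of $x_0$ by Perron's method: we produce, for every $f\in C(\partial_\infty M)$ and every $\eta>0$, a function $\bar u\in\cU_f$ with $\limsup_{x\to x_0}\bar u(x)\le f(x_0)+\eta$. Then $\overline{H}_f\le\bar u$ gives $\limsup_{x\to x_0}\overline{H}_f(x)\le f(x_0)$, and applying the same to $-f$ together with the standard comparison $\underline{H}_f\le\overline{H}_f$ (both Perron solutions being finite, hence $\cA$-harmonic) gives $\liminf_{x\to x_0}\overline{H}_f(x)\ge f(x_0)$, i.e.\ $\lim_{x\to x_0}\overline{H}_f(x)=f(x_0)$. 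Since $\cA$-regularity of $x_0$ is unaffected by replacing $f$ with $c\bigl(f-f(x_0)\bigr)$, $c>0$, we may assume $f(x_0)=0$ and $\|f\|_\infty\le\tfrac14$. Fix $\tilde\varepsilon\in(\bar\varepsilon,\varepsilon)$ and $\varepsilon_0\in(0,1)$, let $F\in\cF_p$ satisfy \eqref{eq-F-upper} with this $\varepsilon_0$, and let $\varphi$ be the associated function. Put $v=\dot\gamma^{o,x_0}_0$ and choose $\alpha_0>0$, $R'\ge R_0$ so small that $C(v,2\alpha_0)\setminus\bar B(o,R')\subset U$ and $|f|<\eta/4$ on $C(v,2\alpha_0)\cap\partial_\infty M$. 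Let $\theta\colon M\to[\eta/2,\,1/2]$ be a Lipschitz function equal to $\eta/2$ on $C(v,\alpha_0)$, equal to $1/2$ on $\bigl(M\setminus C(v,2\alpha_0)\bigr)\cup B(o,R')$, and interpolating in between as a smooth function of the angle $\sphericalangle\bigl(v,\dot\gamma^{o,\cdot}_0\bigr)$.

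The crucial geometric input is an estimate for $|\nabla\theta|$. Because $\theta$ depends only on the angular variable, Rauch comparison applied with the curvature \emph{upper} bound in \eqref{curv_ass_minim2b} along radial geodesics gives $|\nabla\theta(x)|\le C/f_k\bigl(r(x)\bigr)$ for $x\in C(v,2\alpha_0)$, where $f_k$ solves \eqref{Jacobi_eq} with $k^2(r)=(1+\varepsilon)/(r^2\log r)$; by Proposition~\ref{jacest}, $f_k(r)\ge r(\log r)^{1+\tilde\varepsilon}$ for $r$ large, so
\[
|\nabla\theta(x)|\le \frac{C}{r(x)\bigl(\log r(x)\bigr)^{1+\tilde\varepsilon}}\qquad\text{for }x\in C(v,2\alpha_0),\ r(x)\text{ large},
\]
while $\nabla\theta\equiv0$ on $C(v,\alpha_0)$ and on $M\setminus C(v,2\alpha_0)$. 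Now exhaust the neighborhood $V:=\bigl(C(v,2\alpha_0)\cup B(o,R')\bigr)\cap M$ of $x_0$ by relatively compact open sets $\Omega_k\Subset\Omega_{k+1}$ with $\bigcup_k\Omega_k=V$, and let $u_k$ be the $\cA$-harmonic function in $\Omega_k$ with $u_k-\theta\in W^{1,p}_0(\Omega_k)$; the comparison principle gives $\eta/2\le u_k\le1/2$. We apply Lemma~\ref{mainlemma} with $U=\Omega_k$: its proof uses \eqref{curv_ass_minim2b} only through the Laplace comparison estimate $\Delta r(x)\ge(n-1)\bigl(\tfrac1{r(x)}+\tfrac{1+\tilde\varepsilon}{r(x)\log r(x)}\bigr)$ at points $x\in\Omega_k$ with $r(x)\ge R_1$, which needs the upper curvature bound only along the radial segment $[o,x]$, and $[o,x]\subset C(v,2\alpha_0)$ whenever $x\in C(v,2\alpha_0)$. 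Since $p<n\alpha/\beta$, \eqref{phiFest} yields
\[
\int_{\Omega_k}\varphi\bigl(|u_k-\theta|/c_0\bigr)^p L(r)\;\le\; c_0+c_0\int_{\Omega_k}F\!\left(\frac{c_0\,|\nabla\theta|\,r\log(1+r)}{L(r)}\right)L(r),
\]
with $L(r)=\log(1+r)+\cC(r)$ and $c_0\ge1$ independent of $k$.

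The heart of the argument is to bound the last integral uniformly in $k$. On $C(v,\alpha_0)$ and on $M\setminus C(v,2\alpha_0)$ the integrand vanishes. On the shell $C(v,2\alpha_0)\setminus C(v,\alpha_0)$, since $L(r)\ge\log(1+r)$ and $\cC$ is bounded, the argument of $F$ is $\lesssim(\log r)^{-1-\tilde\varepsilon}$, and \eqref{eq-F-upper} gives
\[
F\!\left(\frac{c_0\,|\nabla\theta|\,r\log(1+r)}{L(r)}\right)L(r)\;\lesssim\;(\log r)\exp\!\Bigl(-c_1(\log r)^{1+\tilde\varepsilon}(\log\log r)^{-1-\varepsilon_0}\Bigr)
\]
for $r$ large. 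On the other hand, the curvature \emph{lower} bound in \eqref{curv_ass_minim2b} gives $\ric_x\ge-(n-1)(\log r(x))^{2\bar\varepsilon}/r(x)^2$ in radial directions, so the Bishop--Gromov comparison yields $\vol(\partial B_\rho\cap V)\le\vol(\partial B_\rho)\lesssim\rho^{\,n-1}\exp\bigl(c_2(\log\rho)^{1+\bar\varepsilon}\bigr)$. Writing $\int_V$ in polar coordinates and substituting $\rho=e^s$, the integrand becomes, up to constants, $s\exp\bigl(ns+c_2s^{1+\bar\varepsilon}-c_1s^{1+\tilde\varepsilon}(\log s)^{-1-\varepsilon_0}\bigr)$; since $\tilde\varepsilon>\bar\varepsilon$, the term $-c_1s^{1+\tilde\varepsilon}(\log s)^{-1-\varepsilon_0}$ dominates $ns+c_2s^{1+\bar\varepsilon}$, so $\int_V F(\cdots)L(r)<\infty$ and hence $\int_{\Omega_k}\varphi(|u_k-\theta|/c_0)^p L(r)\le C_2$ with $C_2$ independent of $k$. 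This uniform estimate---in which both curvature bounds, Proposition~\ref{jacest}, and the tailored Young function of Proposition~\ref{growthyoung} all enter, and in which the hypothesis $\varepsilon>\bar\varepsilon$ is essential---is the main obstacle.

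It remains to extract the barrier. Extend $u_k$ by $\theta$ outside $\Omega_k$. Given $x\in C(v,\alpha_0)$ with $r(x)$ large, the ball $B(x,r_S/2)$ subtends a small angle at $o$ and hence lies in $\Omega_k$ for $k$ large; Lemma~\ref{moserite} applied to $u_k/c_0$ and $\theta/c_0$ then gives
\[
\esssup_{B(x,r_S/4)}\varphi\bigl(|u_k-\theta|/c_0\bigr)^{p(n+1)}\;\le\; c\int_{B(x,r_S/2)}\varphi\bigl(|u_k-\theta|/c_0\bigr)^p\;\le\;\frac{c\,C_2}{L\bigl(r(x)-r_S\bigr)},
\]
which tends to $0$ as $r(x)\to\infty$, uniformly in $k$; since $\theta\equiv\eta/2$ on $C(v,\alpha_0)$ this forces $|u_k(x)-\eta/2|\to0$ as $r(x)\to\infty$ within $C(v,\alpha_0)$, uniformly in $k$. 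By the interior regularity of $\cA$-harmonic functions, the uniformly bounded family $(u_k)$ has a subsequence converging locally uniformly in $V$ to an $\cA$-harmonic $u$ with $\eta/2\le u\le1/2$ and $\limsup_{x\to x_0}u(x)\le\eta/2$; a standard barrier argument at the boundary $\partial V\cap M$, on which $u_k\equiv1/2$, shows $u$ extends continuously there with value $1/2$. Set $\bar u=u$ on $V$ and $\bar u=1/2$ on $M\setminus V$; by the pasting lemma for $\cA$-superharmonic functions $\bar u$ is $\cA$-superharmonic on $M$, it is bounded below, $\limsup_{x\to x_0}\bar u(x)\le\eta/2\le f(x_0)+\eta$, and $\liminf_{x\to y}\bar u(x)\ge f(y)$ for every $y\in\partial_\infty M$: if $y\in\overline{C(v,2\alpha_0)}\cap\partial_\infty M$ then $\bar u\ge\eta/2>|f(y)|$, while near any $y$ outside we have $\bar u\equiv1/2\ge\|f\|_\infty\ge f(y)$. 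Thus $\bar u\in\cU_f$, which completes the argument as described at the outset.
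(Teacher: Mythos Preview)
Your argument follows essentially the same route as the paper's proof: build an angular cut-off $\theta$, solve the $\cA$-Dirichlet problem on an exhaustion of a truncated cone, feed the solutions into Lemma~\ref{mainlemma} and Lemma~\ref{moserite}, verify the finiteness of the $F$-integral from both curvature bounds, paste the limit with a constant to obtain a superharmonic barrier, and finish via Perron. Your observation that the proof of Lemma~\ref{mainlemma} only uses the curvature upper bound through the Laplace comparison along radial segments (hence can be localized to the cone) is correct and is exactly how the paper applies it.

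There is one genuine slip in the volume step. You write
\[
\vol(\partial B_\rho\cap V)\le\vol(\partial B_\rho)\lesssim\rho^{\,n-1}\exp\bigl(c_2(\log\rho)^{1+\bar\varepsilon}\bigr),
\]
invoking Bishop--Gromov for the \emph{full} sphere $\partial B_\rho$. But the curvature lower bound in \eqref{curv_ass_minim2b} is assumed only in the cone neighborhood $U$, so you have no control on $\vol(\partial B_\rho)$. What you actually need---and what the paper does---is to work in geodesic polar coordinates and bound the Jacobian $\lambda(r,v)$ pointwise for directions $v$ in the cone: Rauch comparison along the radial geodesic $\gamma^{o,x}$ (which stays in $C(v_0,2\alpha_0)\subset U$) gives $\lambda(r,v)\le f_b(r)^{n-1}$ with $f_b$ the Jacobi solution for $b^2(t)=(\log t)^{2\bar\varepsilon}/t^2$; see \eqref{eq-jacdet-est}. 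This yields the same growth $\exp\bigl(c_2(\log r)^{1+\bar\varepsilon}\bigr)$ that you wrote, but now justified from the local hypothesis. With this correction your integral estimate goes through.

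Two smaller presentational points: your definition of $\theta$ assigns conflicting values on $C(v,\alpha_0)\cap B(o,R')$ (you want $\theta=\tfrac12$ on $B(o,R')$ to take precedence, or simply use a truncated cone as in the paper's explicit formula for $\theta$); and for the pasting lemma it is safer to take $\bar u=\min(2u-\eta/2,\,1/2)$ rather than $\bar u=u$, so that lower semicontinuity across $\partial V$ is automatic without a separate boundary-regularity argument---this is the analogue of the paper's $w=\min(1,2u)$.
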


\begin{proof}
Let $f\colon \partial_\infty M\to \R$ be a continuous function. To prove that $x_0$ is $\cA$-regular, we 
need to show that
\[
\lim_{x\to x_0} \overline{H}_f (x)=f(x_0).
\]
Fix an arbitrary $\varepsilon' >0$. Let $v_0=\dot{\gamma}_0^{o,x_0}$ be the initial vector of the geodesic 
ray from $o$ to $x_0$. Furthermore, let $\delta\in (0,\pi)$ and $R_0>0$ be such that 
$T(v_0,\delta ,R_0)\subset U$ and that $|f(x_1)-f(x_0)|<\varepsilon'$ for all 
$x_1\in C(v_0,\delta)\cap \partial_\infty M$; see \ref{subsec_hada} for the notation. 
Next we fix $\tilde{\varepsilon}\in (\bar{\varepsilon},\varepsilon)$, where 
$\varepsilon>\bar{\varepsilon}>0$ are the constants in the curvature assumption 
\eqref{curv_ass_minim2b}. Let $r_1>\max(2,R_1)$, where $R_1\ge R_0$ is given by 
Proposition \ref{jacest}. We denote $\Omega=T(v_0,\delta,r_1)\cap M$ and define $\theta \in C(\bar{M})$ by 
setting
\[
\theta (x)=\min \left(1,\max \left(r_1+1-r(x),\delta^{-1}\sphericalangle_o (x_0,x)\right)\right).
\]
Note that $\theta=1$ on $\partial\Omega$.
Let $\Omega_j=\Omega\cap B(o,j)$ for integers $j>r_1$ and let $u_j$ be the unique $\cA$-harmonic function in 
$\Omega_j$ with $u_j-\theta\in W^{1,p}_{0}(\Omega_j)$. It is clear that each $y\in\partial\Omega_j$ is $\cA$-regular
and hence $u_j$ can be continuously extended to $\partial\Omega_j$ by setting $u_j=\theta$ on $\partial\Omega_j$. 
Since $0\le u_j\le 1$, the sequence $(u_j)$ is equicontinuous, and therefore by the Ascoli-Arzel\'a theorem, there 
exists a subsequence, still denoted by $(u_j)$, that converges locally uniformly to a continuous function 
$u\colon\bar\Omega\to [0,1]$. It follows that $u$ is $\cA$-harmonic in $\Omega$; see e.g. 
\cite[Chapter 6]{HKM} for these  
boundary regularity and convergence results. 
Next we prove that
\begin{equation}\label{(3.18II)}
\lim_{ \stackrel{ \mbox{\scriptsize$x\to x_0$} }{x\in\Omega}}u(x)=0.
\end{equation}
Denote $\tilde{\theta}=\theta/c_0,\ \tilde{u}_j=u_j/c_0$, and $\tilde{u}=u/c_0$, where $c_0$ is given by 
Lemma~\ref{mainlemma}. 
Fatou's lemma and Lemma~\ref{mainlemma} applied to 
$U=\Omega_{j}$ imply that
\begin{align}
\label{thme1}
\int_\Omega \varphi (|\tilde{u}-\tilde{\theta}|)^p 
&= \int_\Omega \varphi (|u -\theta|/c_0)^p 
\leq \liminf_{j\to \infty} \int_{\Omega_j} 
\varphi (|u_j -\theta|/c_0)^p\\
&\leq \liminf_{j\to \infty} \int_{\Omega_j} 
\varphi (|u_j -\theta|/c_0)^p L(r)\nonumber\\
&\leq c_0 + c_0 \int_\Omega 
F\left(\frac{c_0 |\nabla \theta|r\log(1+r)}{L(r)}\right)L(r).
\nonumber
\end{align} 
We will show at the end of the proof that the right-side in \eqref{thme1} is finite.
Meanwhile we extend each $u_j$ to a function $u_j\in W_{\loc}^{1,p}(M)\cap C(M)$ by setting 
$u_j(y)=\theta(y)$ for every $y\in M\setminus \Omega_j$. Let $x\in\Omega$ and fix 
$s\in (0,r_S)$. If $j$ is large enough, $\tilde{u}_j$ satisfies the 
assumption of Lemma~\ref{moserite} and hence 
\[
\sup_{B(x,s/2)} \varphi (|\tilde{u}_j - \tilde{\theta}|)^{p(n+1)}
\leq c\int_{B(x,s)} \varphi (|\tilde{u}_j - \tilde{\theta}|)^p.
\]
Note that we may replace $\esssup$ by $\sup$ because $u_j-\theta$ is continuous in $M$. The dominated 
convergence theorem implies that
\begin{align}\label{(3.19II)}
\sup_{B(x,s/2)} \varphi (|\tilde{u}-\tilde{\theta}|)^{p(n+1)}
&= \sup_{B(x,s/2)}\lim_{j\to \infty} \varphi (|\tilde{u}_j-\tilde{\theta}|)^{p(n+1)}\\
\nonumber &\leq \limsup_{j\to \infty}\sup_{B(x,s/2)}\varphi (|\tilde{u}_j-\tilde{\theta}|)^{p(n+1)}\\
&\leq c \limsup_{j\to \infty} \int_{B(x,s)} \varphi(|\tilde{u}_j-\tilde{\theta}| )^p\nonumber\\
&= c\int_{B(x,s)} \varphi (|\tilde{u}-\tilde{\theta}|)^p.\nonumber
\end{align}
Let $(x_k)$ be a sequence of points in $\Omega$ so that $x_k \to x_0$ as $k\to\infty$. Applying the 
estimate \eqref{(3.19II)} above with $x=x_k$ and a fixed $s\in (0,r_S)$ and assuming that the right-side of \eqref{thme1} is finite we obtain
\[
\lim_{k\to \infty}\sup_{B(x_k,s/2)} \varphi (|\tilde{u}-\tilde{\theta}|)^{p(n+1)}
\le c \lim_{k\to \infty}
\int_{B(x_k,s)} \varphi (|\tilde{u}-\tilde{\theta}|)^p=0.
\]
Hence
\[
\lim_{k\to\infty}|\tilde{u}(x_k)-\tilde{\theta}(x_k)|=0
\]
and, consequently, \eqref{(3.18II)} holds.
Next we define $w\colon M\to \R$ by
\[
w(x)=\begin{cases}
\min \bigl(1, 2u(x)\bigr),&\mbox{if } x\in \Omega; \\
1,&\mbox{if } x\in M\setminus \Omega.
\end{cases}
\]
Then $w$ is $\cA$-superharmonic in $M$ (see \cite[Lemma 7.2]{HKM}) and hence, by the definition of 
$\overline{H}_f$, we have
\[
\overline{H}_f \leq f(x_0) +\varepsilon' +2(\sup |f|)w.
\]
Hence, by \eqref{(3.18II)}
\[
\limsup_{x\to x_0} \overline{H}_f (x) \leq f(x_0) +\varepsilon'.
\] 
One can prove in a similar way that 
\[
\liminf_{x\to x_0} \underline{H}_f (x)\geq f(x_0)-\varepsilon'.
\] 
We deduce that 
\[
\lim_{x\to x_0} \overline{H}_f (x)=f(x_0),
\] 
and therefore $x_0$ is $\cA$-regular.

To conclude the proof, it remains to show that
\begin{equation}
\label{thmlasint}
\int_\Omega 
F\left(\frac{c_0 |\nabla \theta|r\log(1+r)}{L(r)}\right)L(r)<\infty.
\end{equation}
Recall that above $\Omega=T(v_0,\delta,r_1)\cap M$, with $v_0=\dot{\gamma}_0^{o,x_0}$.
The integral~\eqref{thmlasint} will be estimated from above by using geodesic polar coordinates
$(r,v)$ for points $x\in\Omega$. Here $r=r(x)\in [r_1,\infty)$ and $v=\dot{\gamma}_0^{o,x}$. 
Let $\lambda(r,v)$ be the Jacobian for these polar coordinates. We need to estimate $\lambda$ and the 
function $F$ from above. To this end, let $a,b\colon [0,\infty)\to [0,\infty)$ be smooth functions such 
that they are constant in some neighborhood of $0$, 
\[
-b^2\bigl(r(x)\bigr)\le K(P) \le -a^2\bigl(r(x)\bigr)
\]
for all $x\in C(v_0,\delta)$ and for all $2$-planes $P\subset T_x M$ containing the radial vector 
$\nabla r$, and that
\begin{align*}
a^2(t)&=\frac{1+\varepsilon}{t^{2}\log t},\\
b^2(t)&=\frac{\left(\log t\right)^{2\bar{\varepsilon}}}{t^2}
\end{align*}
for $t\ge R_0$. For $x\in \Omega$, we denote by $J(x)$ the supremum and by $j(x)$ the 
infimum of $|V(r(x))|$ over all Jacobi fields $V$ along $\gamma^{o,x}$ that satisfy $V_0=0$, 
$|V_0^\prime|=1$, and $V_0^\prime \bot \dot{\gamma}_0^{o,x}$. By applying the Rauch comparison 
theorem we get the estimates 
\begin{align}\label{eq-Jac-estim}
j(x)&\geq f_a (r(x));\\
J(x)&\leq f_b (r(x)),
\end{align}
where $f_a$ and $f_b$ are the solutions to corresponding Jacobi equations~\eqref{Jacobi_eq}; see e.g. 
\cite[Proposition 2.5]{HoVa}. Thus we have
\begin{equation}\label{eq-jacdet-est}
\lambda(r,v)\le f_b(r)^{n-1}
\end{equation}
for all points $x=(r,v)\in\Omega$. We also recall from \cite[Lemma 2]{Va1} that 
\begin{equation}\label{grad-theta-est}
|\nabla \theta (x)|\leq \frac{c}{j(x)}\leq \frac{c}{f_a (r(x))}
\end{equation}
in $\Omega$. It follows that there exists a constant $c_1$ such that
\begin{equation}\label{eq-integrand-est}
\frac{c_0 |\nabla \theta|r\log(1+r)}{L(r)}=
\frac{c_0 |\nabla \theta|r\log(1+r)}{\log(1+r)+\cC(r)}\le\frac{r}{c_1 f_a(r)}
\end{equation}
for all $r$ large enough.
Since the functions $\varphi$ and $F\in\cF_p$ were fixed so that $F$ satisfies \eqref{eq-F-upper}, 
we have in particular, $F\leq \tilde{F}$, where
\[
\tilde{F}(s)=\exp \left(-\frac{1}{s}\left(\log \frac{1}{s}\right)^{-1-\varepsilon_0}\right)
\]
for all $s$ small enough and $\varepsilon_0\in (0,1)$. 
In what follows, we assume that $t_0\ge R_1$ is a sufficiently large constant.
For $t\ge t_0$, we define
\begin{align*}
\Phi (t)&= \left(t^2 \tilde{F} \left(\frac{t}{c_1 f_a(t)}  \right)\right)^{\frac{1}{1-n}}\\
&= t^{-\frac{2}{n-1}} \exp \left(\frac{1}{n-1} \frac{c_1 f_a (t)}{t} \left(\log \frac{c_1 f_a (t)}{t} 
\right)^{-1-\varepsilon_0} \right),
\end{align*}
and thus
\begin{align*}
\frac{\Phi^\prime (t)}{\Phi (t)}
&= \frac{-2t + c_1 \left(1-(1+\varepsilon_0) \left(\log \frac{c_1 f_a (t)}{t}\right)^{-1} \right)
}{(n-1) t^2}\\
&\quad \times\bigl(t f_a^\prime (t) -f_a (t)\bigr)\left(\log \frac{c_1 f_a (t)}{t}\right)^{-1-\varepsilon_0} 
\end{align*}
Straightforward computations, using Proposition~\ref{jacest}, yield to
\[
\left(\frac{t f_a^\prime (t)}{f_a(t)}-1\right)\frac{f_a(t)}{t}
\ge (1+\tilde{\varepsilon})\bigl(\log t\bigr)^{\tilde{\varepsilon}}
\]
for all $t\ge R_1$. It follows that
\[
\frac{\Phi^\prime (t)}{\Phi (t)} \ge \frac{2\bigl(\log t\bigr)^{\bar{\varepsilon}}}{t}= 2b(t)
\]
for all $t\ge t_0$.
Since $b^\prime (t)/b(t)^2\to 0$ as $t\to \infty$, we obtain
\[
\lim_{t\to \infty} \frac{f_b^\prime (t)}{b(t) f_b (t)}=1
\]
by \cite[Lemma 2.3]{HoVa}.
Therefore we have
\[
\frac{\Phi^\prime (t)}{\Phi (t)}\geq 2 b(t) \geq \frac{f_b^\prime (t)}{f_b(t)}
\] 
for $t\ge t_0$. It follows that $\Phi (t)\geq cf_b(t)$, for all $t\ge t_0$.
Thus we have
\begin{align*}
&\quad\ F\left(\frac{c_0 |\nabla \theta(r,v)|r\log(1+r)}{L(r)}\right)L(r)\lambda(r,v)\\
&=F\left(\frac{c_0 |\nabla \theta(r,v)|r\log(1+r)}{\log(1+r)+\cC(r)}\right)\bigl(\log(1+r)+\cC(r)\bigr)
\lambda(r,v)\\
&\leq c
\tilde{F}\left(\frac{r}{c_1 f_a(r)}\right)
\bigl(\log(1+r)+\cC(r)\bigr)\Phi(r)^{n-1}\\
& = c\bigl(\log(1+r)+\cC(r)\bigr)r^{-2}
\end{align*} 
for all $x=(r,v)\in U\cap M$ outside a compact set. Since $\cC$ is a bounded function, this shows that \eqref{thmlasint} holds and therefore 
concludes the proof of Theorem \ref{thm-A-regu}.
\end{proof}

%%%%%%%%%%%%%%%%%
\section{Dirichlet problem at infinity for the minimal graph\\ equation}\label{asdirmin}
%%%%%%%%%%%%%%%
In this section we will prove Theorem~\ref{thmmin}. We will use a slightly different approach than 
the one adopted in the proof of Theorem~\ref{thmharm} but the main ingredients will be the same. 
However, to solve the Dirichlet problem at infinity for the minimal graph equation, some extra 
difficulties appear. The first one is the fact that the minimal graph operator does not satisfy 
\eqref{aharmstruc}. Therefore, we need to adapt the previous Caccioppoli inequality proved in 
Lemma~\ref{caccio}. The second difficulty is linked to the fact that it may not be possible, in general, 
to solve the minimal graph equation on the sets $\Omega_j$ as defined in the proof of 
Theorem~\ref{thmharm}. %Some convexity assumptions are needed. 

%%%%%%%%%%%%%%%
\subsection{Caccioppoli inequality and some consequences}
%%%%%%%%%%%%%%%%
We begin this section with the following Caccioppoli-type inequality. In what follows we use the customary 
notation $W(x)=\sqrt{1+|\nabla u(x)|^2}$ for a smooth solution $u$ of the minimal graph equation.
\begin{lemma}\label{newlem-mincaccio}
Suppose that $\Psi\colon [0,\infty)\to [0,\infty)$ is a homeomorphism that is smooth on $(0,\infty)$.
Let $U\Subset M$ be an open and relatively compact set. Suppose that $\eta \geq 0$ is a locally 
Lipschitz function on $U\setminus\{o\}$. Suppose that $\theta,u\in L^\infty (U)\cap W^{1,2}(U)$ are 
continuous functions and that $u\in C^2(U)$ is a solution of the minimal graph equation in $U$. Denote 
\[
h=\frac{|u-\theta|}{\nu},
\] 
where $\nu>0$ is a constant, and suppose that
\[
\eta^2 \Psi (h)W\in W^{1,2}_0 (U).
\]
Then we have
\begin{align}
\label{cacciomin}
\int_U \eta^2 \Psi^\prime (h)|\nabla u|^2
&\leq 4 \int_U \eta^2 \Psi^\prime (h)|\nabla \theta|^2
+8\nu^2 \int_U \frac{\Psi^2}{\Psi^\prime}(h) |\nabla \eta|^2\\
&\quad +4\nu^2\int_U \eta^2\frac {\Psi^2}{\Psi^\prime}(h)|\nabla\log W|^2.\nonumber
\end{align}
\end{lemma}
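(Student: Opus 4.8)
The plan is to mimic the proof of the $\cA$-harmonic Caccioppoli inequality (Lemma~\ref{caccio}) by testing the weak minimal graph equation \eqref{weak-Mequ} with the function
\[
\varphi=\eta^2\Psi(h)W\,\sgn(u-\theta)=\eta^2\Psi\bigl((u-\theta)^+/\nu\bigr)W-\eta^2\Psi\bigl((u-\theta)^-/\nu\bigr)W,
\]
which lies in $\mathring W^{1,2}_0(U)$ by hypothesis. The key difference from the $\cA$-harmonic case is that the operator $\nabla u/W$ is not $p$-homogeneous, so the extra factor $W$ must be carried through the test function, and differentiating it produces the term $|\nabla\log W|^2$ that appears on the right-hand side of \eqref{cacciomin}. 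First I would compute $\nabla\varphi$ by the product rule (on the set where $u\neq\theta$; on the complement the integrand vanishes a.e.), obtaining three groups of terms: one proportional to $\eta^2\Psi'(h)W\nabla(u-\theta)/\nu$, one to $\eta\Psi(h)W\nabla\eta$, and one to $\eta^2\Psi(h)\nabla W$. Pairing with $\nabla u/W$ and using $\langle\nabla u,\nabla u\rangle=|\nabla u|^2$, the first group yields the good term $\nu^{-1}\int_U\eta^2\Psi'(h)|\nabla u|^2/W$ minus a cross term $\nu^{-1}\int_U\eta^2\Psi'(h)\langle\nabla u,\nabla\theta\rangle/W$.

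The main obstacle — and the reason the constants in \eqref{cacciomin} are worse than in \eqref{ineq-caccio} — is the presence of the weight $1/W$ on the good term: one only controls $\int_U\eta^2\Psi'(h)|\nabla u|^2/W$, not $\int_U\eta^2\Psi'(h)|\nabla u|^2$. To pass from the former to the latter I would split according to whether $|\nabla u|\le 1$ or $|\nabla u|>1$: on $\{|\nabla u|\le1\}$ one has $|\nabla u|^2\le 1\le\sqrt2\,W^{-1}\cdot$(something harmless) — more precisely $|\nabla u|^2\le W\le\sqrt2$ is not quite what is needed, so instead use that on $\{|\nabla u|\le1\}$, $|\nabla u|^2\le 2|\nabla u|^2/W$ since $W\le\sqrt2\le 2$; on $\{|\nabla u|>1\}$, $W\le\sqrt2\,|\nabla u|$, hence $|\nabla u|^2/W\ge|\nabla u|/\sqrt2\ge|\nabla u|^2/(\sqrt2\,|\nabla u|_{\max})$ is again not uniform, so the clean route is: $|\nabla u|^2=W^2-1\le W^2$, and $W^2/W=W$, giving $\int\eta^2\Psi'(h)|\nabla u|^2\le\int\eta^2\Psi'(h)W^2$; meanwhile the good term is $\int\eta^2\Psi'(h)|\nabla u|^2/W=\int\eta^2\Psi'(h)(W^2-1)/W=\int\eta^2\Psi'(h)(W-W^{-1})$. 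Comparing, $W^2\le 2(W-W^{-1})W+2$ fails in general too. The correct and standard device is: $|\nabla u|^2\le 2|\nabla u|^2/W + 2$ pointwise whenever $W\ge\ldots$; in fact since $|\nabla u|^2/W=|\nabla u|^2/\sqrt{1+|\nabla u|^2}\ge|\nabla u|^2/(1+|\nabla u|)\ge|\nabla u|-1$, one gets $|\nabla u|\le|\nabla u|^2/W+1$, hence $|\nabla u|^2\le\bigl(|\nabla u|^2/W+1\bigr)^2\cdot$— still quadratic. I would therefore instead argue directly with the weighted good term throughout the subsequent Moser iteration, or note $|\nabla u|^2\le\sqrt{2}\,|\nabla u|^2/W+\sqrt2$ does hold since $W\le\sqrt{1+|\nabla u|^2}\le 1+|\nabla u|\le\sqrt2\max(1,|\nabla u|)$, so $|\nabla u|^2/W\ge|\nabla u|^2/(\sqrt2\max(1,|\nabla u|))$, which on $\{|\nabla u|\ge1\}$ gives $|\nabla u|/\sqrt2$. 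Concretely: on $\{|\nabla u|\le1\}$ the term $\int\eta^2\Psi'(h)|\nabla u|^2$ is bounded by $\int\eta^2\Psi'(h)$ which can be absorbed; on $\{|\nabla u|>1\}$, $|\nabla u|^2/W\ge|\nabla u|^2/(\sqrt2\,|\nabla u|)=|\nabla u|/\sqrt2\ge|\nabla u|^2/(\sqrt2\,|\nabla u|)$ — loop again. The honest statement: I expect the factor $W$ to be absorbed by using $|\nabla u|^2\le W^2$ on the good side cost-free only after the three error terms (involving $|\nabla\theta|$, $|\nabla\eta|$, $|\nabla\log W|$) are estimated by Cauchy–Schwarz and Young's inequality $ab\le\tfrac14 a^2+b^2$ with the numerical constants $4,8,4$ chosen to let the $|\nabla u|^2/W$ pieces hidden inside the cross terms be reabsorbed into the left side.

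For the error terms: the cross term $\nu^{-1}\int_U\eta^2\Psi'(h)\langle\nabla u,\nabla\theta\rangle/W$ is handled by Cauchy–Schwarz and $ab\le\varepsilon a^2+\varepsilon^{-1}b^2$, producing a piece $\le\tfrac14\nu^{-1}\int\eta^2\Psi'(h)|\nabla u|^2/W$ to be reabsorbed plus $\le C\nu^{-1}\int\eta^2\Psi'(h)|\nabla\theta|^2/W\le C\nu^{-1}\int\eta^2\Psi'(h)|\nabla\theta|^2$; the $\nabla\eta$ term gives, after pairing $\langle\nabla u/W,2\eta\nabla\eta\Psi(h)W\rangle=2\eta\Psi(h)\langle\nabla u,\nabla\eta\rangle$ and Young's inequality with weight $\Psi'(h)$ in the denominator, a reabsorbable $|\nabla u|^2/W$ piece plus $C\nu^2\int\Psi^2(h)/\Psi'(h)\,|\nabla\eta|^2$; and the $\nabla W$ term gives $\eta^2\Psi(h)\langle\nabla u/W,\nabla W\rangle=\eta^2\Psi(h)\langle\nabla u,\nabla\log W\rangle$, treated identically to produce the $4\nu^2\int\eta^2\Psi^2(h)/\Psi'(h)\,|\nabla\log W|^2$ term. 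Multiplying through by $\nu$, collecting the reabsorbed fractions of the good term on the left (they sum to at most, say, $\tfrac34$ of it), and finally invoking $|\nabla u|^2\le W^2$ — equivalently, noting $\int\eta^2\Psi'(h)|\nabla u|^2\le\int\eta^2\Psi'(h)|\nabla u|^2$ trivially so the $1/W$-weighted good term is only used for reabsorption while the stated left-hand side $\int\eta^2\Psi'(h)|\nabla u|^2$ is recovered by the pointwise bound $|\nabla u|^2/W\ge|\nabla u|^2/\sqrt{1+|\nabla u|^2}$ on the relevant range — yields \eqref{cacciomin} with the asserted constants $4$, $8$, $4$. I would present the details in the order: (i) justify the test function and compute $\nabla\varphi$; (ii) expand $\int_U\langle\nabla u/W,\nabla\varphi\rangle=0$ into the good term and three error terms; (iii) estimate each error term by Cauchy–Schwarz plus Young; (iv) reabsorb and conclude.
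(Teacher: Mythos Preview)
Your test function and overall strategy are exactly those of the paper. The problem is a computational slip in the very first pairing, and everything that follows (the long paragraph about ``the main obstacle'', the case splits on $|\nabla u|\le 1$ versus $|\nabla u|>1$, the various failed inequalities) is an attempt to repair an error that is not actually there.

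You correctly write the first group of $\nabla\varphi$ as proportional to $\eta^2\Psi'(h)\,W\,\nabla(u-\theta)/\nu$, with the factor $W$ present. But then when you pair with $\nabla u/W$ you claim the outcome is $\nu^{-1}\int_U\eta^2\Psi'(h)|\nabla u|^2/W$. That is wrong: the $W$ in the test function cancels the $1/W$ in the operator, and the good term is simply
\[
\frac{1}{\nu}\int_U\eta^2\Psi'(h)\,|\nabla u|^2
\]
with no weight. This cancellation is precisely why one inserts the factor $W$ into the test function in the first place. Once you see this, there is no ``obstacle'' at all: the three error terms are
\[
\frac{1}{\nu}\int_U\eta^2\Psi'(h)\langle\nabla u,\nabla\theta\rangle,\quad
2\int_U\eta\Psi(h)\langle\nabla u,\nabla\eta\rangle,\quad
\int_U\eta^2\Psi(h)\langle\nabla u,\nabla\log W\rangle,
\]
each with a bare $|\nabla u|$ (no $W$ anywhere), and each is handled by the elementary Young inequality $ab\le \tfrac{\varepsilon}{2}a^2+\tfrac{1}{2\varepsilon}b^2$ with $a=\eta\Psi'(h)^{1/2}|\nabla u|$. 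Choosing $\varepsilon=1/4$ so that the three reabsorbed pieces sum to one half of the good term gives the constants $4$, $8$, $4$ directly, exactly as stated. Delete the entire second paragraph of your proposal and the argument goes through in a few lines.
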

\begin{proof}
We begin by defining 
\[
f=\nu\eta^2 \Psi \left((u-\theta)^+ /\nu \right)W
-\nu \eta^2 \Psi \left((u-\theta)^- /\nu\right)W.
\]
It is easy to see that $f\in W^{1,2}_0(U)$ and its gradient is given by
\begin{align*}
\nabla f &=\eta^2 \Psi^\prime (h) W (\nabla u-\nabla\theta) + 
2\nu\eta\sgn(u-\theta) \Psi (h) W\nabla \eta\\
&\quad +\nu\eta^2\sgn(u-\theta)\Psi(h)\nabla W.
\end{align*}
Using $f$ as a test function in the minimal graph equation, we obtain 
that
\begin{align*}
\int_U  \eta^2 \Psi^\prime (h)|\nabla u|^2
&= \int_U \eta^2 \Psi^\prime (h)\langle\nabla u,\nabla\theta\rangle
-2\nu\int_U \sgn(u-\theta)\eta\Psi(h)\langle\nabla u,\nabla\eta\rangle\\
&\quad-\nu\int_U \sgn(u-\theta)\eta^2\Psi(h)\langle\nabla \log W,\nabla u\rangle\\
&\leq  \int_U \eta^2\Psi^\prime (h)|\nabla u||\nabla\theta| 
+2\nu\int_U \eta\Psi(h)|\nabla u||\nabla\eta| \\
&\quad+\nu\int_U\eta^2 \Psi(h)|\nabla u||\nabla\log W|.
\end{align*}
We estimate the terms on the right-side by Young's inequality with $\varepsilon$ as 
\begin{align*}
&\int_U \eta^2\Psi^\prime (h)|\nabla u||\nabla\theta|  \le 
\varepsilon/2 \int_U \eta^2 \Psi^\prime (h)|\nabla u|^2
+ 1/(2\varepsilon)\int_U \eta^2 \Psi^\prime (h)|\nabla \theta|^2,\\
& 2\nu\int_U \eta\Psi(h)|\nabla u||\nabla\eta|
\le\varepsilon \int_U \eta^2 \Psi^\prime (h)|\nabla u|^2 +
\nu^2 /\varepsilon  \int_U \frac{\Psi^2}{\Psi^\prime}(h) |\nabla \eta|^2,
\end{align*}
and
\begin{align*}
\nu\int_U\eta^2 \Psi(h)|\nabla u||\nabla\log W| 
&\le \varepsilon/2\int_U \eta^2\Psi^{\prime}(h)|\nabla u|^2\\
&\quad +
\nu^2/(2\varepsilon)\int_U \eta^2\frac {\Psi^2}{\Psi^\prime}(h)|\nabla\log W|^2.
\end{align*}
Choosing $\varepsilon=1/4$ above proves the claim.
\end{proof}
\begin{remark}
As can be seen later in the proof of Lemma~\ref{2ndmainlemma}, the second term 
\[
8\nu^2 \int_U \frac{\Psi^2}{\Psi^\prime}(h) |\nabla \eta|^2
\]
on the right-side of \eqref{cacciomin} is the only term that affects the dimension restriction
$n\ge 3$ in Theorem~\ref{thmmin}. One could improve the factor $8\nu^2$ to $(4+\epsilon)\nu^2$ for any 
$\epsilon>0$ but, nevertheless, the dimension bound $n\ge 3$ still remains.
\end{remark}
Before we state and prove a counterpart of Lemma~\ref{moserite} for the minimal graph equation,
we recall from \ref{subsec_young} that $\varphi\colon [0,\infty)\to [0,\infty)$ is a homeomorphism, smooth 
on $(0,\infty)$, and satisfies \eqref{relvarphi1}, i.e.
\[
G \circ \varphi^\prime =\varphi,
\]
where the homeomorphic Young function $G\colon [0,\infty)\to [0,\infty)$ is, in particular, convex. Hence there exist 
positive constants $t_1$ and $c_2$ such that
\begin{equation}\label{good-varphi-prop}
\varphi(t)\le 1,\quad \varphi^\prime(t)\le 1,\quad \text{and}\quad \varphi(t)\le c_2\varphi^\prime (t) 
\end{equation}
for all $t\in (0,t_1]$.
\begin{lemma}
\label{moseritemin}
Let $\Omega=B(o,R)$ and suppose that $\theta\in C^1(\Omega)$ with $\norm{\theta}_{C^1(\Omega)}\leq C_1$. 
Let $u\in C^2(\Omega)$ be a solution of the minimal graph equation in $\Omega$ such that
$\inf_{\Omega}\theta\le u\le \sup_{\Omega}\theta$ and $|\nabla\log W|\leq C_1$. Fix $s\in (0, r_S)$, where $r_S$ is the radius in the Sobolev inequality~\eqref{ineq-sobo},  
and suppose that $B=B(x,s)\subset\Omega$. Then there exists a positive constant 
$\nu_0=\nu_0(\varphi,C_1)$ such that for all $\nu\ge\nu_0$
\[
\sup_{B(x,s/2)} \varphi\left(|u-\theta|/\nu\right)^{2(n+1)}
\leq c_3 \int_{B} \varphi\left(|u-\theta|/\nu\right)^2, 
\]
where $c_3$ is a positive constant depending only on 
$n, \nu, s, C_S, C_1$ and $\varphi$.
\end{lemma}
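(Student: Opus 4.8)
The plan is to imitate the Moser iteration behind Lemma~\ref{moserite} --- i.e. \cite[Lemma 2.20]{Va2} in the case $p=2$ --- replacing the $\cA$-harmonic Caccioppoli inequality \eqref{ineq-caccio} by the minimal graph Caccioppoli inequality \eqref{cacciomin}. Two features of \eqref{cacciomin} are new. The weight $W$ is already incorporated into \eqref{cacciomin}, so it produces nothing extra at this stage. The remaining novelty is the term $4\nu^2\int_U\eta^2\frac{\Psi^2}{\Psi'}(h)|\nabla\log W|^2$: by the hypothesis $|\nabla\log W|\le C_1$ it is at most $4C_1^2\nu^2\int_U\eta^2\frac{\Psi^2}{\Psi'}(h)$, which has exactly the shape of the term $8\nu^2\int_U\frac{\Psi^2}{\Psi'}(h)|\nabla\eta|^2$ already present in \eqref{cacciomin} but with the cutoff gradient $|\nabla\eta|^2$ replaced by the fixed constant $C_1^2$; it therefore enters the iteration only as a bounded zeroth-order contribution and leaves the scheme structurally unchanged. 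In particular the dimension restriction $n\ge 3$ of Theorem~\ref{thmmin} plays no role here; it will enter only in the later analogue of Lemma~\ref{mainlemma}.

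First I would fix $\nu_0$. Since $\inf_\Omega\theta\le u\le\sup_\Omega\theta$ and $\norm{\theta}_{C^1(\Omega)}\le C_1$, the oscillation of $u-\theta$ over $\Omega$ is at most $2C_1$. Fix $\delta=1$; let $t_1,c_2$ be the constants of \eqref{good-varphi-prop}, let $t_\delta$ be such that \eqref{(2.18)} and \eqref{(2.19)} hold on $(0,t_\delta]$, and set $t_*=\min(t_1,t_\delta)$ --- all depending on $\varphi$ only. Put $\nu_0=2C_1/t_*$. Then for every $\nu\ge\nu_0$ the function $h=|u-\theta|/\nu$ takes its values in $(0,t_*]$, where $\varphi(h)\le\varphi(t_*)\le 1$, $\varphi'(h)\le 1$, $\varphi(h)\le c_2\varphi'(h)$, and the comparisons of \eqref{(2.18)} and \eqref{(2.19)} among $\varphi'$, $\psi=(\varphi')^{p-1}\varphi$ and $\varphi$ (with $p=2$) are available. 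This is exactly the range in which the algebraic identities of \cite{Va2} are valid, and $\nu_0$ depends only on $\varphi$ and $C_1$.

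I would then run the iteration as in \cite[Lemma 2.20]{Va2}: set $\gamma_j=(n/(n-1))^j$ and $s_j=\tfrac s2(1+2^{-j})$, choose Lipschitz cutoffs $\eta_j$ with $\eta_j\equiv 1$ on $B(x,s_{j+1})$, $\supp\eta_j\subset B(x,s_j)$ and $|\nabla\eta_j|\le C2^j/s$, and apply \eqref{cacciomin} on $B=B(x,s)$ with $\eta=\eta_j$ and with $\Psi=\Psi_j$ chosen (as in \cite{Va2}) so that $\Psi_j'(h)|\nabla u|^2$ dominates $|\nabla(\varphi(h)^{\gamma_j})|^2$ up to a gradient-of-$\theta$ error and $\Psi_j^2/\Psi_j'(h)$ is comparable to $\varphi(h)^{2\gamma_j}$; one then inserts the result into the local Sobolev inequality \eqref{ineq-sobo} applied to $\eta_j^2\varphi(h)^{2\gamma_j}$ and iterates over $j$, arriving at the desired bound with $c_3=c_3(n,s,C_S,C_1,\varphi)$. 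Once any inequality of this type with a positive power on the left is available, the precise exponent $2(n+1)$ is immaterial, since $\varphi(h)\le 1$ gives $\varphi(h)^{2(n+1)}\le\varphi(h)^2$ pointwise.

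The part that needs genuine care --- and the only genuinely new verification compared with the $\cA$-harmonic case --- is the bookkeeping of $\nu$. One uses $|\nabla u|\le\nu|\nabla h|+|\nabla\theta|$ together with $|\nabla\theta|\le C_1$ to pass from $|\nabla u|^2$ to $|\nabla h|^2$ on the left-hand side of \eqref{cacciomin}; after this the factor $\nu^2$ in front of the $|\nabla\eta|^2$- and $|\nabla\log W|^2$-terms is cancelled by the $\nu^{-2}$ thereby produced, and, using $\nu\ge\nu_0$ to absorb remainders such as $C_1^2/\nu^2$, all surviving constants depend only on $\varphi$, $C_1$, $n$, $C_S$ and $s$, so that $c_3$ is independent of $u$, $\nu$ and $\Omega$. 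In particular the new term contributes, after the rescaling, only $\le CC_1^2\int_U\eta_j^2\varphi(h)^{2\gamma_j}$ (using $\Psi_j^2/\Psi_j'(h)\le C\varphi(h)^{2\gamma_j}$ from \eqref{(2.19)}), which is absorbed on the right-hand side of the $j$-th Moser step alongside the $|\nabla\eta_j|^2$-contribution; no further difficulty arises.
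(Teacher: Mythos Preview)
Your approach is essentially the same as the paper's: fix $\nu_0$ so that $h=|u-\theta|/\nu$ lies in the range where \eqref{good-varphi-prop} holds, then run a standard Moser iteration combining the $L^1$-Sobolev inequality \eqref{ineq-sobo} with the Caccioppoli inequality \eqref{cacciomin} applied with $\Psi=\Phi^m=\varphi^{2m}$, and handle the new $|\nabla\log W|^2$-term via the uniform bound $C_1$. The paper's radii are $s_j=s(1+\kappa^{-j})/2$ with $\kappa=n/(n-1)$ and exponents $m_j=(n+1)\kappa^j-n$, but your choices would work equally well.

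One technical point deserves correction. Because \eqref{ineq-sobo} is an $L^1$-Sobolev inequality, applying it to $\eta_j^2\Phi(h)^m$ produces the \emph{first} power $|\nabla h|$ on the right, hence (after splitting) a factor $C_S/\nu$ times $\int\eta_j^2(\Phi^m)'(h)|\nabla u|$. The paper bridges this linear-in-gradient term with the quadratic left side of \eqref{cacciomin} by the simple pointwise trick $|\nabla u|\le 1+|\nabla u|^2$; it does \emph{not} pass through $|\nabla h|^2$. Consequently the prefactor produced is $\nu^{-1}$, not $\nu^{-2}$, so the $\nu^2$ in front of the $|\nabla\eta|^2$- and $|\nabla\log W|^2$-terms is not fully cancelled: a single factor of $\nu$ survives in the iteration constant. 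This is harmless---in practice one fixes $\nu$ (at $\nu_0$ or at the constant $c_4$ of Lemma~\ref{2ndmainlemma}), and the resulting $c_3$ then depends only on the stated data---but your explanation of the $\nu$-bookkeeping via a $\nu^{-2}$ cancellation does not match what actually happens with the $L^1$-Sobolev inequality you invoke. (Also, the inequality you wrote, $|\nabla u|\le\nu|\nabla h|+|\nabla\theta|$, points the wrong way for the purpose you state; the relevant direction is $\nu|\nabla h|\le|\nabla u|+|\nabla\theta|$.)
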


\begin{proof}
We denote $\kappa=n/(n-1),\ B/2=B(x,s/2)$, and $h=|u-\theta|/\nu$, where $\nu\ge\nu_0>0$ will be fixed 
in due course. 
%We can assume that $\Omega\cap B\ne\emptyset$. 
For each $j\in\N$ we denote $s_j=s(1+\kappa^{-j})/2$ and $B_j=B(x,s_j)$. Furthermore, let $\eta_j$ be a 
Lipschitz function such that
$0\le\eta_j \le 1,\ \eta_j|B_{j+1}\equiv 1,\ \eta_j|(M\setminus B_j)\equiv 0$, and that
\[
|\nabla\eta_j|\le \frac{1}{s_j-s_{j+1}}=2n\kappa^j/s.
\]
For $\Phi=\varphi^2$ and $m\ge 1$ we have 
\[
\bigl|\nabla\bigl(\eta_j^2\Phi(h)^m\bigr)\bigr| \le 
2\eta_j\Phi(h)^m|\nabla\eta_j| + m\eta_j^2\Phi^\prime(h)\Phi(h)^{m-1}|\nabla h|.
\]
We claim that
\begin{equation}\label{moser-claim1}
\left(\int_{B_{j+1}}\Phi(h)^{\kappa m}\right)^{1/\kappa}
\le c(\kappa^j + m + \kappa^{2j} /m ) \int_{B_j}\Phi(h)^{m-1}
\end{equation}
for all $\nu\ge\nu_0$, with $\nu_0=\nu_0(\varphi,C_1)$ large enough.
For every $m,j\ge 1$, $\eta_j^2\Phi(h)^m$ is a Lipschitz function supported in $\bar{B_j}$. 
By the Sobolev inequality~\eqref{ineq-sobo} 
we first have
\begin{align}\nonumber
\biggl(\int_{B_{j+1}} \Phi(h)^{\kappa m}\biggr)^{1/\kappa} &\le
\left(\int_{B_j}\left(\eta_j^2 \Phi(h)^m\right)^\kappa\right)^{1/\kappa}
\le C_S\int_{B_j}\bigl|\nabla\bigl(\eta_j^2 \Phi(h)^m\bigr)\bigr|\\
&\le 2C_S\int_{B_j}\eta_j\Phi(h)^m|\nabla\eta_j| + 
C_S\int_{B_j}\eta_j^2\bigl(\Phi^m\bigr)^\prime (h)|\nabla h|\nonumber \\
&\le c\kappa^j\int_{B_j}\Phi(h)^m + 
C_S/\nu\int_{B_j}\bigl(\Phi^m\bigr)^\prime (h)|\nabla\theta | \label{moser1step}\\
&\quad +
C_S/\nu\int_{B_j}\eta_j^2\bigl(\Phi^m\bigr)^\prime (h)|\nabla u|\nonumber. 
\end{align}
Next we use the assumption
\[
-C_1\le\inf_{\Omega}\theta\le u\le\sup_{\Omega}\theta\le C_1
\]
to observe that $|u-\theta|\le 2C_1$. Hence, by \eqref{good-varphi-prop}, we can choose 
$\nu_0$ large enough so that 
\[
\varphi(h)\le 1,\quad \varphi^\prime(h)\le 1,\quad \text{and}\quad \varphi(h)\le c_2\varphi^\prime (h) 
\]
for $\nu\ge\nu_0$. Consequently,
\begin{equation}\label{phiphi}
\Phi(h)\le 1,\quad \Phi^\prime (h)\le 2,\quad \text{and}\quad \Phi(h)\le \tfrac{c_2}{2}\Phi^\prime (h).
\end{equation}
We obtain estimates
\begin{equation}\label{1st-term}
\int_{B_j}\Phi(h)^m \le \int_{B_j}\Phi(h)^{m-1}
\end{equation}
and
\begin{equation}\label{2nd-term}
\int_{B_j}\bigl(\Phi^m\bigr)^\prime (h)|\nabla\theta |
=m\int_{B_j}\Phi(h)^{m-1}\Phi^\prime(h)|\nabla\theta|\le 2m C_1\int_{B_j}\Phi(h)^{m-1}.
\end{equation}
We estimate the third term on the right-side of \eqref{moser1step} first as
\begin{align}\label{3rd-term_a}
\int_{B_j}\eta_j^2\bigl(\Phi^m\bigr)^\prime (h)|\nabla u|
&\le \int_{B_j}\eta_j^2\bigl(\Phi^m\bigr)^\prime (h)(1+|\nabla u|^2)\\
&\le 2m\int_{B_j}\Phi (h)^{m-1} + 
\int_{B_j}\eta_j^2\bigl(\Phi^m\bigr)^\prime (h)|\nabla u|^2.\nonumber
\end{align}
Next we notice that $\eta_j^2\Phi (h)^m W\in W^{1,2}_0(B_j)$ since 
$\supp\eta_j\subset\bar{B_j}$. Thus we may apply 
the Caccioppoli-type inequality~\eqref{cacciomin} with $\Psi=\Phi^m$
to obtain
\begin{align}
\int_{B_j}\eta_j^2\bigl(\Phi^m\bigr)^\prime (h)|\nabla u|^2
&\leq 4 \int_{B_j} \eta_j^2(\Phi^m)^\prime (h)|\nabla \theta|^2
+8\nu^2 \int_{B_j} \frac{\Phi^{2m}}{(\Phi^m)^\prime}(h) |\nabla \eta_j|^2\nonumber \\
&+4\nu^2\int_{B_j} \eta_j^2\frac {\Phi^{2m}}{(\Phi^m)^\prime}(h)|\nabla\log W|^2 \nonumber\\
&\leq c(m+\kappa^{2j}/m +1/m)\int_{B_j}\Phi(h)^{m-1}.\label{3rd-term2}
\end{align}
Now the estimate~\eqref{moser-claim1} follows by inserting estimates \eqref{1st-term}--\eqref{3rd-term2} into \eqref{moser1step}. We apply \eqref{moser-claim1} with $m=m_j+1$, where $m_j=(n+1)\kappa^j-n$. 
Since $m_{j+1}=\kappa (m_j+1)$, \eqref{moser-claim1} takes the form
\[
\left(\int_{B_{j+1}}\Phi (h)^{m_{j+1}}\right)^{1/\kappa}\leq C\kappa^j \int_{B_j} \Phi (h)^{m_j}.
\]
By denoting 
\[
I_j=\left(\int_{B_j} \Phi (h)^{m_j} \right)^{1/\kappa^j},
\] 
we can write the previous inequality as 
\[
I_{j+1}\leq C^{1/\kappa^j}\kappa^{j/\kappa^j}I_j.
\]
Since
\[
\limsup_{j\to \infty} I_j\geq \lim_{j\to \infty} 
\left(\int_{B/2} \Phi(h)^{m_j} \right)^{(n+1)/m_j}
=\sup_{B/2}\Phi (h)^{n+1},
\]
we finally get
\[
\sup_{B/2}\Phi (h)^{n+1}\leq \limsup_{j\to \infty} I_j\leq 
C^{n} \kappa^{S}I_0
\leq c\int_B  \Phi (h), 
\]
where
\[
S=\sum_{j=0}^\infty j\kappa^{-j}<\infty.
\]
\end{proof}
Next we will prove the counterpart of Lemma \ref{mainlemma}. We point out that some extra difficulties 
will appear due to the presence of $|\nabla\log W|$ in the right-side of the Caccioppoli 
inequality \eqref{cacciomin}. 
Moreover, we have to assume that the dimension of $M$ is at least $3$.
Let us recall the definitions of the bounded $C^1$-function 
$\cC\colon [0,\infty)\to [0,\infty)$ from \eqref{C-defn} and \eqref{E-defn} and functions
\[
L(r)=\log(1+r)+\cC(r)
\]
and
\[
w = \frac{r\log(1+r)}{\sqrt{L(r)}}.
\]
from \eqref{L-defn} and \eqref{w-defn} (with $p=2$), respectively.
\begin{lemma}\label{2ndmainlemma}
Let $M$ be a Cartan-Hadamard manifold of dimension $n\ge 3$. Suppose that 
\[
K(P)\leq-\frac{1+\varepsilon}{r(x)^{2}\log r(x)},
\]
for some constant $\varepsilon>0,$ where $K(P)$ is the sectional curvature of any plane $P\subset T_{x}M$
that contains the radial vector $\nabla r(x)$  
and $x$ is any point in $M\setminus B(o,R_0)$. Fix 
$\tilde{\varepsilon}\in (0,\varepsilon)$ and let $R_1\ge R_0$ be 
given by Proposition~\ref{jacest}. Let $U=B(o,R)$, with $R>R_1$, and suppose that $u\in C^{2}(\bar{U})$ 
is the unique solution of the minimal graph equation \eqref{Mequ} in $U$, with 
$u|\partial U=\theta|\partial U$, where $\theta\in C^\infty(M)$, with $\norm{\theta}_{\infty}\!\le\! C$. 
Furthermore, suppose that $|\nabla\log W(x)|\!\le\!\cW\bigl(r(x)\bigr)$, where 
$\cW\colon [0,\infty)\to [0,\infty)$ is a continuous function that is independent of $u$, and 
$\cW(r)=o(1/r)$ as $r\to\infty$.
Then there exists a constant $c_4\ge 1$ that is independent of $u$ such that 
\begin{equation}
\label{2ndphiFest}
\int_U \varphi\bigl(|u-\theta|/c_4\bigr)^2  L(r) 
\leq c_4+ c_4\int_U F\left(\frac{c_4 |\nabla \theta|r\log(1+r)}{L(r)}\right)L(r).
\end{equation}
\end{lemma}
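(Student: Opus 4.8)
The plan is to imitate the proof of Lemma~\ref{mainlemma} with $p=2$, the only structural change being that the Caccioppoli inequality \eqref{ineq-caccio} is replaced by its minimal-graph counterpart \eqref{cacciomin}, which carries the extra term $4\nu^2\int_U\eta^2\tfrac{\Psi^2}{\Psi'}(h)|\nabla\log W|^2$; the hypothesis $\cW(r)=o(1/r)$ will be exactly what is needed to make this term harmless. First I would fix a small $\delta>0$, set $h=|u-\theta|/c_4$, and choose $c_4\ge1$ so large that $\norm{h}_\infty\le t_\delta$, which is possible because $\norm{\theta}_\infty\le C$ and $\inf_U\theta\le u\le\sup_U\theta$ by comparison with constant solutions of \eqref{Mequ}. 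Exactly as in Lemma~\ref{mainlemma}, the Laplace comparison theorem and Proposition~\ref{jacest} give \eqref{estlaplacian}, and integrating by parts against $\nabla r$ with the function $\varphi(h)^2 r\log(1+r)$, followed by H\"older's inequality, yields
\[
n\Bigl(\int_U\varphi(h)^2 L(r)\Bigr)^{1/2}\le 2\Bigl(\int_U|\nabla h|^2\varphi'(h)^2 w^2\Bigr)^{1/2},
\]
which is \eqref{eqintlem2} for $p=2$.

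Next I would run the same chain of estimates: bound $|\nabla h|\le(|\nabla u|+|\nabla\theta|)/c_4$, apply Minkowski's inequality and \eqref{(2.18)} to pass from $\varphi'(h)^2$ to $\psi'(h)$, and thus reduce to estimating $\bigl(\int_U w^2\psi'(h)|\nabla u|^2\bigr)^{1/2}$. To this last quantity I would apply \eqref{cacciomin} with $\Psi=\psi$, $\eta=w$, $\nu=c_4$; the hypothesis $w^2\psi(h)W\in W^{1,2}_0(U)$ holds because $U$ is a ball, $u\in C^2(\bar U)$ with $u=\theta$ on $\partial U$, and $w$ is Lipschitz on $\bar U$ with $|\nabla w|$ bounded near $o$ by \eqref{eq-gradw-est2}. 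Using \eqref{(2.19)} to bound $\psi^2/\psi'\le(1+\delta)^2\varphi^2/2$, the term $8c_4^2\int_U\tfrac{\psi^2}{\psi'}(h)|\nabla w|^2$ is handled exactly as in Lemma~\ref{mainlemma} via the gradient bounds \eqref{eq-gradw-est} and \eqref{eq-gradw-est2}, contributing at most $4c_4^2(1+\delta)^2\int_U\varphi(h)^2 L(r)$ plus a $c_4$-independent bounded constant. The genuinely new term $4c_4^2\int_U w^2\tfrac{\psi^2}{\psi'}(h)|\nabla\log W|^2\le2c_4^2(1+\delta)^2\int_U w^2\varphi(h)^2\cW(r)^2$ is controlled by noting that $\cC$ is bounded, so $L(r)\asymp\log(1+r)$ and hence $w^2\cW(r)^2=\tfrac{\log^2(1+r)}{L(r)}\bigl(r\cW(r)\bigr)^2\le\tau L(r)$ for any prescribed $\tau>0$ once $r\ge R_2$ (because $r\cW(r)\to0$), while on $B(o,R_2)$ it is bounded; thus $\int_U w^2\varphi(h)^2\cW(r)^2\le\tau\int_U\varphi(h)^2 L(r)+C$.

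Collecting these estimates, and exploiting the cancellation of the two powers of $c_4$ (the $1/c_4$ from the Minkowski step against the $c_4$ in the gradient term of \eqref{cacciomin}) just as in Lemma~\ref{mainlemma}, I would arrive at an inequality of the form
\[
n\Bigl(\int_U\varphi(h)^2 L(r)\Bigr)^{1/2}\le 2\sqrt{2}\,(1+\delta)^2\sqrt{1+\tau/2}\,\Bigl(\int_U\varphi(h)^2 L(r)\Bigr)^{1/2}+(\text{terms in }\nabla\theta)+C,
\]
with all constants independent of $c_4$. Since $2\sqrt{2}>2$, whenever $n\ge3$ one can choose $\delta$ and $\tau$ small enough that the leading coefficient is strictly less than $n$, and the term absorbs; this is the only place the dimension restriction enters, and it traces back to the factor $8\nu^2$ in \eqref{cacciomin} (improving it to $(4+\epsilon)\nu^2$ would give a coefficient arbitrarily close to $2$, still requiring $n\ge3$ since $n$ is an integer), as noted in the remark after Lemma~\ref{newlem-mincaccio}. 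After absorption, using \eqref{(2.18)} once more on the $\nabla\theta$-terms, one obtains the analog of \eqref{eq-phiphiprime}, namely $\int_U\varphi(h)^2 L(r)\le C\int_U\varphi'(h)^2|\nabla\theta|^2 w^2+C_0$. Finally, writing $w^2=L(r)\bigl(r\log(1+r)/L(r)\bigr)^2$ and applying the Young inequality \eqref{youngineq} for the complementary pair $G(\cdot^{1/2})^2,\,F(\cdot^{1/2})$ together with $G\circ\varphi'=\varphi$ from \eqref{relvarphi1}, the right-hand side splits into $Ck\int_U\varphi(h)^2 L(r)$ plus $Ck\int_U F\bigl(k^{-1/2}|\nabla\theta|r\log(1+r)/L(r)\bigr)L(r)$; a last absorption with $k$ small, and a final choice of $c_4$ large (so that moreover $c_4\ge k^{-1/2}$ and the residual additive constant is dominated by $c_4$), gives \eqref{2ndphiFest}.

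The step I expect to be the main obstacle is the handling of the new term $4c_4^2\int_U w^2\tfrac{\psi^2}{\psi'}(h)|\nabla\log W|^2$: it has to be absorbed into $\int_U\varphi(h)^2 L(r)$ with an arbitrarily small coefficient, which is precisely why the decay $\cW(r)=o(1/r)$ is imposed, and the total absorption budget is so tight that the $2\sqrt{2}$ above is what forces $n\ge3$. A secondary, purely technical point is the order in which the constants are fixed — $\delta$, $\tau$, $k$ depending only on $n$ (and on $\varphi$, $F$), and $c_4$ chosen last and large — which is what keeps the coefficient of $\bigl(\int_U\varphi(h)^2 L(r)\bigr)^{1/2}$ independent of $c_4$, exactly as in the proof of Lemma~\ref{mainlemma}.
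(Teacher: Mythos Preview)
Your proposal is correct and follows essentially the same route as the paper: weighted Poincar\'e via Laplace comparison, Minkowski splitting of $|\nabla h|$, the minimal-graph Caccioppoli inequality \eqref{cacciomin} with $\Psi=\psi$ and $\eta=w$, passage between $\varphi',\psi,\psi'$ via \eqref{(2.18)}--\eqref{(2.19)}, absorption using $n>2\sqrt2$, and the final Young-function step. The only cosmetic difference is that you absorb the $|\nabla\log W|$ term simultaneously with the $|\nabla w|$ term (producing the coefficient $2\sqrt2\,(1+\delta)^2\sqrt{1+\tau/2}$), whereas the paper first moves only the $|\nabla w|$ contribution to the left (obtaining $(n-\sqrt8(1+\delta)^2)$ there) and then separately bounds $\int_U\varphi(h)^2|\nabla\log W|^2 w^2\le\delta\int_U\varphi(h)^2 L(r)+C_\delta$ before the Young step; the arithmetic and the resulting dimension restriction $n\ge3$ are identical.
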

\begin{proof}
As in the proof of Lemma~\ref{mainlemma} we denote $h=|u-\theta|/\nu$, where $\nu\ge\nu_0$ will be fixed 
later. Recall from \eqref{eqintlem2} with $p=2$ that
\[
n \left(\int_U \varphi(h)^2 L(r) \right)^{1/2}
\leq 2 \left( \int_U |\nabla h|^2 \varphi^\prime (h)^2 w^2
\right)^{1/2}.
\]
We estimate the right-side as
\begin{align}\label{(4.10)}
&\quad\ 2 \left( \int_U |\nabla h|^2 \varphi^\prime (h)^2 w^2 \right)^{1/2}\\
&\le  2/\nu\left(\int_U 
\bigl(\varphi^\prime (h)|\nabla u| w +\varphi^\prime (h)|\nabla\theta|w\bigr)^2
\right)^{1/2}\nonumber\\
&\le 2/\nu\left(\int_U\varphi^\prime (h)^2|\nabla u|^2 w^2\right)^{1/2}+
2/\nu \left(\int_U\varphi^\prime (h)^2|\nabla\theta|^2 w^2\right)^{1/2}.\nonumber
\end{align}
Let $\delta\in (0,1/1000)$ and suppose that $\nu$ is so 
large that $\norm{h}_{\infty}\le t_{\delta}$, where $t_{\delta}>0$ is a constant such that 
\eqref{(2.18)} and \eqref{(2.19)} hold for all $t\in (0,t_{\delta}]$ with $p=2$.
Then by the Caccioppoli inequality~\eqref{cacciomin}, \eqref{(2.18)}, and \eqref{(2.19)}, 
the first term on the right-side of \eqref{(4.10)} can be estimated from above as
\begin{align*}
&\quad\ 2/\nu\left(\int_U\varphi^\prime (h)^2|\nabla u|^2 w^2\right)^{1/2} \le 
 \sqrt{2}(1+\delta)/\nu\left(\int_U\psi^\prime (h)|\nabla u|^2 w^2\right)^{1/2}\\
 &\le \sqrt{2}(1+\delta)/\nu\biggl(4\int_U \psi^\prime(h)|\nabla\theta|^2 w^2 
+8\nu^2\int_U \frac{\psi^2}{\psi^\prime}(h)|\nabla w|^2\\
&\quad +4\nu^2\int_U \frac{\psi^2}{\psi^\prime}(h)|\nabla\log W|^2 w^2\biggr)^{1/2}\\
&\le \sqrt{2}(1+\delta)/\nu\biggl(4\int_U \psi^\prime(h)|\nabla\theta|^2 w^2 
+4\nu^2(1+\delta)^2\int_U\varphi(h)^2|\nabla w|^2\\
&\quad +2\nu^2(1+\delta)^2\int_U \varphi(h)^2|\nabla\log W|^2 w^2\biggr)^{1/2}\\
&\le \sqrt{2}(1+\delta)/\nu\biggl(16\int_U\varphi^\prime(h)^2|\nabla\theta|^2 w^2
+4\nu^2(1+\delta)^2\int_U\varphi(h)^2|\nabla w|^2\\
&\quad +2\nu^2(1+\delta)^2\int_U \varphi(h)^2|\nabla\log W|^2 w^2\biggr)^{1/2}\\
&\le 4\sqrt{2}(1+\delta)/\nu\left(\int_U \varphi^\prime(h)^2 |\nabla\theta|^2 w^2\right)^{1/2}
+\sqrt{8}(1+\delta)^2\left(\int_U\varphi(h)^2|\nabla w|^2\right)^{1/2}\\
&\quad +2(1+\delta)^2\left(\int_U \varphi(h)^2|\nabla\log W|^2 w^2\right)^{1/2}.
\end{align*}
Taking into account the upper bounds \eqref{eq-gradw-est} and \eqref{eq-gradw-est2} for
$|\nabla w|$ we obtain 
\[
\int_U\varphi(h)^2|\nabla w|^2\le c + \int_U\varphi(h)^2L(r),
\]
and therefore
\begin{align}\label{(4.11)}
&\quad\ (n-\sqrt{8}(1+\delta)^2) \left(\int_U \varphi(h)^2 L(r) \right)^{1/2}\\
&\le \frac{4\sqrt{2}(1+\delta)+2}{\nu}\left(\int_U\varphi^\prime(h)^2|\nabla\theta|^2 w^2\right)^{1/2} \nonumber\\
&\quad +2(1+\delta)^2\left(\int_U \varphi(h)^2|\nabla\log W|^2 w^2\right)^{1/2} +C. \nonumber
\end{align}
Next we apply the complementary Young functions $F(\sqrt{\cdot})$ and $G(\sqrt{\cdot})^2$ as in the proof 
of Lemma~\ref{mainlemma} to estimate the first term on the right-side of \eqref{(4.11)}
\begin{align*}
\int_U\varphi^\prime(h)^2|\nabla\theta|^2 w^2
&= \int_U \varphi^\prime(h)^2 L(r)\left(\frac{|\nabla\theta|r\log(1+r)}{L(r)}\right)^2\\
&\le k\int_U G\bigl(\varphi^\prime (h)\bigr)^2L(r)+k\int_U F\left(\frac{|\nabla\theta|r\log(1+r)}{\sqrt{k}L(r)}\right)L(r)\\
&=k\int_U \varphi(h)^2 L(r) +k\int_U F\left(\frac{|\nabla\theta|r\log(1+r)}{\sqrt{k}L(r)}\right)L(r)
\end{align*}
for all $k>0$. By the assumption $|\nabla\log W|=o(1/r)$ we may estimate the second term on the 
right-side of \eqref{(4.11)} as
\begin{align*}
\int_U \varphi(h)^2|\nabla\log W|^2 w^2 & =\int_U\varphi(h)^2 L(r)
\left(\frac{|\nabla\log W|r\log(1+r)}{\log(1+r)+\cC(r)}\right)^2\\
&\le \delta\int_U\varphi(h)^2 L(r) +C_\delta.
\end{align*}
Choosing $k>0$ small enough and $c_4=\nu$ large enough we finally obtain \eqref{2ndphiFest}.
\end{proof}

\subsection{Solving the asymptotic Dirichlet problem with Lipschitz boundary values}
Since the asymptotic boundary $\partial_\infty M$ is homeomorphic to the unit sphere $\Sph^{n-1}\subset T_o M$, 
we may interpret the given boundary value function $f\in\linebreak C(\partial_\infty M)$ as a continuous function on 
$\Sph^{n-1}$.
In this section we solve the asymptotic Dirichlet problem for \eqref{Mequ} with Lipschitz continuous boundary values
$f\in C(\Sph^{n-1})$. First we construct an extension of $f$ as in \cite{Ho}. We assume that, for all $x\in M$ and for all $2$-planes $P\subset T_x M$,
\begin{equation}\label{a-b-curv}
-b^2\bigl(r(x)\bigr)\le K(P) \le -a^2\bigl(r(x)\bigr),
\end{equation}
where $a,b\colon [0,\infty)\to [0,\infty)$ are smooth functions that are constant in some neighborhood of $0$ 
and
\begin{align*}
a^2(t)&=\frac{1+\varepsilon}{t^{2}\log t},\\
b^2(t)&=\frac{\left(\log t\right)^{2\bar{\varepsilon}}}{t^2}
\end{align*}
for $t\ge R_0$. We identify $\partial_\infty M$ with the unit sphere 
$\Sph^{n-1}\subset T_o M$ and assume that $f\colon \Sph^{n-1}\to \R$ is $L$-Lipschitz. 
We extend $f$ radially to a 
continuous function $\tilde{\theta}$ on $M\setminus \{o\}$. The Lipschitz continuity of $f$ and the curvature upper bound 
imply that
\[
\osc (\tilde{\theta}, B(x,3))\leq \dfrac{cL}{f_a (r(x))}
\]
for $x\in M\setminus\{o\}$, where $f_a$ is the solution to the Jacobi equation \eqref{Jacobi_eq}. Next we will define a smooth function $\theta$ on $M$ such that
\begin{equation}\label{limit-f}
\lim_{x\to \xi} \theta(x)=f(\xi),
\end{equation}
for every $\xi \in \partial_\infty M$ and that  first and second order derivatives of $\theta$ are controlled. 
In order to construct $\theta$, we first fix a maximal $1$-separated set 
$Q=\{q_1,q_2,\ldots\}\subset M\setminus\{o\}$. 
For each $x\in M$, we write $Q_x=Q\cap B(x,3)$. The curvature lower bound implies that
\[
\card Q_x\leq c
\]
for some constant $c$ independent of $x$. We then define $\theta$ as
\begin{equation}\label{def-sm-ext}
\theta(x)=\sum_{q_i \in Q} \tilde{\theta}(q_i)\varphi_i (x),
\end{equation}
where $\{\varphi_i\}$ is a partition of the unity subordinate to $\{B(q_i, 3)\}$ defined as follows. First fix a 
$C^{\infty}$-function $\zeta\colon [0,\infty)\to [0,1]$ such that $\zeta|[0,1]=1,\ \zeta|[2,\infty[=0$, and 
\[
\max\{|\zeta'(t)|,|\zeta''(t)|\}\le c\chi_{[1,2]}(t).
\] 
For $q_i\in Q$ and $x\in M$, let $\eta_i(x)=\zeta\bigl(d(x,q_i)\bigr)$ and finally define
\[
\varphi_i(x)=\frac{\eta_i (x)}{\sum_j \eta_j(x)}.
\]
Following \cite{Ho}, one can easily check that $\theta$ satisfies all the required properties. 
Moreover, the gradient of $\theta$ satisfies
\begin{equation}\label{grad-est-ext}
|\nabla \theta|(x)\leq \dfrac{cL}{f_a (r(x))},
\end{equation}
for all $r(x)\geq 1$.

The next lemma is devoted to proving the decay assumption on\linebreak $\nabla\log W(x)$ used above in
Lemma~\ref{2ndmainlemma}. We will use ideas from \cite[Section 4]{DJX} by Ding, Jost, and Xin. We are grateful to 
J. Spruck for his help to obtain the decay estimate. 
\begin{lemma}\label{W-decay}
Let $M$ be a Cartan-Hadamard manifold satisfying the curvature assumption \eqref{a-b-curv} for all $2$-planes 
$P\subset T_{x}M$. Suppose that $\theta\in C(\bar{M})\cap C^\infty(M)$ is an extension of a Lipschitz function
$f\in C(\partial_\infty M)$ as in \eqref{def-sm-ext}. Let $\Omega=B(o,S)$ and let 
$u\in C^\infty(\Omega)\cap C(\bar{\Omega})$ be the unique solution of \eqref{Mequ} in $\Omega$ with 
$u|\partial\Omega=\theta|\partial\Omega$. Then there exists a continuous function 
$\cW\colon [0,\infty)\to [0,\infty)$ that is independent of $S$
such that $\cW(r)=o(1/r)$ as $r\to\infty$ and
\begin{equation}\label{W-decay-est}
|\nabla\log W(x)|\le\cW\bigl(r(x)\bigr)
\end{equation}
for $x\in \Omega$.
\end{lemma}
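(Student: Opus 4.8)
The plan is to combine the classical Jacobi equation for the angle function of a minimal graph with $S$-independent a priori $C^1$ and $C^2$ estimates, and then a rescaling argument that turns the decay of the ambient curvature into the rate $o(1/r)$; the overall scheme follows \cite[Section~4]{DJX}. First I would regard $\Sigma=\{(x,u(x)):x\in\Omega\}$ as a minimal hypersurface of $M\times\R$, with unit normal $\bar N$ and angle function $\Theta=\langle\bar N,\partial_t\rangle=1/W>0$. Since $\partial_t$ is a parallel Killing field and $\Sigma$ is minimal, $\Theta$ satisfies the Jacobi equation $\Delta_\Sigma\Theta+\bigl(|A|^2+\overline{\ric}(\bar N,\bar N)\bigr)\Theta=0$, where $A$ is the second fundamental form of $\Sigma$ and $\overline{\ric}$ the Ricci tensor of $M\times\R$. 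Because $M\times\R$ splits off a flat factor, $\overline{\ric}(\bar N,\bar N)$ equals $\ric_M$ evaluated on the $M$-component of $\bar N$, hence $\ge-(n-1)b\bigl(r(x)\bigr)^2$ at a point of $\Sigma$ over $x$; rewriting the Jacobi equation for $W=1/\Theta$ and for $\log W$ then gives
\[
\Delta_\Sigma W\ge-(n-1)b(r)^2\,W,\qquad
\Delta_\Sigma\log W\ge|\nabla_\Sigma\log W|^2-(n-1)b(r)^2 .
\]
Since $|\nabla_M\log W|\le W\,|\nabla_\Sigma\log W|$ pointwise, it suffices to bound the tangential gradient of $\log W$ on $\Sigma$.

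Next I would establish a priori bounds with constants independent of $S$. The maximum principle gives $\inf_M\theta\le u\le\sup_M\theta$, hence $\osc_\Omega u\le\osc f$; combining the interior gradient estimate for minimal graphs on balls of a fixed radius with the usual logarithmic barriers at $\partial\Omega=\partial B(o,S)$ (whose mean curvature is controlled by the curvature assumption \eqref{a-b-curv}, and on which $u=\theta$ with $|\nabla\theta|\le cL/f_a$) should yield $1\le W\le W_0$ on $\bar\Omega$, with $W_0$ depending only on $n$, $a$, $b$, $L$, $\osc f$ and not on $S$. Once $|\nabla u|$ is bounded the equation is uniformly elliptic with $S$-independent structure, so interior Schauder theory provides $C^{2,\alpha}$ bounds for $u$ on unit balls; in particular $|A|$ and the intrinsic geometry of $\Sigma$ are locally bounded, again $S$-independently. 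A separate and, I expect, delicate point is to show $W(x)\to1$ as $r(x)\to\infty$: I would obtain this from a radial comparison argument producing $|u-\theta|(x)\le\Phi\bigl(r(x)\bigr)$ for a function $\Phi\searrow0$ built from the curvature upper bound and the decay $|\nabla\theta|\le cL/f_a$, after which $\osc\bigl(u,B(x,\rho)\bigr)\to0$ for $\rho=o(r)$ and the linear interior estimate for uniformly elliptic equations forces $|\nabla u|(x)\to0$.

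For the rate itself I would argue by rescaling. Fix $x_0$ with $r=r(x_0)$ large; points close to $\partial\Omega$ are covered by the boundary $C^2$ estimates, which there already give $|\nabla\log W|\lesssim\bigl(L/f_a(S)\bigr)^2=o(1/S)$, so assume $B(x_0,\rho)\subset\Omega$ with $\rho=\rho(r)\to\infty$, $\rho=o\bigl(r/(\log r)^{2\bar\varepsilon}\bigr)$. Rescaling the metric of $M$ on $B(x_0,\rho)$ by $\rho^{-2}$, the graph of $u/\rho$ is minimal for the new metric, $W$ is scale invariant and still $\le W_0$, the oscillation of $\log W$ over the now-unit ball is at most $\varepsilon(r):=\sup_{B(x_0,\rho)}\log W\to0$, and the rescaled ambient sectional curvature is $O\bigl(\rho^2b(r)^2\bigr)=o(1)$. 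The Ding--Jost--Xin interior gradient estimate for the angle function on this nearly flat, unit-size minimal graph should then give, on the concentric half-ball, $|\tilde\nabla\log W|\le C\bigl(\varepsilon(r)+\rho^2b(r)^2\bigr)$ with $C$ absolute; scaling back,
\[
|\nabla\log W(x_0)|\le\frac{C}{\rho(r)}\bigl(\varepsilon(r)+\rho(r)^2b(r)^2\bigr)=C\Bigl(\frac{\varepsilon(r)}{\rho(r)}+\rho(r)\,b(r)^2\Bigr).
\]
With, for instance, $\rho(r)=r/(\log r)^{2\bar\varepsilon+1}$ one gets $\rho(r)b(r)^2=1/(r\log r)$ and $\varepsilon(r)/\rho(r)=\varepsilon(r)(\log r)^{2\bar\varepsilon+1}/r$, both $o(1/r)$ because $\varepsilon(r)\to0$; taking $\cW$ to be this majorant for large $r$ and any continuous majorant of the locally bounded quantity from the previous step for small $r$ yields a continuous, $S$-independent $\cW$ with $\cW(r)=o(1/r)$, as required.

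The main obstacle is twofold. First, the quantitative interior gradient estimate for $\log W$ on a minimal graph must have exactly the right dependence — linear in the oscillation of $\log W$ and in the ambient curvature — and this is the genuinely technical Ding--Jost--Xin / Spruck input, resting on Bochner/Simons-type inequalities on $\Sigma$ and the locally bounded second fundamental form. Second, the barrier argument yielding $W\to1$ is where the curvature upper bound in \eqref{a-b-curv} is really used: it forces $f_a$ to grow fast enough for $|\nabla\theta|$, hence $\osc u$, to decay. Everything else is bookkeeping designed to keep all constants independent of $S$, which is why one works on balls $B(x,\rho)$ with $\rho$ a suitable function of $r(x)$ well inside $\Omega$ and defers to boundary barriers near $\partial\Omega$.
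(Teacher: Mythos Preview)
Your approach is genuinely different from the paper's, and the main gap is in your step~3. You need $W\to 1$ (equivalently $|\nabla u|\to 0$) as $r\to\infty$, and in fact you need a quantitative rate: with your choice $\rho(r)=r/(\log r)^{2\bar\varepsilon+1}$ the term $\varepsilon(r)/\rho(r)=\varepsilon(r)(\log r)^{2\bar\varepsilon+1}/r$ is $o(1/r)$ only if $\varepsilon(r)=o\bigl((\log r)^{-2\bar\varepsilon-1}\bigr)$, not merely $\varepsilon(r)\to 0$. You propose to obtain this via a radial comparison argument yielding $|u-\theta|(x)\le\Phi\bigl(r(x)\bigr)\searrow 0$. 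But under the weak curvature upper bound $K\le -(1+\varepsilon)/(r^2\log r)$, no such barrier is available---direct barrier constructions for the minimal graph equation (as in \cite{CHR}) require stronger curvature bounds, and the entire point of the present paper is to replace barriers by the integral machinery of Lemmas~\ref{newlem-mincaccio}--\ref{2ndmainlemma}, which in turn \emph{consume} Lemma~\ref{W-decay} as an input. So invoking $|u-\theta|\to 0$ here is circular.

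The paper avoids this circularity by never using $W\to 1$. It works in harmonic coordinates on a ball $B(p,\theta_1\rho)$ with $\rho=\bigl(R/(\log R)^{\bar\varepsilon}\bigr)^{2/3}$, whose existence with controlled metric coefficients \eqref{harmcoor} follows from Yang's theorem \cite{DY} because the rescaled curvature norms are small. In these coordinates the differentiated equation $L(\partial_k u)+\partial_j f_k^j=0$ has $|f_k^j|\le C/\tilde\rho$ with $\tilde\rho=R/\bigl((\log R)^{\bar\varepsilon}\rho\bigr)\to\infty$; the De~Giorgi--Nash--Moser weak Harnack inequality then gives $\|\nabla u\|_{C^\alpha}\le C\tilde\rho^{-\alpha}$, after which the scaling-invariant Schauder estimate yields $|D^2 u|\le C\rho^{-2}\sup|u|$. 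The only input is $\sup|u|\le\max|f|$, which is free from the maximum principle and $S$-independent. The choice of $\rho$ makes $\rho^{-2}=\bigl((\log R)^{\bar\varepsilon}/R\bigr)^{4/3}=o(1/R)$, and since $|\nabla\log W|$ is controlled by $|\nabla u|\cdot|D^2 u|$, the claim follows. Your Jacobi-equation setup is correct but, as you note yourself, the needed gradient estimate for $\log W$ with linear dependence on $\osc(\log W)$ is exactly the delicate point; the paper's route via harmonic coordinates and Schauder theory on the base sidesteps it entirely by trading $\osc(\log W)$ for $\sup|u|$.
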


\begin{proof}
Since sectional curvatures are bounded from below by a negative constant and $|u|\le \max_{\partial_\infty M}|f|$, 
we have
\[
\max_{\bar{\Omega}}|\nabla u|\le C,
\]
with $C$ independent of the radius $S$. This estimate is obtained by using classical logarithmic type barriers to obtain boundary gradient estimates and then applying \cite[Lemma 3.1]{RSS}. 
In local coordinates $x=(x^1,\ldots, x^n)$ the minimal graph equation can be 
written as
\[
\partial_j \left(\sqrt{\sigma} \frac{\sigma^{ij}u_i}{\sqrt{1+|\nabla u|^2}} \right)=0,
\]
where $\{\partial_j\}$ is the associated coordinate frame, $\sigma_{ij}dx^idx^j$ is the Riemannian metric, 
$\sigma=\det(\sigma_{ij})$, and $(\sigma^{ij})=(\sigma_{ij})^{-1}$.
Differentiating the equation in the direction $\partial_k$ and setting $w=\partial_k u$, we see that $w$ satisfies
\[
L(w)+\partial_j f_k^j=0,
\]
where $L$ is defined by
\[
L(w)= \partial_j \left(\frac{\sigma}{\sqrt{1+|\nabla u|^2}} \left(\sigma^{ij} 
- \frac{u^i u^j}{1+|\nabla u|^2}\right) w_i \right),
\]
with $u^{i}=\sigma^{ij}u_j=\sigma^{ij}\partial_j u$, 
and 
\[
f_k^j= \frac{u_i}{\sqrt{1+|\nabla u|^2}} \partial_k (\sqrt{\sigma} \sigma^{ij}) - \frac{1}{2} \sqrt{\sigma} \sigma^{ij} \frac{u_i u_p u_q}{(1+|\nabla u|^2)^{\frac{3}{2}}} \partial_k \sigma^{pq}.
\]
Fix $p\in M$ and denote $R=d(o,p)$ and 
\[
\rho=\rho(R)=\left(\frac{R}{(\log R)^{\bar{\varepsilon}}}\right)^{2/3}
\]
so that
\[
\tilde{\rho}:=\frac{R}{(\log R)^{\bar{\varepsilon}}\rho}\to\infty
\]
as $R\to\infty$. 
We claim that there are positive constants $\alpha^\prime$, $\theta_1 \in (0,1)$ and $C$ such that there exist  
harmonic coordinates $(x^1,\ldots,x^n)$ on $B(p,\theta_1\rho)$ satisfying 
\begin{align}
\label{harmcoor}
(\sigma_{ij}) &\ge 1/C,\\ 
\sigma_{ij} +\frac{R}{(\log R)^{\bar{\varepsilon}}\rho} |\nabla \sigma_{ij}| &+ 
\left(\frac{(\log R)^{\bar{\varepsilon}}\rho}{R}\right)^{-1-\alpha^\prime} 
\left[ \nabla \sigma_{ij}\right]_{\alpha^\prime, B(p, \theta\rho) } \leq C,\nonumber
\end{align}
where 
%$0<\alpha^\prime <1$ and
\[
\left[\varphi \right]_{\alpha^\prime, B(p,\theta_1\rho)}= 
\sup_{ \stackrel{ \mbox{\scriptsize $x,y\in B(p,\theta_1\rho)$}}{x\ne y}}
\frac{|\varphi (x) - \varphi (y)|}{d(x,y)^{\alpha^\prime}}.
\]
Since we are interested in the asymptotic behavior of $\nabla\log W$ we may assume without loss of 
generality that 
$R$ is so large that $R-\rho\ge R/2\ge R_0$. Hence we have
\[
|\riem|\le \frac{c(\log R)^{2\bar{\varepsilon}}}{R^2}
\]  
for all sectional curvatures in $B(p,\rho)$. By the standard volume comparison theorem we obtain
\[
\vol\bigl(B(p,\rho)\bigr)\leq C \rho^n e^{\frac{(n-1)(\log R)^{\bar{\varepsilon}}\rho}{R}}.
\]
It follows that 
\[
\left\|\riem \right\|_{L^{n/2}(B(p,\rho))}\leq C \left(\frac{(\log R)^{\bar{\varepsilon}}\rho}{R} \right)^2,
\]
and, for $q>n$,
\[
\rho^{2-2n/q} \left\|\ric \right\|_{L^{q/2}(B(p,\rho))}
\leq C \left(\frac{(\log R)^{\bar{\varepsilon}}\rho}{R}\right)^2. 
\]
Then, using these last two estimates, \cite[Theorem 7.1]{DY} applies and gives the existence of the harmonic 
coordinates described above. Using this system of coordinates, we will prove that $\nabla u$ is uniformly H\"older.

Without loss of generality, we may assume that $S$, the radius of $\Omega$, is greater than $2R$.
Let $s\leq \theta_1 \rho/4$ and recall that 
\[
\tilde{\rho}=\frac{R}{(\log R)^{\bar{\varepsilon}}\rho}.
\] 
We define $M_4(s)= \sup_{B(p,4s)} w$, $m_4(s)= \inf_{B(p,4s)} w$, 
$M_1(s)= \sup_{B(p,s)} w$, and $m_1(s)= \inf_{B(p,s)} w$. 
%Since $|\nabla u|$ is uniformly bounded, it is easy to see 
Using \eqref{harmcoor} and the well-known formula for the derivative of the determinant it is easily seen that 
\[
|f_k^j|\leq \frac{C }{\tilde{\rho}}
\] 
on $B(p,\theta_1 \rho)$. Next applying the weak Harnack inequality \cite[Theorem 8.18]{GT}, we have
\begin{equation}
\label{weakharn1}
\frac{1}{s^n} \int_{B(p,2s)} (M_4 (s) - w) \leq C \left(M_4 (s) - M_1 (s)+  s/\tilde{\rho}\right),
\end{equation}
and
\begin{equation}
\label{weakharn2}
\frac{1}{s^n} \int_{B(p,2s)} (w- m_4 (s) ) \leq C \left(m_1 (s) - m_4 (s)+  s/\tilde{\rho}\right).
\end{equation}
Denote $w(s)= M_1(s) -m_1(s)$. Since $\vol \bigl(B(p,2s)\bigr)\geq C_1 s^n$, for some constant $C_1$, using \eqref{weakharn1} 
and \eqref{weakharn2}, we have
\[
C_1 w(4s) \leq \frac{\vol \bigl(B(p,2s))\bigr)}{s^n} w(4s)\leq C (w (4s) - w(s) + 2s/\tilde{\rho}). 
\]
This implies that there exists $\gamma \in (0,1)$ such that, for all $s\in [0, \theta_1 \rho/4]$,
\[
w(s) \leq \gamma w(4s) + 2s/\tilde{\rho}. 
\]
Using \cite[Lemma 8.23]{GT} (notice that $\tilde{\rho}\to\infty$ as $R\to\infty$), we get that there exist 
$\alpha \in (0,1)$ and a positive constant $C$ such that
\[
\|\nabla u \|_{C^\alpha (B(p,\theta_1 \rho))}\leq C \tilde{\rho}^{-\alpha}.
\]
Then the scaling invariant Schauder estimates imply that there exists a constant $C$ depending on $\alpha $ such 
that we have
\begin{equation}
\label{scalinvsch}
\sup_{B(p,\theta_1 \rho/2)} |D^i u|\leq C \rho^{-i} \sup_{B(p,\theta_1 \rho)} |u|,\ \text{for}\ i=1,2.
\end{equation}
Since $\sup_{B(p,\theta_1 \rho)} |u| \leq \max_{\partial_\infty M}|f|$ and
\[
|\nabla\log W|=\frac{|\nabla\langle\nabla u,\nabla u\rangle|}{2\sqrt{1+|\nabla u|^2}}\le 
|\nabla\langle\nabla u,\nabla u\rangle|,
\] the claim \eqref{W-decay-est} follows immediatelly from 
\eqref{scalinvsch} by our choice of
\[
\rho=\left(\frac{R}{(\log R)^{\bar{\varepsilon}}}\right)^{2/3}.\vspace{-2em}
\]
\end{proof}
We are now ready to solve the asymptotic Dirichlet problem with Lipschitz boundary values.
\begin{lemma}\label{lem-sol-lip}
Let $M$ be a Cartan-Hadamard manifold of dimension $n\ge 3$ satisfying the curvature assumption
\eqref{curv_ass_minim2} for all $2$-planes $P\subset T_{x}M$, with $x\in M\setminus B(o,R_0)$. 
Suppose that $f\in C\bigl(\partial_\infty M\bigr)$ is $L$-Lipschitz when interpreted as a function on 
$\Sph^{n-1}\subset T_o M$.
Then the asymptotic Dirichlet problem for the minimal graph equation \eqref{Mequ} is
uniquely solvable with boundary values $f$.
\end{lemma}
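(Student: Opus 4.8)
The plan is to follow the strategy of Section~\ref{sec_asdir} (the proof of Theorem~\ref{thm-A-regu}), with the truncated cone replaced by an exhaustion by geodesic balls and the $\cA$-harmonic ingredients replaced by their minimal-graph analogues from Lemmas~\ref{newlem-mincaccio}, \ref{moseritemin}, \ref{2ndmainlemma}, and \ref{W-decay}. First I would extend $f$ to the smooth function $\theta$ on $M$ given by \eqref{def-sm-ext}, which by construction satisfies $\lim_{x\to\xi}\theta(x)=f(\xi)$ for every $\xi\in\partial_\infty M$ together with the gradient bound \eqref{grad-est-ext}; in particular $\theta$ is smooth near $o$ and $\sup_M(|\theta|+|\nabla\theta|)<\infty$. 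Take $\Omega_j=B(o,R_1+j)$. Since geodesic spheres in a Cartan--Hadamard manifold are convex, $\partial\Omega_j$ has the nonnegative mean curvature (with respect to the inward normal) required by Lemma~\ref{Mequ-sol-exist}, so there is a unique $u_j\in C^\infty(\Omega_j)\cap C^{2,\alpha}(\bar\Omega_j)$ solving \eqref{Mequ} in $\Omega_j$ with $u_j|\partial\Omega_j=\theta|\partial\Omega_j$. Because constants solve \eqref{Mequ}, the comparison principle (\cite[Lemma 1]{CHR}) gives $\|u_j\|_\infty\le\max_{\partial_\infty M}|f|$ uniformly in $j$, and Lemma~\ref{W-decay} gives $|\nabla\log W_j(x)|\le\cW(r(x))$ with a continuous $\cW$, $\cW(r)=o(1/r)$, independent of $j$. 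Fix $c_0$ so large (depending only on $\varphi$, $\sup_M(|\theta|+|\nabla\theta|)$, and $\sup_{[0,\infty)}\cW$) that both Lemma~\ref{2ndmainlemma} and Lemma~\ref{moseritemin} apply with $\nu=c_0$.

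Applying Lemma~\ref{2ndmainlemma} with $U=\Omega_j$ gives
\[
\int_{\Omega_j}\varphi\bigl(|u_j-\theta|/c_0\bigr)^2L(r)\le c_0+c_0\int_{\Omega_j}F\Bigl(\tfrac{c_0|\nabla\theta|r\log(1+r)}{L(r)}\Bigr)L(r),
\]
and the next task is to bound the right-hand side by a constant $C_*$ independent of $j$. For this I would repeat the computation at the end of the proof of Theorem~\ref{thm-A-regu}: using $|\nabla\theta|\le cL/f_a(r)$, the Jacobian bound $\lambda(r,v)\le f_b(r)^{n-1}$ from Rauch comparison, the explicit upper bound \eqref{eq-F-upper} for $F$, and the auxiliary function $\Phi$ with $\Phi(t)\ge cf_b(t)$, one finds that the integrand together with the volume element is $\le c\bigl(\log(1+r)+\cC(r)\bigr)r^{-2}$ outside a compact set, hence integrable in $r$; the contribution of $B(o,1)$ is bounded because there $|\nabla\theta|$, the argument of $F$, $L$, and $\lambda$ are all bounded. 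Fatou's lemma, after extending $u_j$ by $\theta$ outside $\Omega_j$ and once we know $u_j\to u$, then yields $\int_M\varphi(|u-\theta|/c_0)^2L(r)<\infty$.

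For the passage to the limit I would invoke interior gradient estimates for the minimal graph equation (\cite[Lemma 3.1]{RSS}, as in the proof of Lemma~\ref{Mequ-sol-exist}): since $\|u_j\|_\infty$ is uniformly bounded, $(u_j)$ has locally uniformly bounded gradients, hence is bounded in $C^{2,\alpha}_{\loc}(M)$ by Schauder theory, so a subsequence converges in $C^2_{\loc}$ to a solution $u\in C^\infty(M)$ of \eqref{Mequ} with $\|u\|_\infty\le\max_{\partial_\infty M}|f|$. It remains to show $\lim_{x\to\xi}u(x)=f(\xi)$ for every $\xi\in\partial_\infty M$; since $\theta(x)\to f(\xi)$, it suffices to prove $|u(x)-\theta(x)|\to 0$ as $x\to\xi$. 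Fix $s\in(0,r_S)$. Applying Lemma~\ref{moseritemin} to $u_j$ on a ball $B(x,s)\subset\Omega_j$ and letting $j\to\infty$ (using locally uniform convergence and dominated convergence exactly as in \eqref{(3.19II)}) gives
\[
\sup_{B(x,s/2)}\varphi\bigl(|u-\theta|/c_0\bigr)^{2(n+1)}\le c_3\int_{B(x,s)}\varphi\bigl(|u-\theta|/c_0\bigr)^2.
\]
If $x$ is far enough from $o$ that $L(r)\ge 1$ on $B(x,s)$, the right-hand side is $\le c_3\int_{B(x,s)}\varphi(|u-\theta|/c_0)^2L(r)$, which tends to $0$ as $x\to\xi$ since $B(x,s)$ escapes every compact set while $\int_M\varphi(|u-\theta|/c_0)^2L(r)<\infty$. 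Hence $\varphi(|u(x)-\theta(x)|/c_0)\to 0$, so $|u(x)-\theta(x)|\to 0$ and $u(x)\to f(\xi)$, proving $u\in C(\bar M)$ with $u|\partial_\infty M=f$. Uniqueness follows from the comparison principle for \eqref{Mequ}.

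The main obstacle is the one flagged in the introduction to this section: one cannot in general solve \eqref{Mequ} on the truncated cones used in Section~\ref{sec_asdir}, which forces the exhaustion to be by the balls $B(o,R_j)$ — whose boundaries do have the mean-curvature sign needed by Lemma~\ref{Mequ-sol-exist} — and hence forces $\theta$ to be defined and have controlled derivatives on all of $M$, and forces the uniform decay estimate for $|\nabla\log W_j|$ of Lemma~\ref{W-decay}. The secondary point requiring care is keeping $c_0$ uniform along the exhaustion, which is possible precisely because $\|u_j\|_\infty$, $\sup_M(|\theta|+|\nabla\theta|)$, and $\sup_{[0,\infty)}\cW$ are all finite and independent of $j$.
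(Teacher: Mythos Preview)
Your proposal is correct and follows essentially the same approach as the paper's proof: extend $f$ to the smooth $\theta$ of \eqref{def-sm-ext}, exhaust $M$ by geodesic balls, solve \eqref{Mequ} on each ball, use Lemma~\ref{W-decay} to control $|\nabla\log W_j|$ uniformly, apply Lemma~\ref{2ndmainlemma} together with the integrability computation from the end of the proof of Theorem~\ref{thm-A-regu} and Fatou's lemma, pass to the limit via interior gradient estimates, and conclude via Lemma~\ref{moseritemin}. You are in fact more explicit than the paper on several points (the mean-curvature sign of geodesic spheres for invoking Lemma~\ref{Mequ-sol-exist}, and the uniformity in $j$ of the constant $c_0$); the paper cites \cite[Theorem~1.1]{Spruck} rather than \cite[Lemma~3.1]{RSS} for the interior gradient estimate, but this is immaterial.
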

\begin{proof}
Let $\theta\in C(\bar{M})\cap C^{\infty}(M)$ be the extension of the given boundary data 
$f\in C(\partial_\infty M)$ defined as above. 
We exhaust $M$ by an increasing sequence of geodesic balls $B_k=B(o,k),\ k\in\N$.
Hence there exist smooth solutions $u_k\in C(\bar{B}_k)$ of the minimal graph equation  
\[
\begin{cases}
\diver \left(\dfrac{\nabla u_k}{\sqrt{1+|\nabla u_k|^2}}\right)=0, &\mbox{in }B_k,\\
u_k\vert\partial B_k=\theta. &
\end{cases}
\]
Then
\[
-\max_{x\in M}\abs{\theta(x)}\le u_k\le \max_{x\in M}\abs{\theta(x)}
\]
in $B_k$ by the comparison principle. 
Standard arguments involving interior gradient estimates \cite[Theorem 1.1]{Spruck} and (regularity) 
theory of elliptic PDEs imply that there exists a subsequence, still denoted by 
$u_k$, that converges in $C^{2}_{\loc}(M)$ to a solution $u\in C^\infty(M)$ of the minimal graph equation. Therefore the proof reduces to prove 
that $u$ extends continuously to $\partial_\infty M$ and satisfies $u\vert \partial_\infty M=f$. 
For each $k$, let $W_k=\sqrt{1+|\nabla u_k|^2}$. Then by Lemma~\ref{W-decay}, 
$|\nabla\log W_k(x)|\le\cW\bigl(r(x)\bigr)$, with $\cW(r)=o(1/r)$ as $r\to\infty$. Applying Lemma \ref{2ndmainlemma} 
and Fatou's lemma and taking into account \eqref{grad-theta-est} we obtain as in the proof of 
Theorem~\ref{thm-A-regu} that
\begin{align}
\int_M \varphi\left(|u-\theta|/\nu\right)^2
&\le\liminf_{k\to\infty}\int_{B(o,k)}\varphi\left(|u-\theta|/\nu\right)^2\\
&\le \nu + \nu \int_{M} F\left(\frac{\nu |\nabla \theta|r\log(1+r)}{L(r)}\right)L(r)<\infty.\nonumber
\end{align}
By Lemma~\ref{moseritemin}, we then get
\[
\lim_{x\to\xi}\sup_{B(x,s/2)}\varphi\left(|u-\theta|/\nu\right)^{2(n+1)}=0
\]
for every $\xi\in\partial_\infty M$. Hence $u$ extends continuously to $\partial_\infty M$ and satisfies $u\vert 
\partial_\infty M=f$.
\end{proof}
\subsection{Solving the Dirichlet problem with continuous boundary values}
\begin{proof}[Proof of Theorem \ref{thmmin}]
Let $f\in C(\partial_\infty M)$. Again we identify $\partial_\infty M$ with the unit sphere 
$\Sph^{n-1}\subset T_o M$. Let $(f_i)$ be a sequence of Lipschitz functions on $\Sph^{n-1}$ such that 
$f_i\to f$ uniformly on $\Sph^{n-1}$. By the previous Lemma \ref{lem-sol-lip} there exist solutions 
$u_i\in C(\bar M)\cap C^\infty (M)$ of the minimal graph equation \eqref{Mequ} with $u_i=f_i$ on 
$\partial_\infty M$. By the maximum principle ,
\[
\sup_{M}|u_i-u_j|=\max_{\partial_\infty M}|f_i -f_j|,
\]
and applying the interior gradient estimate \cite[Theorem 1.1]{Spruck}, we conclude that the 
sequence $(u_i)$ converges in $C(\bar M)\cap C^2_{\loc}(M)$ to a function 
$u\in C(\bar{M})$ that is also a solution to \eqref{Mequ} in $M$ and $u=f$ on $\partial_\infty M$. By regularity theory $u\in C^\infty (M)$. To prove the uniqueness, suppose that $u$ and $v$ are both 
solutions of \eqref{Mequ}, continuous in $\bar{M}$, with $u=v$ on $\partial_\infty M$, and $u(y)>v(y)$ for some 
$y\in M$. Let $\delta=\bigl(u(y)-v(y)\bigr)/2$ and let $U$ be the $y$-component of the set 
$\{x\in M\colon u(x)>v(x)+\delta\}$. Then $U$ is a relatively compact domain and $u=v+\delta$ on $\partial U$.
It follows that $u=v+\delta$ in $U$ which leads to a contradiction since $y\in U$.
\end{proof}

%\bibliography{referencesADP2}

\end{document}